\documentclass[12pt]{amsart}
\usepackage{times}
\usepackage{anysize}
\marginsize{2.7cm}{2.44cm}{2.0cm}{3.0cm}
\usepackage[small,it]{caption}
 \usepackage[latin1]{inputenc}
 \usepackage[dvips]{graphicx}
 \usepackage{wrapfig}
 \usepackage{amsmath}
 \usepackage{amsthm}
 \usepackage{amsfonts}
 \usepackage{amssymb}
 \usepackage{layout}
 \usepackage{verbatim}
 \usepackage{alltt}
\usepackage{yfonts}

\usepackage[all]{xy}
\usepackage{setspace}

\newtheorem*{thma}{Theorem A}
\newtheorem*{thmb}{Theorem B}
\newtheorem*{thmc}{Theorem C}
\newtheorem*{claim}{Claim}

\newcommand{\LL}{\Lambda}
\newcommand{\QQ}{\mathbb{Q}}
\newcommand{\FF}{\mathcal{F}}

\newcommand{\lra}{\longrightarrow}
\newcommand{\ZZ}{\mathbb{Z}}
\newcommand{\PP}{\mathcal{P}}
\newcommand{\Gal}{\textup{Gal}}
\newcommand{\KS}{\textbf{\textup{KS}}}
\newcommand{\ES}{\textbf{\textup{ES}}}
\newcommand{\NN}{\mathcal{N}}
\newcommand{\ra}{\rightarrow}
\newcommand{\xx}{\mathbf{X}}
\newcommand{\be}{\begin{equation}}
\newcommand{\ee}{\end{equation}}

\newcommand{\XX}{\mathcal{X}}
\newcommand{\kk}{\mathcal{K}}
\newcommand{\al}{\mathcal{L}}

\newcommand{\qq}{\textup{\frakfamily q}}

\newcommand{\ff}{\hbox{\frakfamily f}}
\newcommand{\all}{\mathbb{L}}
\newcommand{\FFc}{\mathcal{F}_{\textup{\lowercase{can}}}}
\newcommand{\Hom}{\textup{Hom}}
\newcommand{\cl}{\textup{cl}}
\newcommand{\hone}{$\mathbf{H.1}$}

\newcommand{\hthree}{$\mathbf{H.3}$}
\newcommand{\hfour}{$\mathbf{H.4}$}
\newcommand{\hsezF}{$\mathbf{\mathbb{H}.sEZ}_{/k}$}
\newcommand{\htamF}{$\mathbf{\mathbb{H}.T}_{/k}$}
\newcommand{\htam}{$\mathbf{\mathbb{H}.T}$}
\newcommand{\hsez}{$\mathbf{\mathbb{H}.sEZ}$}
\newcommand{\finite}{\textup{f}}
\newcommand{\unr}{\textup{unr}}

\numberwithin{equation}{section}
\newtheorem{thm}{Theorem}[section]
\newtheorem{lemma}[thm]{Lemma}
\newenvironment{define}{\par\medskip\noindent\refstepcounter{thm}
\bgroup{\hspace*{-0.15 cm}\bf{Definition}
\thethm.}\bgroup}{\egroup \egroup\par\medskip}\newtheorem{prop}[thm]{Proposition}
\newtheorem{cor}[thm]{Corollary}
\newenvironment{rem}{\par\medskip\noindent\refstepcounter{thm}
\bgroup{\hspace*{-0.15 cm}\bf{Remark} \thethm.}\bgroup}{\egroup
\egroup\par\medskip}
\newenvironment{rmkdef}{\par\medskip\noindent\refstepcounter{thm}
\bgroup{\hspace*{-0.15 cm}\bf{Remark/Definition} \thethm.}\bgroup}{\egroup
\egroup\par\medskip}
 \parskip 2pt

\newenvironment{example}{\par\medskip\noindent\refstepcounter{thm}
\bgroup{\hspace*{-0.15 cm}\bf{Example}
\thethm.}\bgroup}{\egroup \egroup\par\medskip}
\newenvironment{assume}{\par\medskip\noindent\refstepcounter{thm}
\bgroup{\hspace*{-0.15 cm}\bf{Assumption}
\thethm.}\bgroup}{\egroup \egroup\par\medskip}

\begin{document}
\title{{S}\lowercase{tickelberger elements and} {K}\lowercase{olyvagin systems}}

\author{K\^az\i m B\"uy\"ukboduk}

\address{IH\'ES, Le Bois-Marie, 35,  \hfill\break\indent Route de Chartres \hfill\break\indent F-91440 Bures-sur-Yvette
\hfill\break\indent France}

\curraddr{
\hfill\break\indent Ko\c{c} University, Mathematics \hfill\break\indent Rumeli Feneri Yolu \hfill\break\indent 34450 Sar\i yer / \.Istanbul
\hfill\break\indent Turkey}

 \keywords{Brumer's conjecture, Stickelberger elements, Euler systems, Kolyvagin systems, Iwasawa Theory.}

%\address{\textit{Current Address:} \hfill\break\indent
%Ih\'es, Le Bois-Marie, 35,  \hfill\break\indent Route de Chartres \hfill\break\indent F-91440 Bures-sur-Yvette
%\hfill\break\indent France}
%\keywords{Stark conjectures, Euler systems, Kolyvagin systems.}
\subjclass[2000]{11R23, 11R27, 11R29, 11R34, 11R42}

\begin{abstract}

In this paper, we construct (many) Kolyvagin systems out of
Stickelberger elements utilizing ideas borrowed from our previous
work on Kolyvagin systems of Rubin-Stark elements. The applications
of our approach are two-fold: First, \emph{assuming Brumer's
conjecture}, we prove results on the \emph{odd} parts of the ideal
class groups of CM fields which are abelian over a totally real
field, and deduce Iwasawa's main conjecture for totally real fields
(for totally odd characters). Although this portion of our results
have already been established by Wiles unconditionally (and refined
by Kurihara using an Euler system argument, when Wiles' work is
assumed), the approach here fits well in the general framework the
author has developed elsewhere to understand Euler / Kolyvagin
system machinery when the \emph{core Selmer rank} is $r>1$ (in  the
sense of Mazur and Rubin). As our second application, we establish a
rather curious link between the Stickelberger elements and
Rubin-Stark elements by using the main constructions of this article
hand in hand with the `rigidity' of the collection of Kolyvagin
systems proved by Mazur, Rubin and the author.

\end{abstract}

\maketitle
\tableofcontents
\section{Introduction}
\label{introduction}
The Euler / Kolyvagin system machinery is designed to bound the size of a Selmer group. In all well-known cases the bounds obtained relate to $L$-values, and thus provide a link between arithmetic and analytic data. Well-known prototypes for such a relation between arithmetic and analytic data are the Birch and Swinnerton-Dyer conjecture (more generally, Bloch-Kato conjectures) and the main conjectures of Iwasawa theory. The Kolyvagin system machinery has been successfully applied by  many to obtain deep results towards proving these conjectures.

B. Howard, B. Mazur and K. Rubin show in~\cite{mr02} that the
existence of Kolyvagin systems relies on a cohomological invariant,
what they call the \emph{core Selmer rank} (c.f., \cite[Definition
4.1.11]{mr02}). When the core Selmer rank is one, they determine the
structure of the Selmer group completely in terms of a Kolyvagin
system. However, when the core Selmer  rank is greater than one, not
much could be said. One of the principal objectives of the current
article is to explore this more mysterious case in depth for a
particular Galois representation, for which the core Selmer rank is
greater than 1.

%In fact, one does not expect a similar result for the structure of the Selmer group in general, as a reflection of the fact that Bloch-Kato conjectures do not in general predict the existence of special elements, but a regulator, to compute the relevant $L$-values.

Let $k$ be a totally real field of degree $[k:\QQ]=r$ and let
$G_k:=\Gal(\overline{k}/k)$ be its absolute Galois group. Fix once
and for all an odd rational prime $p$, and let
$\psi:G_k\ra\ZZ_p^\times$ be a totally even character which has
finite prime-to-$p$ order. Consider the $G_k$-representation
$T^\prime:=\ZZ_p(1)\otimes\psi^{-1}$. The core Selmer rank of the
Galois representation $T^\prime$ is $r$, and $T^\prime$ leads us to
one of the basic instances when the core Selmer rank is greater than
one. In~\cite{kbbstark, kbbiwasawa}, the author has studied the
Kolyvagin system machinery for $T^\prime$. The main idea in these
two papers is  to modify the relevant  Selmer group appropriately
and construct Kolyvagin systems out of Rubin-Stark elements defined
in~\cite{ru96} so as to control this modified Selmer group. In this
paper, we consider the $G_k$-representation $T=\ZZ_p(\chi)$, where
$\chi:G_k \ra \ZZ_p^\times$ is a totally odd character which has
finite prime-to-$p$ order. The $G_k$-representation $T$ turns out to
have core Selmer rank $r$ as well. As in~\cite{kbbstark,
kbbiwasawa}, we introduce certain modified Selmer groups associated
with the representation $T$. In this setting, the Euler system that
gives rise to the Kolyvagin system which controls the modified
Selmer group is obtained from the Stickelberger elements.

 Before we state the main results of this article, we set some notation which will be in effect throughout the paper. Let $p, k, G_k$ and $r$ be as above, and let $\chi:G_k\ra\ZZ_p^\times$ be a totally odd character, which is different from the Teichm\"uller character $\omega$ that gives the action of $G_k$ on the $p$-th roots of unity. Let $k_\infty$ denote the cyclotomic $\ZZ_p$-extension of $k$. Consider the following properties:
 \begin{enumerate}
 \item[(A1)] For any prime $\wp$ of $k$ above $p$, we have $\chi(\wp)\neq 1$.
  \item[(A2)] Any prime of $k$ above $p$ totally ramifies in $k_\infty/k$.
 \end{enumerate}
 Note that (A2) is true, for example, if $k/\QQ$ is unramified. The
 hypothesis (A1) ensures that the associated Deligne-Ribet $p$-adic
 $L$-function $\mathcal{L}_{\omega\chi^{-1}}$ does not have a
 trivial zero in the sense of~\cite{gr2}. Note that Wiles gave a
 proof of the main conjecture without assuming these two hypotheses.

Let $L$ be the fixed field of $\ker(\chi)$ inside a fixed algebraic closure $\overline{k}$ of $k$, write $\Delta=\Gal(L/k)$. For any number field $K$ containing $L$, let $A_K$ be the $p$-part of the ideal class group of $K$, and $A_K^\chi$ its $\chi$-isotypic part.  We fix $S$ as the set of places of $k$ which consists of  all infinite places of $k$, all places $\lambda$ which divide the conductor $\ff_\chi$ of $\chi$, as well as all the places of $k$ above $p$.  Finally, let $\theta_{L,S}=\theta_L \in \ZZ_p[\Delta]$ be the \emph{Stickelberger element} (defined precisely in \cite[\S1.2]{kurihara}, see also \S\ref{sec:ES} below) relative to $S$.
For the main results of this article, we will assume the $\chi$-part of  Brumer's conjecture:
\begin{assume}
\label{assume:brumer}
$\theta_K^{\chi}$ annihilates $A_K^\chi$.
\end{assume}

We remark here that Wiles~\cite{wiles90} proved that Brumer's
conjecture as stated above follows from his
proof~\cite{wiles-mainconj} of the \emph{main conjecture of Iwasawa
theory} for totally real fields. In this paper, we prove the other
way around, namely that, assuming Brumer's conjecture, one might
also prove the main conjecture (see Theorem B below; see also
Kurihara's work~\cite{kurihara} where he refines Wiles' result using
a different type of Euler system argument).

The first application of the treatment here is the following (Theorem~\ref{thm:mainequalityk} below):
 \begin{thma}
\label{thm:mainkA}
Suppose the hypothesis \textup{(A1)} and Assumption~\ref{assume:brumer} hold. Then,
$$|A_L^\chi|=|\ZZ_p/\chi(\theta_L)\ZZ_p|.$$
\end{thma}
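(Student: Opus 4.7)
The plan is to produce a Kolyvagin system for $T=\ZZ_p(\chi)$ out of the family of Stickelberger elements $\{\theta_{L(n)}\}_n$ attached to suitable auxiliary extensions $L(n)/L$, and then run the higher core-rank Kolyvagin system machinery developed by the author in \cite{kbbstark, kbbiwasawa} in order to compute $|A_L^\chi|$. The argument should run in close parallel with the Rubin--Stark case treated there, with Brumer's conjecture playing exactly the role that (a suitable version of) Stark's conjecture played: it is what enables the passage from a formal \emph{annihilator} to an actual cohomology class.

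More concretely, for each squarefree product $n$ of Kolyvagin primes $\ell$ of $k$ (with $\ell\nmid\ff_\chi p$ and satisfying the usual splitting conditions, parametrized by some set $\NN$), I would form the Stickelberger element $\theta_{L(n)}$ for the extension $L(n)/k$. By Assumption~\ref{assume:brumer} applied to $L(n)$, this element annihilates $A_{L(n)}^\chi$, and this annihilation is precisely the input one needs to define a Kolyvagin-type derivative class $\kappa_n\in H^1(k,T/p^M T)$: descend the Kolyvagin derivative of $\theta_{L(n)}$ along $\Gal(L(n)/k)$ and use annihilation to see it lands in the correct cohomology group. One then has to check that the collection $\{\kappa_n\}$ satisfies the Kolyvagin system axioms relative to a \emph{modified} Selmer structure $\FF$ on $T$, constructed so that $H^1_\FF(k,T)$ recovers $A_L^\chi$. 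Hypothesis (A1) enters at this step to rule out a trivial zero of the Deligne--Ribet $p$-adic $L$-function $\al_{\omega\chi^{-1}}$ and to ensure that $\FF$ is the arithmetically correct Selmer structure.

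Once $\kappa^{\textup{St}}\in\KS(T,\FF)$ has been built with its bottom term $\kappa^{\textup{St}}_1$ tied to $\chi(\theta_L)$, the divisibility part of the higher-rank Kolyvagin system theorem yields $|H^1_\FF(k,T)|\leq|\ZZ_p/\chi(\theta_L)\ZZ_p|$, which combined with $H^1_\FF(k,T)\cong A_L^\chi$ is one half of the asserted equality. For the reverse inequality I would argue via a primitivity / stub-class argument in the sense of Mazur--Rubin, showing that $\kappa^{\textup{St}}$ is primitive so that the Kolyvagin system lower bound matches the upper bound exactly; alternatively, one could appeal to the analytic class number formula and the nonvanishing of $\chi(\theta_L)$ guaranteed by (A1). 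The main technical obstacle will be the verification of the Kolyvagin system axioms for $\kappa^{\textup{St}}$ — in particular, showing that the distribution relations among the $\theta_{L(n)}$ translate cleanly into the finite/transverse local congruences at the auxiliary primes $\ell\mid n$ — together with the bookkeeping needed to identify $H^1_\FF(k,T)$ with $A_L^\chi$ on the nose, keeping precise track of local contributions at places of $S$.
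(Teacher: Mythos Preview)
Your broad strategy---Stickelberger elements plus Brumer give an Euler system, descend to a Kolyvagin system, bound a Selmer group, then invoke the class number formula for equality---matches the paper's. But two points in your sketch are genuinely wrong or missing, and they are exactly where the content lies.

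First, the Selmer-theoretic identification is backwards. The class group $A_L^\chi$ is \emph{not} $H^1_{\FF}(k,T)$ for any reasonable Selmer structure $\FF$ on $T$; in fact the classical Selmer group $H^1_{\FF_{\textup{cl}}}(k,T)$ vanishes. Rather $A_L^\chi\cong H^1_{\FF_{\textup{cl}}^*}(k,T^*)$, the \emph{dual} Selmer group on the Cartier dual $T^*=\pmb{\mu}_{p^\infty}\otimes\chi^{-1}$. The Kolyvagin system bound has the form $\textup{length}\,H^1_{\FF^*}(k,T^*)\le\textup{length}\,(H^1_{\FF}(k,T)/\ZZ_p\kappa_1)$, so one must work with a Selmer structure $\FF$ on $T$ and then compare $H^1_{\FF^*}(k,T^*)$ with $A_L^\chi$ via a Poitou--Tate four-term exact sequence. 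This comparison step is not just bookkeeping: it is what converts the bound on the $\FF$-modified dual Selmer group into the inequality $|A_L^\chi|\le |\al/\ZZ_p\cdot c_k^{\textup{St}}|$.

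Second, and more importantly, you have not said what $\FF$ is, and ``the higher core-rank machinery of \cite{kbbstark,kbbiwasawa}'' does not apply as a black box here. The canonical Selmer structure $\FFc$ on $T$ has core Selmer rank $r=[k:\QQ]$, and for $r>1$ the Mazur--Rubin theory gives no useful bound. The paper's device is to fix a rank-one $\ZZ_p$-direct summand $\al\subset H^1(k_p,T)$ and define the $\al$-modified Selmer structure $\FF_\al$ by setting $H^1_{\FF_\al}(k_p,T)=\al$; this forces the core rank down to~$1$. But then the Euler-to-Kolyvagin map only lands in $\overline{\KS}(T,\FF_\al,\PP)$ if the Euler system is \emph{$\pmb\al$-restricted}, i.e.\ its classes localize at $p$ into compatible lifts of $\al$ over all the auxiliary ray class fields. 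Arranging this requires choosing a coherent family of homomorphisms $\lambda_n^\tau:U_{L_n(\tau)}^\chi\to\ZZ_p$ (which uses that the semi-local cohomology at $p$ is free of the correct rank over the relevant group rings---this is where (A1) and (A2) are used), and then defining $c_{k_n(\tau)}^{\textup{St}}=\lambda_n^\tau\circ\theta_{L_n(\tau)}^\chi$. Brumer's conjecture is what makes this composition land where it should. This construction, not the distribution relations themselves, is the technical heart.

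Finally, for the reverse inequality the paper does not prove primitivity of $\pmb\kappa^{\textup{St}}$ directly; it uses the analytic class number formula, and when $\pmb{\mu}_p\subset L$ one needs an extra input (the $\chi=\omega$ case) that escapes the method.
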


With a bit more work, we can prove the Iwasawa theoretic version of Theorem A, which we state below. Set $\Gamma=\Gal(k_\infty/k)$ and $\LL=\ZZ_p[[\Gamma]]$, as usual. Let $\al_{\omega\chi^{-1}}\in \LL$ denote the Deligne-Ribet $p$-adic $L$-function (see~\cite{deligne-ribet}). We  recall in~(\ref{eqn:interpolation}) the basic interpolation property which characterizes $\al_{\omega\chi^{-1}}$. Let $\textup{Tw}_{\langle\rho_{\textup{cyc}}\rangle}$ be a certain twisting operator on $\LL$ (see \S\ref{subsec:mainthmkinfty} below for its definition). For any abelian group $A$, let $A^{\vee}:=\Hom(A,\QQ_p/\ZZ_p)$ denote its Pontryagin dual, and finally, let $\textup{char}(M)$ denote the characteristic ideal of a finitely generated $\LL$-module $M$ (with the convention that $\textup{char}(M)=0$ unless $M$ is $\LL$-torsion). Then we are able to prove (see Theorem~\ref{thm:mainequalitykinfty} and Corollary~\ref{cor:mainequalityinfty} for a slightly improved version so as to include the case $\pmb{\mu}_p \subset L$):

 \begin{thmb}
Suppose the hypotheses \textup{(A1)}-\textup{(A2)} as well as Assumption~\ref{assume:brumer} hold. Assume also that $L$ does not contain $p$th roots of unity. Then,
$$\textup{char}\left((\varinjlim_n A_{L_n}^\chi)^{\vee}\right)=\textup{Tw}_{\langle\rho_{\textup{cyc}}\rangle}(\al_{\omega\chi^{-1}}).$$
\end{thmb}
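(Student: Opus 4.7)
The plan is to construct an $\LL$-adic Kolyvagin system for the cyclotomic deformation $\vv$ of $T=\ZZ_p(\chi)$ out of Stickelberger elements, and then apply the Kolyvagin system machinery for core Selmer rank $r$ (developed by the author in \cite{kbbiwasawa}) to read off the characteristic ideal of the Iwasawa module $(\varinjlim_n A_{L_n}^\chi)^{\vee}$. Throughout, I would work with the $\LL$-adic analog of the modified Selmer structure used in the proof of Theorem~A.

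First, I would assemble the Stickelberger elements $\{\theta_{L_n(\qq),S}\}_{n,\qq}$ (with $\qq$ ranging over squarefree products of Kolyvagin primes of $k$) into an $\LL$-adic Euler system for $\vv$. Assumption~\ref{assume:brumer} applied at each finite layer $L_n(\qq)$ ensures, via an annihilation-to-cohomology exact sequence, that these Stickelberger elements lift to classes in $H^1(L_n(\qq),\ZZ_p(\chi))$ suitable for the Kolyvagin derivation. The norm compatibility of Stickelberger elements in the $\Gamma$-direction and their usual distribution relation in the $\qq$-direction supply the Euler system axioms. Adapting the construction from Theorem~A to this $\LL$-adic setting yields a Kolyvagin system $\kappa\in\KS(\vv)$ for the modified Selmer structure on $\vv$.

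Next comes the leading-term calculation. Using the interpolation property~(\ref{eqn:interpolation}) of the Deligne-Ribet $p$-adic $L$-function, I would identify the regulator-type image of $\kappa$, up to an $\LL$-unit, with $\textup{Tw}_{\langle\rho_{\textup{cyc}}\rangle}(\al_{\omega\chi^{-1}})$; the twist $\textup{Tw}_{\langle\rho_{\textup{cyc}}\rangle}$ and the appearance of $\omega\chi^{-1}$ rather than $\chi$ encode the shift between the Galois side (character $\chi$) and the analytic side (Deligne-Ribet, defined for totally even characters). The core-rank-$r$ Kolyvagin system machinery of \cite{kbbiwasawa} then delivers the equality
\[
\textup{char}\left((\varinjlim_n A_{L_n}^\chi)^{\vee}\right)=\left(\textup{Tw}_{\langle\rho_{\textup{cyc}}\rangle}(\al_{\omega\chi^{-1}})\right),
\]
once the modified Iwasawa Selmer group has been identified with $(\varinjlim_n A_{L_n}^\chi)^{\vee}$. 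Hypotheses (A1), (A2) and $\pmb{\mu}_p\not\subset L$ feed into precisely this identification: (A1) prevents trivial-zero collapse at primes above $p$, (A2) ensures that the local conditions at $p$-primes behave functorially up the cyclotomic tower, and $\pmb{\mu}_p\not\subset L$ keeps the $\chi$-isotypic decomposition uncontaminated by the Teichmüller character.

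The principal obstacle is the leading-term calculation for this rank-$r$ Kolyvagin system over $\LL$: tracking the exterior-algebra formalism of rank-$r$ Kolyvagin systems through the cyclotomic twist, and matching the $\LL$-adic regulator of the Stickelberger Kolyvagin system against the Deligne-Ribet interpolation, requires delicate bookkeeping. The propagation of this leading-term identity from the finite-level calculation of Theorem~A up the tower is the main technical point that separates Theorem~B from Theorem~A and justifies the author's remark that Theorem~B requires ``a bit more work.''
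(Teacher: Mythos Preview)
Your outline is broadly on the right track, but there is a genuine conceptual confusion in your invocation of ``the core-rank-$r$ Kolyvagin system machinery'' and ``the exterior-algebra formalism of rank-$r$ Kolyvagin systems.'' That is not what this paper does. The whole point of the $\pmb{\al}$-restricted set-up is to \emph{avoid} rank-$r$ machinery: one fixes a rank-one $\LL$-direct summand $\all\subset H^1(k_p,\mathbb{T})$, defines the modified Selmer structure $\FF_{\all}$, and proves (Proposition~\ref{modifiedcorerank}) that $\FF_{\all}$ has core Selmer rank \emph{one}. The Stickelberger Euler system is made into an $\pmb{\al}$-restricted Euler system by composing with a carefully chosen family of homomorphisms $\{\lambda_m^\eta\}$ on the semi-local units (Proposition~\ref{prop:constructhoms}), which lands the resulting Kolyvagin system $\pmb{\kappa}^{\textup{St}_\infty}$ in $\overline{\KS}(\mathbb{T},\FF_{\all},\PP)$. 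One then applies the standard rank-one Mazur--Rubin bound (Theorem~\ref{thm:mainapplicationKS}), not any exterior-power formalism. There is no ``$\LL$-adic regulator of the Stickelberger Kolyvagin system'' to compute; the leading term is simply $c_{k_\infty}^{\textup{St}}=\chi_{_{\LL}}(\Theta_{L_\infty})^\bullet\cdot\Phi$ where $\Phi$ generates $\all$, and this is read off directly from the construction (equation~(\ref{eqn:ESinftyexplicit})).

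Two further points your sketch misses. First, the modified Selmer group $H^1_{\FF_{\all}^*}(k,\mathbb{T}^*)^\vee$ is \emph{not} identified with $(\varinjlim_n A_{L_n}^\chi)^\vee$; rather, they sit in a four-term exact sequence (Proposition~\ref{prop:compare selmer over k_infty}) whose other two terms are $H^1_{\FF_{\all}}(k,\mathbb{T})/\LL\cdot c$ and $\all/\LL\cdot c$, and it is this sequence that converts the Kolyvagin-system divisibility into $\textup{char}((\varinjlim_n A_{L_n}^\chi)^\vee)\mid\chi_{_{\LL}}(\Theta_{L_\infty})^\bullet$. Second, the Kolyvagin-system argument only gives one divisibility; the upgrade to equality (Theorem~\ref{thm:mainequalitykinfty}) comes from the analytic class number formula, and it is precisely here that the hypothesis $\pmb{\mu}_p\not\subset L$ is used---it ensures one does not also need the inequality for $\chi=\omega$, which lies outside the scope of the method. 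Finally, the identification $\chi_{_{\LL}}(\Theta_{L_\infty})^\bullet=\textup{Tw}_{\langle\rho_{\textup{cyc}}\rangle}(\al_{\omega\chi^{-1}})$ is a direct interpolation check (Lemma~\ref{lem:twistofpadicL}), not a delicate bookkeeping exercise.
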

 These results have already been obtained by Wiles~\cite{wiles-mainconj} without appealing to the Euler system machinery, even without the assumptions of Theorem B above. Kurihara~\cite{kurihara} proved a refined version of Theorem B using an Euler system argument (still building on Wiles' results), though significantly different than ours. The novelty in this paper is a new treatment of the Euler / Kolyvagin system machinery \emph{\`a la} what we call \emph{$\pmb{\al}$-restricted Euler systems} (see \S\ref{subsec:choosehoms}, especially Definition~\ref{rmkdef:XrestrictedES} and Example~\ref{ex:Lrestricted}), when the \emph{core Selmer rank} (in the sense of Mazur and Rubin~\cite{mr02}) of the Galois representation %\footnote{Here, we study a particular Galois representation that admits an Euler system of Stickelberger element-type. One may hope to utilize the formalism we develop in this paper for an Euler system obtained from Mazur-Tate elements attached to modular forms; however, we do not treat this example in the current paper.}
 in question is $r>1$. One benefit of this new approach presented in this paper is a rather surprising link between Stickelberger elements and the (conjectural) Rubin-Stark elements, which we prove below in Theorem~\ref{thm:maincomparison}; see also Theorem C in this section below and the paragraph that follows its statement.

 As in our earlier papers~\cite{kbbstark, kbbiwasawa}, we improve the Euler system / Kolyvagin system machinery of~\cite{r00, pr-es, kato-es, mr02} to prove Theorem A and B, generalizing Rubin's treatment~\cite[\S3.4]{r00} of the Stickelberger element Euler system. The main obstruction to apply the machinery of~\cite{mr02} directly in our setting is the fact that when $r>1$, what Mazur and Rubin call the canonical Selmer structure in loc.cit. produces a Selmer group too big to control using the theory developed in~\cite{mr02}. In order to deal with this matter we proceed as follows:
 \begin{itemize}
 \item[(1)] We first `refine'  the canonical Selmer structure by introducing more restrictive local conditions at $p$. We achieve this by choosing a \emph{line} $\pmb{\al}$ inside a certain local cohomology group at $p$ (see \S\ref{sub:locp}). This step has to do with the issue discussed in Remark~\ref{rem:comparetorubin}(i) below.
 %\item[(2)] We next face the problem that an Euler system a priori gives rise to a Kolyvagin system for the canonical Selmer structure; and not necessarily for the refined Selmer structure.
 \item[(2)] We introduce what we call the module of \emph{$\pmb{\al}$-restricted Euler systems} (c.f., Definition~\ref{rmkdef:XrestrictedES}). The point in doing so is that, an Euler system (in the sense of Rubin) a priori gives rise to a Kolyvagin system only for the canonical Selmer structure, and not necessarily for the refined Selmer structure that we defined in Step (1). On the other hand, as we verify here, an $\pmb{\al}$-restricted Euler system does give rise to a Kolyvagin system for the refined Selmer structure.
 \item[(3)] We then obtain $\pmb{\al}$-restricted Euler systems starting from Stickelberger elements (c.f.,  Example~\ref{ex:Lrestricted}), which we apply to deduce Theorem A and B above. For this part, one needs to determine the structure of semi-local cohomology groups and using this information, choose a useful collection of homomorphisms~(see~Proposition~\ref{prop:constructhoms} below). The choice of such a collection also appears in~\cite[Proposition D.1.3]{r00}, where Rubin \emph{explicitly} constructs one --though in a different way from ours; see Remark~\ref{rem:comparetorubin} below for a comparison of our construction here to Rubin's work in the case $k=\QQ$~\cite[\S III.4]{r00}. Rubin's construction is useful only when the base field $k$ is $\QQ$, whereas we abstractly show that a collection of homomorphisms with the necessary properties exists for an arbitrary totally real base field $k$.
  \end{itemize}

 The Galois representation (and the Euler system attached to it) which we treat in this paper needs to be handled in a slightly different manner than the case of Rubin-Stark elements (which was studied in~\cite{kbbstark,kbbiwasawa}), as far as the Euler / Kolyvagin system machinery is concerned. In a forthcoming paper~\cite{kbbrankr}, the set up from the current article and that from~\cite{kbb} are combined together to treat the theory of Kolyvagin systems which descend from Euler systems\footnote{More precisely, \emph{Euler systems of rank $r$} in the terminology of~\cite{pr-es}.} for an \emph{arbitrary} self-dual Galois representation whose core Selmer rank is $r>1$.

Our method to improve the results of~\cite{pr-es,r00,mr02} relies on the choice of a rank-one direct summand (which we call $\pmb{\al}$ above) inside the semi-local cohomology group at $p$. This makes our approach seem less natural. We address this issue in Remark~\ref{rem:question} and show that the module generated by the `leading terms' of the Kolyvagin systems constructed this way does \emph{not} depend on the decomposition of the semi-local cohomology group at $p$ into rank-one direct summands; see Theorem~\ref{thm:independent}.

Besides the standard applications (i.e., Theorem A and Theorem B above) of our construction of what we call an \emph{$\pmb{\al}$-restricted Kolyvagin system} (c.f., Definition~\ref{def:KSboth}, Theorem~\ref{thm:kolsys2} and Remark~\ref{rem:differentLs}) out of Stickelberger elements, we also prove the following statement regarding the local Iwasawa theory of Rubin-Stark elements, which was proved in~\cite{kbbiwasawa} assuming the truth of the main conjecture:
\begin{thmc}
\label{thmc}
Let $\psi:G_k\ra\ZZ_p^\times$ be a totally even character. Suppose that  both $\psi$ and $\chi=\omega\psi^{-1}$ satisfy the hypothesis \textup{(A1)}, and assume  that \textup{(A2)} holds as well as Assumption~\ref{assume:brumer}. Then,
$$\textup{char}\left(\wedge^r_\LL H^1(k_p,T^\prime\otimes\LL)/\LL\cdot c_{k_\infty}^{\psi}\right)=\al_{\psi}.$$
\end{thmc}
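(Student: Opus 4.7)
The plan is to combine Theorem B (which we prove in this paper using the Stickelberger Kolyvagin system) with the rigidity of the module of Kolyvagin systems and the Rubin--Stark Kolyvagin system constructed from the elements $c_{k_\infty}^\psi$ in the earlier work~\cite{kbbstark,kbbiwasawa}. The argument parallels the one used in~\cite{kbbiwasawa} to deduce the analogue of Theorem C from the main conjecture: since the hypotheses of Theorem C imply those of Theorem B, the main conjecture is now available to feed into that argument.

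In more detail, I would proceed as follows. First, by the construction of~\cite{kbbstark,kbbiwasawa}, the Rubin--Stark element $c_{k_\infty}^\psi$ gives rise to a Kolyvagin system $\kappa^{\psi}$ for the representation $T^\prime\otimes\LL$ with respect to an appropriately refined (core rank one) Selmer structure. Rigidity of the module of such Kolyvagin systems --- that it is free of rank one over $\LL$ --- together with the Kolyvagin--Mazur--Rubin argument bounding dual Selmer groups by the leading term of a Kolyvagin system, identifies the characteristic ideal of the dual of the refined Selmer group attached to $T^\prime\otimes\LL$ with $\textup{char}\left(\wedge^r_\LL H^1(k_p,T^\prime\otimes\LL)/\LL\cdot c_{k_\infty}^{\psi}\right)$. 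Second, global Poitou--Tate duality, applied to the pair $(T,T^\prime)$ with $\chi=\omega\psi^{-1}$, identifies this dual Selmer group on the $T^\prime$-side (up to the twist $\textup{Tw}_{\langle\rho_{\textup{cyc}}\rangle}$) with $(\varinjlim_n A_{L_n}^\chi)^{\vee}$. By Theorem B, the characteristic ideal of the latter equals $\textup{Tw}_{\langle\rho_{\textup{cyc}}\rangle}(\mathcal{L}_{\omega\chi^{-1}})=\textup{Tw}_{\langle\rho_{\textup{cyc}}\rangle}(\mathcal{L}_\psi)$.

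Chaining these two identifications and unwinding the twist $\textup{Tw}_{\langle\rho_{\textup{cyc}}\rangle}$, which cancels out between the two sides of the duality, yields the desired equality. The main obstacle I anticipate is verifying the precise leading-term formula for $\kappa^{\psi}$, so that its leading invariant is literally the characteristic ideal of the local quotient appearing in the statement of Theorem C and not off by any spurious factor, together with tracking the compatibility between the $\pmb{\al}$-summand that refines the Selmer structure on the $T$-side and the corresponding rank-one complement on the $T^\prime$-side under Tate duality at primes above $p$. Once this bookkeeping is in place, Theorem C follows from Theorem B more or less formally, as in~\cite{kbbiwasawa}.
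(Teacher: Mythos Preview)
Your overall plan is on the right track and is close in spirit to the paper's argument, but two of your steps are mislabeled in ways that matter.

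First, in your step~2 you write that ``rigidity of the module of Kolyvagin systems\ldots together with the Kolyvagin--Mazur--Rubin bound\ldots identifies'' the characteristic ideal of the refined dual Selmer group with $\textup{char}\bigl(\wedge^r H^1(k_p,\mathbb{T}^\prime)/\LL\cdot c_{k_\infty}^\psi\bigr)$. Rigidity says only that $\overline{\KS}(\mathbb{T}^\prime,\FF_{\all_\rho},\PP)$ is free of rank one; the Kolyvagin--Mazur--Rubin machinery gives \emph{equality} only for a \emph{primitive} generator, and only a \emph{divisibility} for an arbitrary Kolyvagin system such as $\pmb{\kappa}^{\textup{Stark}}$. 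So this step as written has a gap: you must first establish that $\pmb{\kappa}^{\textup{Stark}}$ is primitive, and that is precisely where Theorem~B enters. Second, your step~3 calls the passage from the $T^\prime$-side to $(\varinjlim_n A_{L_n}^\chi)^\vee$ ``Poitou--Tate duality applied to the pair $(T,T^\prime)$''. It is not: $T$ and $T^\prime$ are not Cartier dual to each other. The correct mechanism is the twisting isomorphism $\mathbb{T}^\prime\cong\mathbb{T}\otimes\rho$ over the cyclotomic tower (this is what the paper records in Lemma~\ref{lem:rubintwisiting}), which identifies $X_\infty(T^\prime)$ with $X_\infty(T)\otimes\rho$, combined with the comparison exact sequence of Proposition~\ref{prop:compare selmer over k_infty}. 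Poitou--Tate duality appears only \emph{within} each side, inside that comparison sequence.

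The paper organizes the argument differently and more transparently: it twists the Stickelberger Euler system $\mathbf{c}^{\textup{St}}$ by $\rho$ to obtain an $\pmb{\al}_\rho$-restricted Euler system for $T^\prime$, whose associated Kolyvagin system $\pmb{\kappa}^{\textup{St}_\infty,\rho}\in\overline{\KS}(\mathbb{T}^\prime,\FF_{\all_\rho},\PP)$ has leading term explicitly computable as $\al_\psi$. Proposition~\ref{prop:charidealcomparison} (which packages Theorem~B through the twist) and the comparison sequence then show $\pmb{\kappa}^{\textup{St}_\infty,\rho}$ is primitive (Corollary~\ref{cor:KSrhoprimitive}). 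Rigidity now gives $\pmb{\kappa}^{\textup{St}_\infty,\rho}=u\cdot\pmb{\kappa}^{\textup{Stark}}$ with $u\in\LL^\times$, and the identity $\wedge^r H^1(k_p,\mathbb{T}^\prime)/\LL\cdot c_{k_\infty}^{\textup{Stark}}\cong\all_\rho/\LL\cdot\kappa_1^{\textup{Stark}}$ from~\cite{kbbiwasawa} finishes the computation. This route avoids ever invoking the refined dual Selmer group on the $T^\prime$-side directly and yields as a bonus the explicit comparison~(i) of Theorem~\ref{thm:maincomparison}, which your approach does not.
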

 Here, $T^\prime=\ZZ_p(1)\otimes\psi^{-1}$ and $c_{k_\infty}^{\psi}:=\{c_{k_n}^{\psi}\}$ is the $\psi$-part of the collection of (conjectural) Rubin-Stark elements (which we assume to exist) along the cyclotomic tower, see~\cite[\S3]{kbbiwasawa} for a precise definition of these elements.

In fact, we are able to prove considerably more than Theorem C in regard of the Rubin-Stark elements. In Theorem~\ref{thm:maincomparison}(i) below, we obtain a relation between the Stickelberger elements and Rubin-Stark elements (note that the existence of the latter is conjectural), making use of the formalism of $\pmb{\al}$-restricted Euler systems we develop in this paper; as well as the rigidity of the module of $\LL$-adic Kolyvagin systems proved in~\cite{kbb}. This, we believe, is interesting on its own right. We also note that, the ``rigidity phenomenon'' which plays an important role for the link we obtain here (between the Stickelberger elements and Rubin-Stark elements) has recently been utilized by Mazur and Rubin~\cite{mrdarmon} (in a rather similar fashion) to prove an important portion of Darmon's refinement of Gross' conjecture.

We finally remark that, thanks to (an appropriate variant of) Proposition~\ref{prop:semilocalstructure} below, we may by-pass the need of appealing to Krasner's Lemma in~\cite{kbbstark,kbbiwasawa} and hence we may remove the hypothesis that $\chi$ is unramified at all primes $\wp\subset k$ above $p$ on the main results of~\cite{kbbstark,kbbiwasawa}.

 \textbf{Notation:} Besides what we have fixed above, the following notation will be in effect throughout.

  For any field $F$, let $G_F$ denote the Galois group of a fixed separable closure $\overline{F}$ of $F$. For any abelian group $A$, write $$A^{\wedge}:=\Hom(\Hom(A,\QQ_p/\ZZ_p),\QQ_p/\ZZ_p)$$ for its $p$-adic completion. Suppose in addition that $\Delta$ acts on $A$, we then write $A^\chi$ for the $\chi$-isotypic component of $A^{\wedge}$.

 For $k_\infty/k$ as above, let $k_n/k$ be the unique subfield of degree $p^n$. We set $\Gamma_n=\Gal(k_n/k)$ and write $L_n=Lk_n$. For any prime $\qq \subset k$, let $k(\qq)$ denote the $p$-part of the ray class field extension of $k$ modulo $\qq$. For any square free integral ideal $\qq_1\cdots \qq_n=\tau \subset k$, we set $k(\tau)$ as the composite $$k(\tau)=k(\qq_1)\cdots k(\qq_n).$$ Set $\Delta_\tau=\Gal(k(\tau)/k)$. We let $L(\tau)=Lk(\tau)$, $k_n(\tau)=k_nk(\tau)$ and $L_n(\tau)=k_nL(\tau)$.

 \section{Local conditions and Selmer groups}
 Much of this section is a review of the Kolyvagin system machinery and the terminology of~\cite{mr02}, which we will refer to until the end of this paper. The reader who is comfortable with the language of~loc.cit. may safely jump to \S\ref{sec:ES} after a glance at \S\S\ref{subsub:lock}-\ref{subsub:lockinfty}, then at Proposition~\ref{prop:compare selmer}, Corollary~\ref{cor:compare over k with class} and Propositions \ref{prop:compare selmer over k_infty}, \ref{prop:modifiedKSrank1}, Theorem~\ref{main-stark} below. We also note that Remark~\ref{rem:question} (particularly Theorem~\ref{thm:independent}) should be of interest for the general understanding of the Kolyvagin system machinery when the core Selmer core rank is greater than one.

 \subsection{Selmer structures on $T=\ZZ_p(\chi)$}
 Below we use the notation that was set in~\S\ref{introduction}. Recall that $\Gamma:=\Gal(k_\infty/k)$ and $\LL:=\ZZ_p[[\Gamma]]$ is the cyclotomic Iwasawa algebra.

We first recall Mazur and Rubin's definition of a \emph{Selmer structure}, in particular the \emph{canonical Selmer structure} on $T$ and $T\otimes\LL$.

\subsubsection{Local conditions}
\label{local conditions}
 Let $R$ be a complete local noetherian ring, and let $M$ be a $R[[G_k]]$-module which is free of finite rank over $R$. In this paper, we will be interested in the case when $R=\LL$ or its certain quotients, and $M$ is $T\otimes\LL$ or its corresponding quotients by ideals of $\LL$. For example, if we start with the Galois representation $T\otimes\LL$   with coefficients in $\LL$, one gets the representation $T$ upon taking the quotient of $T\otimes\LL$ by the augmentation ideal of $\LL$.

For each place $\lambda$ of $k$, a \emph{local condition} $\FF$ (at $\lambda$)  on $M$ is a choice of an $R$-submodule $H^1_{\FF}(k_\lambda,M)$ of $H^1(k_\lambda,M)$. A local condition $\FF$ at $p$ is a choice of an $R$-submodule $H^1_{\FF}(k_p,M)$ of the semi-local cohomology group $H^1(k_p,M):=\oplus_{\wp|p}H^1(k_\wp,M)$, where the direct sum is over all the primes $\wp$ of $k$ which lie above $p$.

For examples of local conditions, see~\cite[Definition 1.1.6 and 3.2.1]{mr02}.

Suppose that $\FF$ is a local condition (at $\lambda$)  on $M$. If $M^{\prime}$ is a submodule of $M$ (resp., $M^{\prime \prime}$ is a quotient module), then $\FF$ induces local conditions (which we still denote by $\FF$) on $M^{\prime}$ (resp., on $M^{\prime \prime}$), by taking $H^1_{\FF}(k_\lambda,M^{\prime})$ (resp., $H^1_{\FF}(k_\lambda,M^{\prime \prime})$) to be the inverse image (resp., the image) of $H^1_{\FF}(k_\lambda,M)$ under the natural maps induced by $$M^{\prime} \hookrightarrow M, \,\,\, \,\,\,\,\,\,\, M \twoheadrightarrow M^{\prime \prime}.$$

\begin{define}
\label{def:propagation}
\emph{Propagation} of a local condition $\FF$ on $M$ to a submodule $M^{\prime}$ (and a quotient $M^{\prime \prime}$ of $M$) is the local condition $\FF$ on $M^{\prime}$ (and on $M^{\prime \prime}$) obtained following the procedure above.
\end{define}
For example, if $I$ is an ideal of $R$, then a local condition on $M$ induces local conditions on $M/IM$ and $M[I]$, by \emph{propagation}.

\subsubsection{Selmer structures and Selmer groups}
\label{sec:selmer structure}
Notation from \S\ref{local conditions} is in effect throughout this section. We will denote $G_{k_{\lambda}}$ by $\mathcal{D}_{\lambda}$, whenever we wish to identify this group with a closed subgroup of $G_{k}$; namely with a particular decomposition group at $\lambda$. We further define $\mathcal{I}_{\lambda} \subset \mathcal{D}_{\lambda}$ to be the inertia group and $\textup{Fr}_{\lambda} \in \mathcal{D}_{\lambda}/\mathcal{I}_{\lambda}$ to be the arithmetic Frobenius element  at $\lambda$.

\begin{define}
\label{selmer structure}
A \emph{Selmer structure} $\FF$ on $M$ is a collection of the following data:
\begin{itemize}
\item A finite set $\Sigma(\FF)$ of places of $k$, including all infinite places and primes above $p$, and all primes where $M$ is ramified.
\item For every $\lambda \in \Sigma(\FF)$, a local condition (in the sense of \S\ref{local conditions}) on $M$ (which we view now as a $R[[\mathcal{D}_{\lambda}]]$-module), i.e., a choice of $R$-submodule $$H^1_{\FF}(k_{\lambda},M) \subset H^1(k_{\lambda},M).$$
 \end{itemize}
\end{define}
If $\lambda \notin \Sigma(\FF)$, we will also write $H^1_{\FF}(k_{\lambda},M)=H^1_{\textup{f}}(k_{\lambda},M)$, where the module $H^1_{\textup{f}}(k_{\lambda},M)$ is the \emph{finite} part of $H^1(k_{\lambda},M)$, defined as in~\cite[Definition 1.1.6]{mr02}.

\begin{define}
\label{cartier dual}
Define the \emph{Cartier dual} of $M$ to be the $R[[G_k]]$-module $$M^*:=\textup{Hom}(M,\mu_{p^{\infty}}),$$ where $\mu_{p^{\infty}}$ stands for the $p$-power roots of unity.
\end{define}
Let $\lambda$ be a prime of $k$. There is a perfect pairing $$<\,,\,>_\lambda\,:H^1(k_\lambda,M) \times H^1(k_\lambda,M^*) \lra H^2(k_\lambda,\mu_{p^{\infty}}) \stackrel{\sim}{\lra}\QQ_p/\ZZ_p,$$
called the local Tate pairing.
\begin{define}
\label{dual local condition}
The \emph{dual local condition} $\FF^*$ on $M^*$ of a local  condition $\FF$ on $M$ is defined so that $H^1_{\FF^*}(k_\lambda,M^*)$ is the orthogonal complement of $H^1_{\FF}(k_\lambda,M)$ with respect to the local Tate pairing $<\,,\,>_\lambda$.

 The \emph{dual Selmer structure} $\FF^*$ is defined by setting $\Sigma(\FF^*)=\Sigma(\FF)$ and choosing the local conditions on $M^*$ as the dual local conditions $H^1_{\FF^*}(k_\lambda,M^*)=H^1_{\FF}(k_\lambda,M)^{\perp}$ at every prime $\lambda \in \Sigma(\FF^*)$.
\end{define}

\begin{define}
\label{selmer group}
If $\FF$ is a Selmer structure on $M$, we define the \emph{Selmer module} $H^1_{\FF}(k,M)$ to be the kernel of the sum of the restriction maps:
\be\label{eqn:selmergroup}H^1_{\FF}(k,M):=\ker\left(H^1(\textup{Gal}(k_{\Sigma(\FF)}/k),M) \ra \bigoplus_{\lambda \in \Sigma(\FF)}\frac{H^1(k_{\lambda},M)}{H^1_{\FF}(k_{\lambda},M)}\right).\ee
Here, $k_{\Sigma(\FF)}$ is the maximal extension of $k$ which is unramified outside $\Sigma(\FF)$. We also define the dual Selmer module $H^1_{\FF^*}(k,M^*)$ in a similar fashion; just replace $M$ by $M^*$ and $\FF$ by $\FF^*$ in (\ref{eqn:selmergroup}).
\end{define}

\begin{example}
\label{example:canonical selmer}
In this example we recall~\cite[Definition 3.2.1 and  5.3.2]{mr02}.
\begin{itemize}
\item[(i)] Let $R=\ZZ_p$ and let ${M}$ be a free $R$-module endowed with a continuous action of $G_k$, which is unramified outside a finite set of places of $k$.  We define a Selmer structure $\FFc$ on ${M}$ by setting
\begin{itemize}
\item $\Sigma(\FFc)=\{\lambda: M \hbox{ is ramified at } \lambda\}\cup\{\wp|p\}\cup\{v|\infty\}$,
\item $H^1_{\FFc}(k_\lambda, {M})=\left\{\begin{array}{lcl} H^1_{\textup{f}}(k_\lambda,M)&,& \hbox{ if } \lambda \in \Sigma(\FFc), \lambda\nmid p\infty,\\
H^1(k_\lambda,M)&,& \hbox{ if }  \lambda|p.
\end{array} \right.$
\end{itemize}
Here, $H^1_{\finite}(k_\lambda, M):=\ker\{H^1(k_\lambda, M) \ra H^1(k_\lambda, M\otimes\QQ_p)\}$for every $\lambda\nmid p\ff_\chi$.

The Selmer structure $\FFc$ is called the \emph{canonical Selmer structure} on ${M}$.
\item[(ii)] Let now $R=\LL$ be the cyclotomic Iwasawa algebra, and let $\mathbb{M}$ be a free $R$-module endowed with a continuos action of $G_k$, which is unramified outside a finite set of places of $k$.  We define a Selmer structure $\FF_\LL$ on $\mathbb{M}$ by setting
\begin{itemize}
\item $\Sigma(\FF_\LL)=\{\lambda: \mathbb{M} \hbox{ is ramified at } \lambda\}\cup\{\wp|p\}\cup\{v|\infty\}$,
\item $H^1_{\FF_\LL}(k_\lambda, \mathbb {M})=H^1(k_\lambda, \mathbb{M})$ for $\lambda \in \Sigma(\FF_\LL)$
\end{itemize}
 The Selmer structure $\FF_\LL$ is called the \emph{canonical Selmer structure} on $\mathbb{M}$.

As in Definition~\ref{def:propagation}, the induced Selmer structure on the quotients $\mathbb{M}/I\mathbb{M}$ is still denoted by $\FF_\LL$.  Note that $H^1_{\FF_\LL}(k_\lambda, \mathbb{M}/I\mathbb{M})$ will not usually be the same as $H^1(k_\lambda, \mathbb{M}/I\mathbb{M})$. In particular, when $I$ is the augmentation ideal inside $\Lambda$, $\FF_\LL$ on $\mathbb{M}$ will not always propagate to $\FFc$ on $M=\mathbb{M}\otimes\LL/I.$

However, when $M=T$ and $\mathbb{M}=T\otimes\LL$ as in \S\ref{introduction}, it is not hard to see that $\FF_\LL$ \emph{does} propagate to $\FFc$.
\end{itemize}
\end{example}
\begin{rem}
\label{rem:canonical selmer}
When $R=\LL$ and $\ \mathbb{M}=T\otimes\LL$ with $T=\ZZ_p(\chi)$, we will see in~\S\ref{subsubsec:kolsyslambda1} that the Selmer structure $\FFc$ of~\cite[\S2.1]{kbb}  on the quotients $T\otimes\LL/(f)$ may be identified, under the hypothesis (A1) on $\chi$, with the propagation of $\FF_\LL$  to the quotients $T\otimes\LL/(f)$, for every distinguished polynomial $f$ inside $\LL$.
\end{rem}

\begin{define}
\label{def:selmer triple}
A \emph{Selmer triple} is a triple $(M,\FF,\PP)$ where $\FF$ is a Selmer structure on $M$ and $\PP$ is a set of rational primes, disjoint from $\Sigma(\FF)$.
\end{define}

\begin{rem}
In our setting, i.e., when the Galois representation in question is $T\otimes\LL$ or its quotients by ideals of $\LL$, one may explicitly compute the cohomology groups in terms of certain groups of homomorphisms (c.f., \cite[\S I.6.1-3]{r00}). Nevertheless, we will insist on using the cohomological language for the sake of notational consistency with~\cite{mr02} from which we borrow the main technical results. We also hope that the similarity of the ideas applied here and in \cite{kbbstark,kbbiwasawa} are more apparent this way.
\end{rem}

\subsection{Computing Selmer groups explicitly}
In this section, we give an explicit description of the Selmer groups for the $G_k$-representations $T=\ZZ_p(\chi)$ (resp., for $T\otimes\LL$) and  $T^*=\pmb{\mu}_{p^\infty}\otimes\chi^{-1}$ (resp., for $(T\otimes\LL)^*$); following~\cite[\S I.6.2]{r00} and \cite[\S6.1]{mr02}.
\subsubsection{Selmer groups over $k$}
Recall that $L$ is the CM field cut by $\chi$. For any $m \in \ZZ^+$, it follows (as in~\cite[\S6.1]{mr02}) from the inflation-restriction sequence that
\be
\label{eqn:explicit1}
H^1(k,T/p^mT)= H^1(k,\ZZ/p^m\ZZ(\chi))\cong \Hom(G_L,\ZZ/p^m\ZZ)^{\chi^{-1}},
\ee
and similarly for every prime $\lambda$ of $k$,
\be
\label{eqn:explicitlocal1}
H^1(k_\lambda,T/p^mT) \cong \left(\bigoplus_{\qq|\lambda}\Hom(G_{L_{\qq}},\ZZ/p^m\ZZ)\right)^{\chi^{-1}}.
\ee
 Therefore, for the semi-local cohomology at a rational prime $\ell$, we have
\be
\label{eqn:explicitsemilocal1}
H^1(k_\ell,T/p^mT)\cong \bigoplus_{\lambda|\ell}\left(\bigoplus_{\qq|\lambda}\Hom(G_{L_{\qq}},\ZZ/p^m\ZZ)\right)^{\chi^{-1}}.
\ee
Passing to the inverse limit, we obtain
\be
\label{eqn:expT}
H^1(k,T)\cong \Hom(G_L,\ZZ_p)^{\chi^{-1}},
\ee
and
\be
\label{eqn:expT2}
H^1(k_\ell,T)\cong \bigoplus_{\lambda|\ell}\left(\bigoplus_{\qq|\lambda}\Hom(G_{L_{\qq}},\ZZ_p)\right)^{\chi^{-1}}.
\ee

For the dual representation $T^*$, we have by the inflation-restriction sequence and by Kummer theory
\be
\label{eqn:explicit2}
H^1(k,T^*[p^m])= H^1(k,\pmb{\mu}_{p^m}\otimes\chi^{-1})\cong \left(L^{\times}/(L^{\times})^{p^m}\right)^{\chi},
\ee
and similarly for every prime $\lambda\subset k$,
\be
\label{eqn:explicitlocal2}
H^1(k_\lambda,T^*[p^m])\cong \left(L_{\lambda}^{\times}/(L_{\lambda}^{\times})^{p^m}\right)^{\chi}.
\ee
Also for the semi-local cohomology, we have
\be
\label{eqn:explicitsemilocal2}
H^1(k_\ell,T^*[p^m])\cong \left(L_{\ell}^{\times}/(L_{\ell}^{\times})^{p^m}\right)^{\chi},
\ee
where $L_\lambda:=L\otimes_k k_\lambda$, the sum of the completions of $L$ at the primes above $\lambda$, and $L_\ell:=L\otimes_{\QQ}\QQ_\ell$. Taking direct limits, we see that
\be
\label{eqn:expT*}
H^1(k,T^*)\cong \left(L^{\times}\otimes\QQ_p/\ZZ_p\right)^{\chi},
\ee
and
\be
\label{eqn:expT*2}
H^1(k_\ell,T^*)\cong \left(L_{\ell}^{\times}\otimes\QQ_p/\ZZ_p\right)^{\chi}.
\ee

\begin{prop}
\label{prop:local conditions explicit}
The canonical Selmer structure $\FFc$ on $T$ (resp., $\FFc^*$ on $T^*$) is given by
\begin{itemize}
\item  $\Sigma(\FFc)=\Sigma(\FFc^*)=\{\lambda: \lambda | p\ff_\chi\} \cup \{v| \infty\}$,
\end{itemize}
and by setting \textup{(}using the identifications above\textup{)}:
\begin{itemize}
\item $H^1_{\FFc}(k_\ell,T)=\left(\bigoplus_{\qq|\ell}\Hom(G_{L_{\qq}}/\mathcal{I}_{\qq},\ZZ_p)\right)^{\chi^{-1}}$, \\$H^1_{\FFc^*}(k_\ell,T^*)=(\mathcal{O}_{L,\ell}^\times\otimes\QQ_p/\ZZ_p)^{\chi}$, if $\ell\neq p$,
\item $H^1_{\FFc}(k_p,T)=H^1(k_p,T)$, \\ $H^1_{\FFc^*}(k_p,T^*)=0$.
\end{itemize}
\end{prop}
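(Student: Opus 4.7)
\emph{Proof proposal.} The plan is to verify each of the three assertions of the proposition --- the determination of $\Sigma(\FFc)$, the local condition at primes above $p$, and the local conditions at primes $\ell\neq p$ --- directly from the definitions in \S\ref{sec:selmer structure}, via the explicit descriptions (\ref{eqn:expT})--(\ref{eqn:expT*2}).

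Since $\chi$ has prime-to-$p$ order, the representation $T=\ZZ_p(\chi)$ is ramified at a finite prime $\lambda\subset k$ precisely when $\lambda\mid\ff_\chi$; the same holds for $T^*$ after including primes above $p$ where $\pmb{\mu}_{p^\infty}$ is ramified. In either case, by the definition of $\FFc$ in Example~\ref{example:canonical selmer}, $\Sigma(\FFc)=\Sigma(\FFc^*)=\{\lambda:\lambda\mid p\ff_\chi\}\cup\{v\mid\infty\}$. The local condition at $p$ is tautologically $H^1(k_p,T)$; its annihilator under the perfect local Tate pairing, $H^1_{\FFc^*}(k_p,T^*)$, is then the zero module.

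Fix now $\ell\neq p$. Under the identification (\ref{eqn:expT2}) of $H^1(k_\ell,T)$ with $\bigoplus_{\lambda\mid\ell}(\bigoplus_{\qq\mid\lambda}\Hom(G_{L_\qq},\ZZ_p))^{\chi^{-1}}$, I would check that the finite part $H^1_{\FFc}(k_\ell,T)$ corresponds to those systems of homomorphisms which factor through the unramified quotients $G_{L_\qq}/\mathcal{I}_\qq$, giving the first bullet. For the dual condition, I would invoke the compatibility of local Tate duality with local class field theory: under (\ref{eqn:expT*2}) and the local reciprocity isomorphism $L_\qq^{\times,\wedge}\cong G_{L_\qq}^{\mathrm{ab},\wedge}$, the subgroup of local units $\mathcal{O}_{L_\qq}^{\times}$ maps onto the inertia subgroup $\mathcal{I}_\qq\subset G_{L_\qq}^{\mathrm{ab}}$. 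Therefore the orthogonal complement of the unramified subspace of $H^1(k_\ell,T)$ becomes the $\chi$-projection of the unit group, i.e., $(\mathcal{O}_{L,\ell}^{\times}\otimes\QQ_p/\ZZ_p)^{\chi}$.

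The main obstacle is the last step: one must carefully transport the local Tate pairing across the identifications (\ref{eqn:expT2}) and (\ref{eqn:expT*2}) and verify that taking $\chi^{\pm 1}$-isotypic components preserves the orthogonality between unramified classes and local units. This bookkeeping is classical but requires care when $\ell$ splits in $L/k$, because $\Delta$ permutes the primes $\qq\mid\ell$ and the isotypic projection interacts nontrivially with the direct-sum decomposition.
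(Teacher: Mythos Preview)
The paper does not argue this directly; it simply defers to \cite[\S I.6.2--3]{r00}, and your sketch is exactly the computation carried out there. The only subtlety you leave implicit is the case $\ell\mid\ff_\chi$, $\ell\neq p$, where $T$ is ramified and the canonical finite condition is not \emph{a priori} the unramified one; Rubin checks in loc.\ cit.\ that it nevertheless reduces to the unramified-homomorphism description, after which the duality argument you outline via local reciprocity goes through uniformly in $\ell\neq p$.
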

Here, $\mathcal{I}_{\qq}$ stands for a fixed inertia group at $\qq$, and $\mathcal{O}_{L,\ell}:=\mathcal{O}_{L}\otimes\ZZ_\ell$ is the sum of the local units inside $L_\ell=\oplus_{\qq|\ell} L_{\qq}$.
\begin{proof}
This is proved in~\cite[\S I.6.2-3]{r00}.
\end{proof}

\begin{define}
\label{def:classical Selmer k}
We define the classical Selmer structure $\FF_{\cl}$ on $T$ (and $\FF_{\cl}^*$ on $T^*$) by setting $\Sigma(\FF_{\cl})=\Sigma(\FFc)$,
and by letting
\begin{itemize}
\item $H^1_{\FF_{\cl}}(k_\ell,T)=H^1_{\FFc}(k_\ell,T)$, and \\
$H^1_{\FF_{\cl}^*}(k_\ell,T^*)=H^1_{\FFc^*}(k_\ell,T^*)$, if $\ell\neq p$,
\item $H^1_{\FF_{\cl}}(k_p,T)=\left(\bigoplus_{\qq|p}\Hom(G_{L_{\qq}}/\mathcal{I}_{\qq},\ZZ_p)\right)^{\chi^{-1}}$, and\\
$H^1_{\FF_{\cl}^*}(k_p,T^*)=(\mathcal{O}_{L,p}^\times\otimes\QQ_p/\ZZ_p)^{\chi}$.
\end{itemize}
\end{define}
\begin{rem}
\label{rem:compare classical with can}
If we assume that (A1) holds, it follows from the proof of~\cite[Proposition III.2.6]{r00} (see also~\cite[Lemma 6.1.2]{mr02}) that $H^1_{\FF_{\cl}}(k_p,T)=0$ and $H^1_{\FF_{\cl}^*}(k_p,T^*)=H^1(k_p,T^*)$. We therefore have the following exact sequences
$$\xymatrix@R=.3cm{0\ar[r]&H^1_{\FF_{\cl}}(k,T)\ar[r]& H^1_{\FFc}(k,T)\ar[r]^{\textup{loc}_p}&H^1(k_p,T)\\
0\ar[r]&H^1_{\FFc^*}(k,T^*)\ar[r]& H^1_{\FF_{\cl}^*}(k,T^*)\ar[r]^{\textup{loc}_p^*}&H^1(k_p,T^*)
}$$
Furthermore, the image of $\textup{loc}_p$ is the orthogonal complement of the image of $\textup{loc}_p^*$, by the Poitou-Tate global duality theorem. We note that the classical Selmer group $H^1_{\FF_{\cl}}(k,T)$ (resp., $H^1_{\FF_{\cl}^*}(k,T^*)$) is denoted by $\mathcal{S}(k,T)$ (resp., by $\mathcal{S}(k,W^*)$) in~\cite{r00}.
\end{rem}

\begin{prop}
\label{prop:classical selmer explicit}
Let $A_L$ denote the $p$-part of the ideal class group of $L$. Then, $H^1_{\FF_{\cl}}(k,T)=0$ and $H^1_{\FF_{\cl}^*}(k,T^*)\cong A_L^\chi$.
\end{prop}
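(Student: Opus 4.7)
The plan is to translate the classical Selmer groups into familiar objects from class field theory, using the explicit descriptions recorded in~(\ref{eqn:expT})--(\ref{eqn:expT*2}) together with Definition~\ref{def:classical Selmer k}.

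For the vanishing statement, I would start from the identification $H^1(k,T)\cong \Hom(G_L,\ZZ_p)^{\chi^{-1}}$. Since $\FF_{\cl}$ imposes the unramified local condition $\Hom(G_{L_\qq}/\mathcal{I}_\qq,\ZZ_p)$ at every prime (finite or above $p$), an element of $H^1_{\FF_{\cl}}(k,T)$ corresponds to a continuous homomorphism $G_L\to\ZZ_p$ in the $\chi^{-1}$-eigencomponent which is unramified everywhere. By global class field theory, such a homomorphism factors through the Galois group $A_L$ of the Hilbert $p$-class field of $L$, so $H^1_{\FF_{\cl}}(k,T)=\Hom(A_L,\ZZ_p)^{\chi^{-1}}$. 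Because $A_L$ is finite, this is trivially zero.

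For the second isomorphism, I would use Kummer theory $H^1(k,T^*)\cong (L^\times\otimes\QQ_p/\ZZ_p)^\chi$, under which the local conditions $(\mathcal{O}_{L,\lambda}^\times\otimes\QQ_p/\ZZ_p)^\chi$ correspond to the kernel of the valuation maps to $\bigoplus_{\qq|\lambda}\QQ_p/\ZZ_p$. Thus $H^1_{\FF_{\cl}^*}(k,T^*)$ equals the $\chi$-component of the kernel of the global valuation map $L^\times\otimes\QQ_p/\ZZ_p\to\bigoplus_\qq\QQ_p/\ZZ_p$. Splitting the fundamental exact sequence $0\to\mathcal{O}_L^\times\to L^\times\to\bigoplus_\qq\ZZ\to\textup{Cl}(L)\to 0$ into two short exact sequences and tensoring with $\QQ_p/\ZZ_p$ (noting that $(\bigoplus_\qq\ZZ)\otimes\QQ_p/\ZZ_p=\bigoplus_\qq\QQ_p/\ZZ_p$, that $\textup{Cl}(L)\otimes\QQ_p/\ZZ_p=0$, and that $\textup{Tor}_1(\textup{Cl}(L),\QQ_p/\ZZ_p)=A_L$) yields the short exact sequence
\[
0\lra \mathcal{O}_L^\times\otimes\QQ_p/\ZZ_p \lra H^1_{\FF_{\cl}^*}(k,T^*)^{\textup{pre-}\chi} \lra A_L\lra 0.
\]
Taking $\chi$-components (exact because $|\Delta|$ is prime to $p$) then gives an analogous three-term sequence with $H^1_{\FF_{\cl}^*}(k,T^*)$ in the middle and $(\mathcal{O}_L^\times\otimes\QQ_p/\ZZ_p)^\chi$ on the left.

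The main (mild) obstacle is verifying that the leftmost term vanishes. For this, I would exploit that $L$ is CM: complex conjugation $c\in\Delta$ acts trivially on $\mathcal{O}_L^\times\otimes\QQ_p$ since the units of $L$ differ from those of $L^+$ only by roots of unity, while $\chi(c)=-1$ because $\chi$ is totally odd. Hence $(\mathcal{O}_L^\times\otimes\QQ_p)^\chi=0$, and a rank comparison (or equivalently, the fact that $\mathcal{O}_L^\times\otimes\QQ_p/\ZZ_p$ is divisible of the same $\ZZ_p$-corank as $\mathcal{O}_L^\times\otimes\QQ_p$ has $\QQ_p$-dimension, with compatible $\Delta$-action) forces the $\chi$-component of $\mathcal{O}_L^\times\otimes\QQ_p/\ZZ_p$ to be zero. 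This collapses the sequence to the desired isomorphism $H^1_{\FF_{\cl}^*}(k,T^*)\cong A_L^\chi$.
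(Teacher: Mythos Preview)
Your proof is correct and follows essentially the same route as the paper's: both arguments identify $H^1_{\FF_{\cl}}(k,T)$ with $\Hom(A_L,\ZZ_p)^{\chi^{-1}}$ via class field theory, and both produce the same short exact sequence $0\to(\mathcal{O}_L^\times\otimes\QQ_p/\ZZ_p)^\chi\to H^1_{\FF_{\cl}^*}(k,T^*)\to A_L^\chi\to 0$ and kill the left term using that $\chi$ is totally odd. The only difference is packaging: the paper cites \cite[Proposition~6.1.3]{mr02} and \cite[Proposition~I.3.4]{tate} for these two steps, whereas you unwind them directly via the Tor computation on the fundamental sequence and the CM unit theorem, which makes your version more self-contained but otherwise identical in content.
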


\begin{proof}
Proposition 6.1.3 of~\cite{mr02} gives $$H^1_{\FF_{\cl}}(k,T)=\varprojlim_m \Hom(A_L^\chi,\ZZ/p^m\ZZ)=\Hom(A_L^\chi,\ZZ_p);$$we note that the propagation of $\FF_{\cl}$ to $\ZZ/p^m\ZZ(\chi)$ coincides with the Selmer structure $\FF^*$ of \cite[\S6.1]{mr02}. Since $A_L^\chi$ is finite, it follows that $H^1_{\FF_{\cl}}(k,T)=0$.

Similarly, the propagation of $\FF_{\cl}^*$ to $\pmb{\mu}_{p^m}\otimes\chi^{-1}$ coincides with the Selmer structure $\FF$ of  \cite[\S 6.1]{mr02}. It therefore follows from~\cite[Proposition 6.1.3]{mr02} that there is an exact sequence
$$0\lra \left(\mathcal{O}_L^{\times}/(\mathcal{O}_L^{\times})^{p^m}\right)^{\chi}\lra  H^1_{\FF_{\cl}^*}(k,T^*[p^m]) \lra A_L[p^m]^\chi\lra 0.$$
Taking the direct limit with respect to $m$, we obtain the following exact sequence:
$$0\lra \left(\mathcal{O}_L^{\times}\otimes\QQ_p/\ZZ_p\right)^{\chi}\lra  H^1_{\FF_{\cl}^*}(k,T^*) \lra A_L^\chi\lra 0.$$
 Since $\chi$ is totally odd, it follows from~\cite[Proposition I.3.4]{tate} that $(\mathcal{O}_L^{\times})^{\chi}$ is finite, hence $\left(\mathcal{O}_L^{\times}\otimes\QQ_p/\ZZ_p\right)^{\chi}=0$. This completes the proof of the Proposition.
\end{proof}

\subsubsection{Selmer groups over $k_\infty$}
Let $k_n$ denote the unique subfield $k_\infty$, which is of degree $p^n$ over $k$. We also set $L_n=L\cdot k_n$. Repeating the arguments of the previous section (replacing the totally real field $k$ with the totally real field $k_n$), we prove the following:

\begin{lemma}
\label{lem:iwasawa-selmer-explicit}
There is a canonical identification $$\varinjlim_{n} H^1_{\FF_{\cl}}(k_n,T^*)=\varinjlim_{n}A_{L_n}^\chi.$$
\end{lemma}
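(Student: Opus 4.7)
The plan is to simply repeat the argument establishing Proposition~\ref{prop:classical selmer explicit} at each finite layer $k_n$ and then pass to the direct limit. First, I would observe that everything we used over $k$ --- the inflation--restriction identifications (\ref{eqn:explicit2})--(\ref{eqn:expT*}), the description of the canonical and classical local conditions in Proposition~\ref{prop:local conditions explicit} and Definition~\ref{def:classical Selmer k}, and Proposition~6.1.3 of \cite{mr02} --- is insensitive to replacing the base totally real field $k$ by the totally real field $k_n$. Since $\chi$ has prime-to-$p$ order and $[k_n:k]=p^n$, the fixed field of $\ker\chi$ inside $\overline{k_n}$ is exactly $L_n=Lk_n$, and $\Gal(L_n/k_n)$ is canonically identified with $\Delta=\Gal(L/k)$ via restriction, so $\chi$ makes perfectly good sense as a character of $G_{k_n}$.

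Thus, running verbatim the argument of Proposition~\ref{prop:classical selmer explicit} with $k$ replaced by $k_n$ and $L$ replaced by $L_n$, I obtain for every $n$ and $m$ a short exact sequence
\begin{equation*}
0\lra \left(\mathcal{O}_{L_n}^{\times}/(\mathcal{O}_{L_n}^{\times})^{p^m}\right)^{\chi}\lra H^1_{\FF_{\cl}^*}(k_n,T^*[p^m])\lra A_{L_n}[p^m]^{\chi}\lra 0,
\end{equation*}
which upon taking the direct limit in $m$ yields
\begin{equation*}
0\lra \left(\mathcal{O}_{L_n}^{\times}\otimes\QQ_p/\ZZ_p\right)^{\chi}\lra H^1_{\FF_{\cl}^*}(k_n,T^*)\lra A_{L_n}^{\chi}\lra 0.
\end{equation*}
Since $k_n$ is totally real and $L$ is CM, the field $L_n=Lk_n$ is still CM, and $\chi$ is still totally odd; hence by \cite[Proposition~I.3.4]{tate} the group $(\mathcal{O}_{L_n}^{\times})^{\chi}$ is finite and the leftmost term vanishes. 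This gives a canonical identification $H^1_{\FF_{\cl}^*}(k_n,T^*)\cong A_{L_n}^{\chi}$ at each finite layer $n$.

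Finally, I would check that these isomorphisms are compatible with the transition maps in the direct systems, so that they assemble into the desired identification after passing to $\varinjlim_n$. The transition maps on the Selmer side are the usual restriction maps $H^1(k_n,T^*)\to H^1(k_{n+1},T^*)$ (which respect classical local conditions because the definition of $\FF_{\cl}$ at each prime is given by kernels of maps between cohomology groups that are functorial in the base field), while on the class-group side they are induced by $L_n\hookrightarrow L_{n+1}$; the identifications produced by the proof of \cite[Proposition~6.1.3]{mr02} are natural in the base field, so these commute. Taking $\varinjlim_n$ then yields the stated equality.

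The only potentially delicate point is this last naturality check, which is nevertheless routine: both the inflation--restriction identification (\ref{eqn:explicit2}) and Kummer theory are functorial in the base field, and the same is true of the interpretation of $H^1_{\FF_{\cl}^*}$ in terms of $\chi$-isotypic units and class groups. I expect no real obstacle beyond bookkeeping.
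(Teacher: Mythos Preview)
Your proposal is correct and follows exactly the route the paper indicates: the paper's entire proof is the sentence ``Repeating the arguments of the previous section (replacing the totally real field $k$ with the totally real field $k_n$), we prove the following,'' and you have carried this out in detail, including the (routine) compatibility check for the direct limit that the paper leaves implicit. Note also that you have silently corrected what appears to be a typo in the lemma statement: the Selmer structure should be $\FF_{\cl}^*$ (as in Proposition~\ref{prop:classical selmer explicit} and as used in Proposition~\ref{prop:compare selmer over k_infty}), not $\FF_{\cl}$.
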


 \subsection{Modifying the local conditions at $p$}
\label{sub:locp}
When the core Selmer rank of a Selmer structure (in the sense of~\cite{mr02}, see also \S\ref{subsec:KS1} below) is greater than one, it produces a Selmer group which is difficult to control using the Kolyvagin system machinery of~\cite{mr02}. As we will see in~\S\ref{subsec:KS1}, the Selmer structure $\FFc$ on $T$ (resp., $\FF_{\LL}$ on $T\otimes\LL$) has core Selmer rank $r=[k:\QQ]$. Hence, to be able to utilize the Kolyvagin system machinery, we will need to modify $\FFc$ and $\FF_\LL$ appropriately. This is what we do in this section.

Throughout this section we assume (A1) and (A2).
\subsubsection{Local conditions at $p$ over $k$}
\label{subsub:lock}
 \begin{lemma}
 \label{lemma:free for k}
Under our running hypotheses, $$H^1(k_p,T):=\bigoplus_{\wp|p}H^1(k_\wp,T)$$ is a free $\ZZ_p$-module of rank $r=[k:\QQ]$.
 \end{lemma}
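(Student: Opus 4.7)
The plan is to compute, for each prime $\wp\mid p$ of $k$, the rank and the torsion of $H^1(k_\wp,T)$ separately and then take the direct sum. For this particular lemma only hypothesis \textup{(A1)} plays a role; \textup{(A2)} will be needed later, when one passes to $T\otimes\LL$.

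First I would verify that $T^{G_{k_\wp}}=0$ for every $\wp\mid p$. Hypothesis \textup{(A1)} says precisely that $\chi|_{G_{k_\wp}}$ is a nontrivial character (interpreting $\chi(\wp)=0$ when $\chi$ is ramified at $\wp$). Because $\chi$ has prime-to-$p$ order, its values lie in $\pmb{\mu}_{p-1}\subset \ZZ_p^\times$; any $\sigma\in G_{k_\wp}$ with $\zeta:=\chi(\sigma)\neq 1$ then satisfies $\zeta-1\in\ZZ_p^\times$, since $\zeta$ and $1$ remain distinct modulo $p$. Consequently $T^{G_{k_\wp}}=0$, and by the same argument $(T\otimes\QQ_p/\ZZ_p)^{G_{k_\wp}}=0$, since multiplication by $\zeta-1$ is a bijection on $\QQ_p/\ZZ_p$.

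Next I would show that $H^2(k_\wp,T)$ is finite. By local Tate duality, $H^2(k_\wp,T)\cong H^0(k_\wp,T^*)^{\vee}$ with $T^*=\pmb{\mu}_{p^\infty}\otimes\chi^{-1}$, and the Galois action on $T^*$ is given by the cyclotomic character times $\chi^{-1}$. Since the cyclotomic character has infinite image on $G_{k_\wp}$ while $\chi$ has finite image, this product character is nontrivial on $G_{k_\wp}$; hence $H^0(k_\wp,T^*)$ is killed by some nonzero element of $\ZZ_p$ and is a finite $p$-group. Combined with $H^0(k_\wp,T)=0$, the local Euler characteristic formula applied to the $\ZZ_p$-rank-one representation $T$ then yields $\textup{rank}_{\ZZ_p}H^1(k_\wp,T)=[k_\wp:\QQ_p]$.

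Finally, for torsion-freeness I would apply the long exact cohomology sequence attached to $0\to T\to T\otimes\QQ_p\to T\otimes\QQ_p/\ZZ_p\to 0$. Since $H^1(k_\wp,T\otimes\QQ_p)$ is a $\QQ_p$-vector space, the $\ZZ_p$-torsion of $H^1(k_\wp,T)$ is a quotient of $(T\otimes\QQ_p/\ZZ_p)^{G_{k_\wp}}$, which we have already shown is zero. So $H^1(k_\wp,T)$ is $\ZZ_p$-free, and taking the direct sum over $\wp\mid p$ gives $H^1(k_p,T)$ free of rank $\sum_{\wp\mid p}[k_\wp:\QQ_p]=[k:\QQ]=r$. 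The only delicate point is the repeated use of \textup{(A1)} to force vanishing of the relevant $G_{k_\wp}$-invariants; the rest is routine bookkeeping with local duality and the Euler characteristic formula.
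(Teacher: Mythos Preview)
Your proof is correct, but it takes a genuinely different route from either of the two arguments the paper gives. The paper's primary proof proceeds via Iwasawa theory: it first shows that $H^1(k_p,T\otimes\LL)$ is $\LL$-free of rank $r$ (using structure results for semi-local cohomology due to Benois, Colmez, Herr and Perrin-Riou, together with (A1) to kill the torsion), and then checks that the natural map $H^1(k_p,T\otimes\LL)\to H^1(k_p,T)$ is surjective by showing $H^2(k_p,T\otimes\LL)=0$. The paper also records an alternative direct argument: using the explicit identification $H^1(k_p,T)\cong\Hom\bigl((\oplus_{\qq|p}\mathcal{O}_{L_\qq}^{\times,\wedge})^\chi,\ZZ_p\bigr)$ via local class field theory, and then computing the $\QQ_p$-dimension through the $p$-adic logarithm and the normal basis theorem.

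Your approach---vanishing of invariants from (A1), local Tate duality to control $H^2$, the local Euler characteristic formula for the rank, and the short exact sequence $0\to T\to T\otimes\QQ_p\to T\otimes\QQ_p/\ZZ_p\to 0$ for torsion-freeness---is more self-contained and avoids both the Iwasawa-theoretic machinery and the explicit class field theory computation. What it gives up is the information about $H^1(k_p,T\otimes\LL)$ that the paper's main proof establishes along the way (and which is needed immediately afterwards for Lemma~\ref{lemma:free for k_infty}); the paper effectively proves both lemmas at once, whereas your argument handles only the $\ZZ_p$-level statement.
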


 \begin{proof}
We first prove this using the general structure theory of semi-local cohomology groups at $p$. All the references in this proof are to~\cite[Appendix A]{kbbiwasawa}. We note that the results quoted in loc.cit. are originally due to Benois, Colmez, Herr and Perrin-Riou.

By Theorem A.8(i),  the $\LL$-torsion submodule $H^1(k_p,T\otimes\LL)_{\textup{tors}}$ is isomorphic to $\oplus_{\wp|p}T^{H_{k_\wp}}$, where $H_{k_\wp}=\Gal(\overline{k_\wp}/k_{\wp,\infty})$. Since we assume (A1), it follows that $H^1(k_p,T\otimes\LL)_{\textup{tors}}=0$. Theorem A.8(ii) now concludes that  the $\LL$-module  $H^1(k_p,T\otimes\LL)$ is free rank $r$. Furthermore,
$$\textup{coker}[H^1(k_p,T\otimes\LL)\lra H^1(k_p,T)]=H^2(k_p,T\otimes\LL)[\gamma-1],$$
 where $\gamma$ is any topological generator of $\Gamma$. However, it follows from~\cite[Lemma 2.11]{kbb} that $H^2(k_p,T\otimes\LL)=0$, hence the map $$H^1(k_p,T\otimes\LL)\lra H^1(k_p,T)$$ is surjective. Lemma now follows.
 \end{proof}
\begin{rem}
\label{rem:alternative proof}
There is a more direct proof of Lemma~\ref{lemma:free for k}. In this remark, we include this alternative proof of this lemma.

By the explicit description of the semi-local cohomology groups in (\ref{eqn:expT2})
$$H^1(k_p,T)\cong \bigoplus_{\wp|p}\left(\bigoplus_{\qq|\wp}\Hom(G_{L_{\qq}},\ZZ_p)\right)^{\chi^{-1}}.$$
It follows at once from this description that $H^1(k_p,T)$ is $\ZZ_p$-torsion free, hence free. Further, since $\ZZ_p$ is an abelian group we may rewrite the equality above as
$$H^1(k_p,T)\cong \bigoplus_{\wp|p}\left(\bigoplus_{\qq|\wp}\Hom(G_{L_{\qq}}^{\textup{ab}},\ZZ_p)\right)^{\chi^{-1}},$$
where $G_{L_{\qq}}^{\textup{ab}}$ stands for the abelianization of $G_{L_{\qq}}$. By local class field theory $G_{L_{\qq}}^{\textup{ab}} \cong L_{\qq}^{\wedge}$, the $p$-adic completion of the multiplicative group of $L_{\qq}$. Further, the valuation map $\textup{val}_{\qq}$ gives an isomorphism
$$L_{\qq}^\times \stackrel{\textup{val}_{\qq}}{\lra}\ZZ_p\oplus\mathcal{O}_{L_{\qq}}^{\times,\wedge}.$$
We therefore have
\begin{align*}H^1(k_p,T)& \cong\Hom\left(\bigoplus_{\qq|p}(\ZZ_p\oplus\mathcal{O}_{L_{\qq}}^{\times,\wedge}),\ZZ_p\right)^{\chi^{-1}}\\
& \cong \Hom\left(\left(\oplus_{\qq|p}\ZZ_p\right)^\chi\oplus\left(\oplus_{\qq|p}\mathcal{O}_{L_{\qq}}^{\times,\wedge}\right)^{\chi},\ZZ_p\right).\end{align*}
It follows from (A1) that $\left(\oplus_{\qq|p}\ZZ_p\right)^\chi=0$, hence $$H^1(k_p,T)\cong \Hom\left(\left(\oplus_{\qq|p}\mathcal{O}_{L_{\qq}}^{\times,\wedge}\right)^{\chi},\ZZ_p\right).$$ To prove the Lemma, it suffices to check that the $\QQ_p$-dimension of $V:=\left(\oplus_{\qq|p}\mathcal{O}_{L_{\qq}}^{\times,\wedge}\otimes\QQ_p\right)^{\chi}$ is equal to $r$. The $p$-adic  logarithm gives a homomorphism $\mathcal{O}_{L_{\qq}}^{\times,\wedge} \ra \mathcal{O}_{L_{\qq}}$ with finite kernel and cokernel. Hence
$$V=\left(\oplus_{\qq|p}\mathcal{O}_{L_{\qq}}^{\times,\wedge}\otimes\QQ_p\right)^{\chi}=\left(\oplus_{\qq|p}\mathcal{O}_{L_{\qq}}\otimes\QQ_p\right)^{\chi}=(L\otimes\QQ_p)^{\chi}$$ and therefore the $\QQ_p$-dimension of $V$ equals $r$ by the normal basis theorem.
\end{rem}
 \begin{define}
 \label{def:line}
 Fix a $\ZZ_p$-direct summand $\mathcal{L} \subset H^1(k_p,T)$ such that $\al$ is free of rank one. Fix also a generator $\varphi=\varphi_\mathcal{L}$ of $\mathcal{L}$. Define the \emph{$\al$-modified Selmer structure} $\FF_\al$ on $T$ as follows:

 \begin{itemize}
 \item $\Sigma(\FF_\al)=\Sigma(\FFc)$,
 \item if $\lambda \nmid p$, $H^1_{\FF_\al}(k_\lambda, T)=H^1_{\FFc}(k_\lambda,T)$,
 \item $H^1_{\FF_\al}(k_p,T):=\al \subset H^1(k_p,T)=H^1_{\FFc}(k_p,T)$.
 \end{itemize}
 \end{define}

 \subsubsection{Local conditions at $p$ over $k_\infty$}
 \label{subsub:lockinfty}

 Set $\Gamma=\Gal(k_{\infty}/k)$ as before. Let $k_{\wp}$ denote the completion of $k$ at $\wp$, and let  $k_{\wp,\infty}$ denote the cyclotomic $\ZZ_p$-extension of $k_{\wp}$. Since we assume (A2), we may identify $\Gal(k_{\wp,\infty}/k_{\wp})$ by $\Gamma$ for each $\wp|p$ and henceforth $\Gamma$ will stand for any of these Galois groups. Let $\LL=\ZZ_p[[\Gamma]]$ be the cyclotomic Iwasawa algebra, as usual. We also fix a topological generator $\gamma$ of $\Gamma$, and we set $\xx=\gamma-1$. We will occasionally identify $\LL$ by the power series ring $\ZZ_p[[\xx]]$.
 \begin{lemma}
 \label{lemma:free for k_infty}
 Under the assumptions \textup{(A1)} and \textup{(A2)},  $$H^1(k_p,T\otimes\LL):=\oplus_{\wp|p}H^1(k_\wp,T\otimes\LL)$$ is a free $\LL$-module of rank $r$.
 \end{lemma}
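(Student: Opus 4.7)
The plan is to run essentially the same argument as the first proof of Lemma~\ref{lemma:free for k}, but one level higher in the Iwasawa tower, invoking the general structure theory of semi-local cohomology recorded in~\cite[Appendix A]{kbbiwasawa} (due to Benois--Colmez--Herr--Perrin-Riou). The statement is a direct application of Theorem A.8 of~\cite{kbbiwasawa}: once we check that the $\LL$-torsion submodule of $H^1(k_p, T\otimes\LL)$ vanishes, part (ii) of that theorem produces freeness, and the generic rank is automatically $r$ by the local Euler characteristic computation.

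First, by Theorem A.8(i) of loc.cit., I would identify
\[
H^1(k_p, T\otimes\LL)_{\textup{tors}} \;\cong\; \bigoplus_{\wp\mid p} T^{H_{k_\wp}},
\]
where $H_{k_\wp} = \Gal(\overline{k_\wp}/k_{\wp,\infty})$. Note that the hypothesis (A2) is what makes this identification legitimate with $\LL$ the single cyclotomic Iwasawa algebra, by allowing us to identify $\Gal(k_{\wp,\infty}/k_\wp)$ with $\Gamma$ simultaneously for every $\wp\mid p$ (this identification was already recorded in \S\ref{subsub:lockinfty} just before the statement).

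Next, I would verify that each summand $T^{H_{k_\wp}}$ is trivial under~(A1). Since $T = \ZZ_p(\chi)$ and $H_{k_\wp}$ surjects onto the inertia subgroup together with the Frobenius classes that become trivial in $\Gamma$, the fixed module $T^{H_{k_\wp}}$ is nonzero precisely when $\chi$ factors through $\Gal(k_{\wp,\infty}/k_\wp) = \Gamma$; in particular this forces $\chi$ to be unramified at $\wp$ and $\chi(\textup{Fr}_\wp) = 1$, contradicting (A1). Hence $H^1(k_p, T\otimes\LL)_{\textup{tors}} = 0$, exactly as in the proof of Lemma~\ref{lemma:free for k}.

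With the torsion eliminated, Theorem A.8(ii) of~\cite{kbbiwasawa} immediately yields that $H^1(k_p, T\otimes\LL)$ is a free $\LL$-module, and the rank is $r = [k:\QQ]$ (which equals the sum over $\wp\mid p$ of $[k_\wp : \QQ_p]$). The ``hard'' step is really the torsion vanishing, but that is a direct translation of (A1) via the formula in Theorem A.8(i), so there is no genuine obstacle beyond what was already handled at the finite level.
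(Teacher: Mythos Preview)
Your proposal is correct and matches the paper's approach exactly: the paper's proof simply points back to the first paragraph of the proof of Lemma~\ref{lemma:free for k}, which is precisely the argument you give (Theorem~A.8(i) of~\cite{kbbiwasawa} to identify the torsion with $\bigoplus_{\wp\mid p} T^{H_{k_\wp}}$, (A1) to kill it, then Theorem~A.8(ii) for freeness of rank $r$). Your explicit remark on the role of (A2) in identifying each $\Gal(k_{\wp,\infty}/k_\wp)$ with $\Gamma$ is a welcome clarification that the paper leaves implicit.
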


 \begin{proof}
This is checked in the first part of the proof of Lemma~\ref{lemma:free for k}.
 \end{proof}

 \begin{define}
 \label{def:line over k_infty}
 Fix a $\LL$-rank one direct summand $\mathbb{L} \subset H^1(k_p,T\otimes\LL)$ such that $\all$ maps onto $\al$ under the projection
 \be\label{eqn:projection}\xymatrix{H^1(k_p,T\otimes\LL)\ar@{->>}[r] &H^1(k_p,T).}\ee
  Fix also a generator $\Phi=\Phi_\mathbb{L}$ of $\mathbb{L}$, which maps to $\varphi=\varphi_\al$ under the projection (\ref{eqn:projection}). Define the \emph{$\all$-modified Selmer structure} $\FF_\all$ on $T\otimes\LL$ as follows:

 \begin{itemize}
 \item $\Sigma(\FF_\all)=\Sigma(\FF_\LL)$,
 \item if $\lambda \nmid p$, $H^1_{\FF_\all}(k_\lambda, T\otimes\LL)=H^1_{\FF_\LL}(k_\lambda,T\otimes\LL)$,
 \item $H^1_{\FF_\all}(k_p,T\otimes\LL):=\all \subset H^1(k_p,T\otimes\LL)=H^1_{\FF_\LL}(k_p,T\otimes\LL)$.
 \end{itemize}
 \end{define}

\begin{rem}
\label{rmk:FFall propagates to FFal}
By definition, the image of $H^1_{\FF_{\all}}(k_p,T\otimes\LL)$ is $H^1_{\FF_{\al}}(k_p,T)$ under the map $H^1(k_p,T\otimes\LL)\ra H^1(k_p,T)$. Further, it follows from \cite[Lemma 5.3.1(ii)]{mr02} for $\ell \neq p$ that $H^1_{\FF_{\all}}(k_\ell,T\otimes\LL)$ also maps to $H^1_{\FF_{\al}}(k_\ell,T)$ under the natural map $H^1(k_\ell,T\otimes\LL) \ra H^1(k_\ell,T)$. In other words, $\FF_{\all}$ propagates to $\FF_{\al}$, and there is an induced map $$H^1_{\FF_{\all}}(k,T\otimes\LL) \lra H^1_{\FF_{\al}}(k,T).$$
\end{rem}
\subsection{Global duality and a comparison of Selmer groups}
In this section, we compare the classical Selmer group (which we wish to relate to $L$-values) to the modified Selmer groups (for which we will apply the Kolyvagin system machinery and which we will compute in terms of $L$-values). The necessary tool to accomplish this comparison is Poitou-Tate global duality.
\subsubsection{Comparison over $k$}
\label{subsubsec:compareselmer1}
The definition of the modified Selmer structure $\FF_{\al}$ and Remark~\ref{rem:compare classical with can} gives us the following exact sequences:
$$\xymatrix @R=.3cm{0\ar[r]&H^1_{\FF_{\cl}}(k,T)\ar[r]& H^1_{\FF_{\al}}(k,T)\ar[r]^(.6){ \textup{loc}_p}&\al\\
0\ar[r]&H^1_{\FF_{\al}^*}(k,T^*)\ar[r]&
H^1_{\FF_{\cl}^*}(k,T^*)\ar[r]^(.5){\textup{loc}_p^*}&\frac{H^1_{\FF_{\cl}^*}(k_p,T^*)}{H^1_{\FF_{\al}^*}(k_p,T^*)}}$$
Poitou-Tate global duality (c.f., \cite[Theorem I.7.3]{r00},
\cite[Theorem I.4.10]{milne}) states that the image of
$\textup{loc}_p$ is the orthogonal compliment of the image of
$\textup{loc}_p^*$ with respect to the local Tate pairing. Using
this fact, together with Proposition~\ref{prop:classical selmer
explicit}, one may prove the following Proposition for
$T=\ZZ_p(\chi)$ as above. Note that $H^1_{\FF_{\textup{cl}}}(k,T)=0$
by Proposition~\ref{prop:classical selmer explicit}.
See~\cite[Theorem I.7.3]{r00} for further details:

\begin{prop}
\label{prop:compare selmer}
We have an exact sequence
$$0\ra  H^1_{\FF_{\al}}(k,T)\stackrel{\textup{loc}_p}{\lra}\al\stackrel{(\textup{loc}_p^*)^{\vee}}{\lra}  \left( H^1_{\FF_{\cl}^*}(k,T^*)\right)^{\vee}\lra\left(H^1_{\FF_{\al}^*}(k,T^*)\right)^{\vee}\ra 0,$$ where the map $(\textup{loc}_p^*)^{\vee}$ is induced from localization at $p$ and the local Tate pairing between $H^1(k_p,T)$ and $H^1(k_p,T^*)$.
\end{prop}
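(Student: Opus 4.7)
The plan is to glue together the two three-term exact sequences displayed immediately before the statement into a single four-term sequence via Poitou-Tate global duality, and then to invoke the vanishing $H^1_{\FF_{\cl}}(k,T)=0$ from Proposition~\ref{prop:classical selmer explicit}.

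The key observation is that $\FF_{\cl}$ refines $\FF_{\al}$ in the sense of Mazur-Rubin: the two Selmer structures share the same set $\Sigma$ of bad places, they agree at every $\lambda\nmid p$, and at $p$ we have $H^1_{\FF_{\cl}}(k_p,T)=0\subsetneq\al=H^1_{\FF_{\al}}(k_p,T)$ by Remark~\ref{rem:compare classical with can} (which uses hypothesis (A1)). Dually, $\FF_{\al}^*$ refines $\FF_{\cl}^*$, and the quotient at $p$ is $H^1_{\FF_{\cl}^*}(k_p,T^*)/H^1_{\FF_{\al}^*}(k_p,T^*)=H^1(k_p,T^*)/\al^{\perp}$, which is canonically identified with $\al^{\vee}$ via the perfect local Tate pairing (since $H^1_{\FF_{\al}^*}(k_p,T^*)=\al^{\perp}$ is the very definition of the dual local condition).

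With this set-up I would then apply the standard five-term exact sequence attached to the refinement $\FF_{\cl}\subset\FF_{\al}$ that follows from the Poitou-Tate nine-term sequence (exactly the formalism of \cite[Theorem~I.7.3]{r00}), producing
\begin{align*}
0\to H^1_{\FF_{\cl}}(k,T)\to H^1_{\FF_{\al}}(k,T)\stackrel{\textup{loc}_p}{\lra}\al\stackrel{\delta}{\lra} H^1_{\FF_{\cl}^*}(k,T^*)^{\vee}\to H^1_{\FF_{\al}^*}(k,T^*)^{\vee}\to 0,
\end{align*}
where the middle term collapses to $\al$ because the local quotients at every place other than $p$ are trivial, and the quotient at $p$ has been computed above. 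Proposition~\ref{prop:classical selmer explicit} kills the leftmost term, so the claimed four-term sequence drops out once I identify $\delta$ with $(\textup{loc}_p^*)^{\vee}$. By the construction of the Poitou-Tate pairing, $\delta$ sends a class $c\in\al$ to the linear functional $c^*\mapsto\langle c,\textup{loc}_p^*(c^*)\rangle_p$, and under the identification $\al^{\vee}\cong H^1(k_p,T^*)/\al^{\perp}$ this is precisely the Pontryagin dual of localization-at-$p$ restricted to $H^1_{\FF_{\cl}^*}(k,T^*)$. This last identification is where the only real care is needed; it is a direct unravelling of how Poitou-Tate duality pairs global classes against each other through the local Tate pairing, and no further input beyond what is already established in the excerpt is required.
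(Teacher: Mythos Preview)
Your proposal is correct and follows essentially the same route as the paper: the paper simply invokes Poitou-Tate global duality (specifically \cite[Theorem I.7.3]{r00}) together with the vanishing $H^1_{\FF_{\cl}}(k,T)=0$ from Proposition~\ref{prop:classical selmer explicit}, and leaves the reader to assemble the four-term sequence from the two displayed three-term sequences. You have spelled out this assembly in more detail than the paper does, including the identification of the connecting map with $(\textup{loc}_p^*)^{\vee}$, but the underlying argument is the same.
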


Suppose $c \in  H^1_{\FF_{\al}}(k,T)$ is any class. We still write $c$ for the image of the class $c$ inside $\al=H^1_{\FF_{\al}}(k_p,T)$ under the (injective) map $\textup{loc}_p$.
\begin{cor}
\label{cor:compare over k with class}
The following sequence is exact:
$$ %\scriptstyle
0\ra \frac{H^1_{\FF_{\al}}(k,T)}{\ZZ_p\cdot c}\stackrel{\textup{loc}_p}{\ra}\frac{\al}{\ZZ_p\cdot c} \stackrel{(\textup{loc}_p^*)^{\vee}}{\lra}  \left( H^1_{\FF_{\cl}^*}(k,T^*)\right)^{\vee}\ra\left(H^1_{\FF_{\al}^*}(k,T^*)\right)^{\vee}\ra 0.$$
\end{cor}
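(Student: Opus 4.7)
The plan is to deduce the corollary from Proposition~\ref{prop:compare selmer} by a short diagram chase, essentially quotienting the first two terms of that proposition's exact sequence by $\ZZ_p\cdot c$ and observing that the remaining terms are unaffected.

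First, I would record the key input: by Proposition~\ref{prop:compare selmer} the map $\textup{loc}_p:H^1_{\FF_\al}(k,T)\hookrightarrow\al$ is injective, with image precisely $\ker\bigl((\textup{loc}_p^*)^{\vee}\bigr)$. In particular, the identification $c\in H^1_{\FF_\al}(k,T)\leftrightarrow\textup{loc}_p(c)\in\al$ is unambiguous, and the submodule $\ZZ_p\cdot c\subset\al$ lies in $\ker\bigl((\textup{loc}_p^*)^{\vee}\bigr)$. Hence the composition $(\textup{loc}_p^*)^{\vee}\circ(\al\to\al/\ZZ_p\cdot c)$ factors through $\al/\ZZ_p\cdot c$.

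Second, I would apply the snake lemma to the commutative diagram
\begin{equation*}
\xymatrix@R=.35cm{
0\ar[r] & \ZZ_p\cdot c \ar[r]^{=}\ar@{^{(}->}[d] & \ZZ_p\cdot c \ar[r]\ar@{^{(}->}[d] & 0 \\
0\ar[r] & H^1_{\FF_\al}(k,T) \ar[r]^{\textup{loc}_p} & \al
}
\end{equation*}
where the vertical arrow on the left is injective because $\textup{loc}_p$ is, as noted. The snake lemma yields injectivity of the induced map $H^1_{\FF_\al}(k,T)/\ZZ_p\cdot c\hookrightarrow\al/\ZZ_p\cdot c$, and identifies the cokernel of this latter map with $\al/H^1_{\FF_\al}(k,T)$, i.e., with the image of $(\textup{loc}_p^*)^{\vee}$.

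Finally, I would splice this short exact sequence with the tail of Proposition~\ref{prop:compare selmer}, namely
\[
0\lra \textup{image}\bigl((\textup{loc}_p^*)^{\vee}\bigr)\lra \bigl(H^1_{\FF_{\cl}^*}(k,T^*)\bigr)^{\vee}\lra\bigl(H^1_{\FF_\al^*}(k,T^*)\bigr)^{\vee}\lra 0,
\]
to obtain the claimed four-term exact sequence. There is no real obstacle here: the only thing to verify beyond Proposition~\ref{prop:compare selmer} itself is that $\ZZ_p\cdot c\subset\ker\bigl((\textup{loc}_p^*)^{\vee}\bigr)$, which is immediate from the injectivity of $\textup{loc}_p$ on $H^1_{\FF_\al}(k,T)$.
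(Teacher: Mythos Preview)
Your proposal is correct and is exactly the intended argument: the paper gives no separate proof, treating the corollary as an immediate consequence of Proposition~\ref{prop:compare selmer} obtained by quotienting the first two terms by $\ZZ_p\cdot c$. Your snake-lemma formulation makes explicit what the paper leaves to the reader.
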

\subsubsection{Comparison over $k_\infty$}
\label{subsubsec:compareselmer2}
Repeating the argument of Proposition~\ref{prop:compare selmer} for each field $k_n$ (instead of $k$) and passing to inverse limit we obtain the following:

\begin{prop}
\label{prop:compare selmer over k_infty}
Both of the following sequences of $\LL$-modules are exact:
\begin{itemize}
\item[(i)]
$0\ra{\scriptstyle H^1_{\FF_{\all}}(k,T\otimes\LL)}\stackrel{\textup{loc}_p}{\lra}\all\lra {\scriptstyle \left( \varinjlim_n A_{L_n}^\chi\right)^\vee}\lra{\scriptstyle \left(H^1_{\FF_{\all}^*}(k,(T\otimes\LL)^*)\right)^{\vee}}\ra 0, $

\item[(ii)] For any class $c \in H^1(k,T\otimes\LL)$,
$$0 \ra \frac{{\scriptstyle  H^1_{\FF_{\all}}(k,T\otimes\LL)}}{\LL\cdot c}\stackrel{\textup{loc}_p}{\lra}{\frac{\all}{\LL\cdot c}} {\lra} {\scriptstyle \left(\varinjlim_n A_{L_n}^{\chi}\right)^{\vee}}\lra{\scriptstyle\left(H^1_{\FF_{\all}^*}(k,(T\otimes\LL)^*)\right)^{\vee}}\lra 0.$$

\end{itemize}
\end{prop}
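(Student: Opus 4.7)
The plan is to run the proofs of Proposition~\ref{prop:compare selmer} and Corollary~\ref{cor:compare over k with class} verbatim at every finite layer $k_n$ of the cyclotomic tower, and then pass to the inverse limit in $n$. First I would introduce, at each level, the level-$n$ analogue of $\all$: letting $\al_n \subset H^1(k_{n,p}, T)$ denote the image of $\all$ under the specialization map $H^1(k_p, T\otimes\LL) \to H^1(k_{n,p}, T)$ supplied by Shapiro's lemma, the design of $\all$ as a $\LL$-direct summand lifting $\al$ (Definition~\ref{def:line over k_infty}) ensures that each $\al_n$ is a free rank-one $\ZZ_p[\Gamma_n]$-direct summand and that the $\al_n$ form a coherent inverse system with limit $\all$.

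Next, I would apply the arguments of Propositions~\ref{prop:classical selmer explicit}--\ref{prop:compare selmer} and Corollary~\ref{cor:compare over k with class} with $k$ replaced by $k_n$, using Lemma~\ref{lem:iwasawa-selmer-explicit} to identify $H^1_{\FF_{\cl}^*}(k_n, T^*) \cong A_{L_n}^\chi$. For part (ii) one needs the mild generalization where the quotient is taken by $\ZZ_p[\Gamma_n]\cdot c_n$ rather than $\ZZ_p \cdot c_n$, with $c_n$ denoting the image of $c$ at level $n$; the snake-lemma proof of Corollary~\ref{cor:compare over k with class} carries over unchanged. This produces, functorially in $n$, the exact sequence
$$0 \to H^1_{\FF_{\al_n}}(k_n, T) \stackrel{\textup{loc}_p}{\lra} \al_n \lra (A_{L_n}^\chi)^\vee \lra (H^1_{\FF_{\al_n}^*}(k_n, T^*))^\vee \lra 0,$$
together with its quotient version for (ii).

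Finally I would take $\varprojlim_n$ of each term, using Shapiro's lemma to identify $\varprojlim_n H^1_{\FF_{\al_n}}(k_n, T) = H^1_{\FF_\all}(k, T\otimes\LL)$ (with compatibility away from $p$ supplied by Remark~\ref{rmk:FFall propagates to FFal} and at $p$ by construction), and Pontryagin duality to convert $\varprojlim_n (A_{L_n}^\chi)^\vee$ into $(\varinjlim_n A_{L_n}^\chi)^\vee$ and similarly for the dual Selmer term. Exactness in the limit is preserved because the tower of four-term sequences satisfies Mittag--Leffler: at each level the group $A_{L_n}^\chi$ is finite, and the remaining terms are finitely generated over $\ZZ_p[\Gamma_n]$ (or compact Pontryagin duals of discrete modules).

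The main technical obstacle I anticipate is the identification $\varprojlim_n H^1_{\FF_{\al_n}}(k_n, T) \cong H^1_{\FF_\all}(k, T\otimes\LL)$, i.e.\ verifying that $\FF_\all$ propagates, layer by layer, to the restricted Selmer structures $\FF_{\al_n}$. Away from $p$, this follows from Lemma~5.3.1 of~\cite{mr02} exactly as in Remark~\ref{rmk:FFall propagates to FFal}; at $p$, it reduces to the coherence of the inverse system $\{\al_n\}$, which was engineered into Definition~\ref{def:line over k_infty} by requiring $\all$ to be a $\LL$-direct summand mapping onto $\al$ compatibly with the tower.
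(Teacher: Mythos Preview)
Your proposal is correct and follows essentially the same route as the paper: work at each finite layer $k_n$, pass to the inverse limit, and identify the four terms via Shapiro's lemma, \cite[Lemma 5.3.1]{mr02}, and Lemma~\ref{lem:iwasawa-selmer-explicit}. The only point you gloss over with ``similarly for the dual Selmer term'' is made explicit in the paper: one needs the identification $(T\otimes\LL)^* \cong \varinjlim_n T^*\otimes\ZZ_p[\Gamma_n]$, which the paper verifies via the adjunction between $-\otimes\ZZ_p[\Gamma_n]$ and $\Hom(\ZZ_p[\Gamma_n],-)$, and the paper cites \cite[Proposition B.1.1]{r00} in place of your direct Mittag--Leffler appeal for exactness in the limit.
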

\begin{proof}
We only  give a sketch since similar versions of this Proposition are already available in the literature (c.f., \cite[Theorem I.7.3 and III.2.10]{r00}, \cite[\S III.1.7]{deshalit}).

Thanks to the argument of Proposition~\ref{prop:compare selmer} and \cite[Proposition B.1.1]{r00}, there is an exact sequence
\begin{align*}
0 \lra {\varprojlim_n H^1_{\FF_{\al_n}}(k_n,T)}{\lra}{\varprojlim_n\al_n}{\lra} &{\left( \varinjlim_n H^1_{\FF_{\cl}^*}(k_n,T^*)\right)^{\vee}}\\
&\lra{\left(\varinjlim_n H^1_{\FF_{\al_n}^*}(k_n,T^*)\right)^{\vee}}\lra 0,
\end{align*}
where $\al_n$ is the image of $\all$ under the natural map $$H^1(k_p,T\otimes\LL)\lra H^1((k_n)_p,T).$$
By definition, $\varprojlim_n\al_n=\all$, and by~\cite[Lemma 5.3.1]{mr02} (or rather by its proof) it follows that there is a canonical isomorphism
$$\varprojlim_n H^1_{\FF_{\al_n}}(k_n,T) \cong H^1_{\FF_{\all}}(k,T\otimes\LL).$$
Furthermore, by Lemma~\ref{lem:iwasawa-selmer-explicit}, $ \varinjlim_n H^1_{\FF_{\cl}^*}(k_n,T^*)=\varinjlim_n A_{L_n}^\chi$. Finally, by Shapiro's lemma $$H^1(k_n,T^*)=H^1(k,T^*\otimes\ZZ_p[\Gamma_n]),$$ where $\Gamma_n=\Gal(k_n/k)$, hence \be\label{nonsense}\varinjlim_n H^1(k_n,T^*)=H^1(k, \varinjlim_n T^*\otimes\ZZ_p[\Gamma_n]).\ee Now, using the fact that the functors $-\otimes_{\ZZ_p}\ZZ_p[\Gamma_n]$ and $\Hom_{\ZZ_p}(\ZZ_p[\Gamma_n],-)$ are adjoint functors (we  drop the subscripts below and write $\otimes$ and Hom for short), it follows that \begin{align*}(T\otimes\LL)^*&:=\Hom(\varprojlim_nT\otimes\ZZ_p[\Gamma_n],\QQ_p/\ZZ_p)(1)\\&\cong\varinjlim_n\Hom(T,\Hom(\ZZ_p[\Gamma_n],\QQ_p/\ZZ_p))(1)\\&\cong\varinjlim_n\Hom(T,\QQ_p/\ZZ_p[\Gamma_n])(1)\\&\cong \varinjlim_n\Hom(T,\QQ_p/\ZZ_p)(1)\otimes\ZZ_p[\Gamma_n]=:\varinjlim_n T^*\otimes\ZZ_p[\Gamma_n],
\end{align*}
where the isomorphism of the modules in the second and the third line comes from  the isomorphism
$$\xymatrix  @C=10pt @R=.1pt{\Hom(\ZZ_p[\Gamma_n],\QQ_p/\ZZ_p) \ar[r]^(.6){\sim}& \QQ_p/\ZZ_p[\Gamma_n]\\
f\ar@{{|}->}[r]& \sum_{\gamma \in \Gamma_n} f(\gamma)\cdot\gamma
}$$
of $\ZZ_p[\Gamma_n]$-modules. This and (\ref{nonsense}) (together with its semi-local analogue) show at once that
$$\varinjlim_n H^1_{\FF_{\al_n}^*}(k_n,T^*)=H^1_{\FF_{\all}^*}(k,(T\otimes\LL)^*).$$
This completes the proof of (i), and (ii) follows trivially from (i).%Note that this map is indeed $\Gamma_n$-equivariant as for any $\gamma_0 \in \Gamma_n$ and $f \in \Hom(\ZZ_p[\Gamma_n].\QQ_p/\ZZ_p)$ , $(\gamma_0\cdot f)(\gamma)=f(\gamma_0^{-1}\gamma)$, for all $\gamma \in \Gamma_n$, therefore $\gamma_0\cdot f$ maps to $\sum_{\gamma \in \Gamma_n} f(\gamma_0^{-1}\gamma)\cdot\gamma$, which equals by writing $$
\end{proof}

\subsection{Kolyvagin systems for modified Selmer groups}
\label{subsec:KS1}
This section closely follows the exposition of~\cite[\S1.2]{kbbstark} and \cite[\S2.5]{kbbiwasawa}.
We assume (A1) and (A2) throughout this section.
\begin{rem}
\label{hypo holds}
It is straightforward to verify that the following hypotheses (which were introduced in~\cite[\S 3.5]{mr02}) hold for $T=\ZZ_p(\chi)$:
\begin{itemize}
\item[($\mathbf{H.1}$)] The residual $\mathbb{F}_p [[G_k]]$-representation $T/pT$ is absolutely irreducible.
\item[($\mathbf{H.2}$)] There is a $\tau \in G_k$ such that $\tau=1 \hbox{ on } \mu _{p ^{\infty}}$ and $T/(\tau -1)T$ is free of rank one over $\ZZ _p$.
\item[($\mathbf{H.3}$)] $H^{0}(k , T/pT)$ = $H^{0}(k , T^{*}[p])$ =  0.
\item[($\mathbf{H.4}$)] $\textup{Hom}_{\mathbb{F} _{p}[[G_k]]}(T/pT, T ^{*}[p])=0$.
\end{itemize}
We remark that the hypothesis $(\mathbf{H.3})$ above is implied by what Mazur and Rubin call $({H.3})$ (c.f.,~\cite[Lemma 3.5.2]{mr02}). The hypothesis (\hthree) above is sufficient for our purposes.
\end{rem}

Let $\PP$ denote the set whose elements are prime ideals of $k$ which are prime to $p\ff_{\chi}$.  For each positive integer $m$ and $n$, let
$$\PP_{m+n}=\{\qq \in \PP: \qq \hbox{ splits completely in } L(\mu_{p^{m+n+1}})/k\}$$
be a subset of $\PP$. Note that $\PP_{m+n}$ is exactly the set of primes determined by~\cite[Definition IV.1.1]{r00} when $T=\ZZ_p(\chi)$. The hypothesis (H.5) of~\cite[\S3.5]{mr02} holds with this choice of $\PP$. Let $\NN=\NN(\PP)$ (resp., $\NN_j=\NN(\PP_j)\subset \NN$) be the square free products of primes $\qq \in \PP$ (resp., in $\PP_j$), with the convention that $1 \in \NN_j\subset \NN$.

Using \cite[Lemma 3.7.1]{mr02}, one may also check that $\FFc$ and $\FF_{\al}$ satisfy the hypothesis (H.6) of~\cite[\S3.5]{mr02}. We may therefore apply the main results of~\cite{mr02}. In particular, the existence of Kolyvagin systems for these Selmer structures will be decided by their \emph{core Selmer ranks} (for a definition, c.f., \cite[Definition 4.1.11 and 5.2.4]{mr02}). Let $\XX(T,\FF)$ denote the core Selmer rank of the Selmer structure $\FF$, for $\FF=\FFc$ or for $\FF=\FF_{\al}$.

\begin{prop}
\label{prop:canonical core selmer rank}
We have $\XX(T,\FFc)=r\,(=[k:\QQ])$.
\end{prop}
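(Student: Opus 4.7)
My plan is to compute $\XX(T,\FFc)$ by means of the Mazur--Rubin formula for the core Selmer rank (cf.~\cite[Thm.~4.1.13 and \S5.2]{mr02}). Once the hypotheses \hone--\hfour, which are verified for $T=\ZZ_p(\chi)$ in Remark~\ref{hypo holds}, are in hand, the Poitou--Tate global Euler characteristic formula combined with the definition of the core rank expresses $\XX(T,\FFc)$ as a sum of local contributions of the form
\begin{equation*}
d_v:=\dim_{\mathbb{F}_p}H^1_{\FFc}(k_v, T/pT)-\dim_{\mathbb{F}_p}H^0(k_v, T/pT)
\end{equation*}
over $v\in\Sigma(\FFc)$, with any global correction absorbed by \hthree. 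It therefore remains to evaluate each $d_v$ and check that only the $p$-term survives.

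At archimedean $v$, since $\chi$ is totally odd and $p$ is odd, complex conjugation acts by $-1$ on $T/pT$, so $H^0(k_v,T/pT)=0=H^1(k_v,T/pT)$ and $d_v=0$. At a finite prime $\lambda\in\Sigma(\FFc)$ with $\lambda\nmid p$, the propagated local condition is the finite (unramified) condition $H^1_\finite(k_\lambda,T/pT)$, whose $\mathbb{F}_p$-dimension coincides with $\dim H^0(k_\lambda,T/pT)$ by the local Euler--Poincar\'e characteristic formula at a prime of residue characteristic different from $p$; thus $d_\lambda=0$ as well.

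The entire contribution therefore comes from the semi-local term at $p$. By Definition~\ref{def:propagation}, $H^1_{\FFc}(k_p,T/pT)$ is the image of $H^1_{\FFc}(k_p,T)=H^1(k_p,T)$ in $H^1(k_p,T/pT)$; by Lemma~\ref{lemma:free for k}, this image equals $H^1(k_p,T)/pH^1(k_p,T)$, an $\mathbb{F}_p$-space of dimension $r$. Meanwhile, hypothesis (A1) ensures that $\chi$ is nontrivial on every decomposition group above $p$, so $H^0(k_\wp,T/pT)=0$ for each $\wp\mid p$. Hence $d_p=r-0=r$, and summing over $v\in\Sigma(\FFc)$ yields $\XX(T,\FFc)=r$. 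The main subtlety --- and the only real obstacle --- is reconciling the integral rank given by Lemma~\ref{lemma:free for k} with the mod-$p$ dimension entering the Mazur--Rubin formula; the $\ZZ_p$-freeness of $H^1(k_p,T)$ together with the transparent propagation recipe for $\FFc$ makes this matching clean.
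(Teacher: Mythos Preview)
Your argument is correct and amounts to reproving, in this special case, the content of \cite[Theorem~5.2.15]{mr02}, which is what the paper invokes in a single line. The paper simply cites that general formula for the core rank of $\FFc$ and reads off the answer from the parity of $\chi$: each of the $r$ real places of $k$ contributes $1$ to the core rank (complex conjugation fixes $T^*=\pmb{\mu}_{p^\infty}\otimes\chi^{-1}$ when $\chi$ is odd), while the $p$-adic contribution in that formula vanishes automatically because $\chi$ has finite order whereas the cyclotomic character does not.

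Your route is more hands-on, computing the Wiles--Greenberg local terms $d_v$ directly. Interestingly, the ``$r$'' surfaces in a different place in your argument---from the $\ZZ_p$-rank of $H^1(k_p,T)$ via Lemma~\ref{lemma:free for k}---whereas in the paper's approach it comes from the $r$ archimedean places; the two are of course reconciled by the local Euler characteristic at $p$. A consequence is that your version leans on (A1) (both through Lemma~\ref{lemma:free for k} and to kill $H^0(k_\wp,T/pT)$), while the paper's citation needs only that $\chi$ is totally odd. One small quibble: at $\lambda\mid\ff_\chi$, $\lambda\nmid p$, the equality $\dim H^1_{\FFc}(k_\lambda,T/pT)=\dim H^0(k_\lambda,T/pT)$ is not really ``the local Euler--Poincar\'e formula''; it comes from \cite[Lemma~3.7.1]{mr02} (identifying the propagated condition with the unramified one) together with the standard fact that $H^0(\hat{\ZZ},-)$ and $H^1(\hat{\ZZ},-)$ of a finite module have equal order. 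The conclusion $d_\lambda=0$ is nonetheless correct.
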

\begin{proof}
This follows from~\cite[Theorem 5.2.15]{mr02}, applied with the base field $k$ (instead of $\QQ$; we therefore have $r$ real places instead of one) and using our assumption that $\chi$ is totally odd.
\end{proof}

\begin{prop}
\label{modifiedcorerank}
The core Selmer rank $\XX(T,\FF_{\al})$ of the Selmer structure $\FF_\al$ on $T$ is one.
\end{prop}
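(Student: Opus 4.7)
The plan is to deduce this by comparing $\FF_{\al}$ with the canonical Selmer structure $\FFc$, whose core Selmer rank is already known to be $r$ by Proposition~\ref{prop:canonical core selmer rank}. The two Selmer structures agree at every place $\lambda \neq p$, and at $p$ the local condition is shrunk from $H^1_{\FFc}(k_p,T)=H^1(k_p,T)$ to the $\ZZ_p$-rank-one direct summand $\al$. By Lemma~\ref{lemma:free for k}, the quotient $H^1(k_p,T)/\al$ is $\ZZ_p$-free of rank $r-1$, so we have tightened the local condition at $p$ through a free quotient of rank $r-1$.

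The core Selmer rank is well-behaved with respect to such local modifications: the general formalism of~\cite[\S 4.1, especially Theorem 4.1.13 and Proposition 4.1.15]{mr02} shows that if $\FF_1\preceq \FF_2$ are two Selmer structures that agree at all places except at one place $v$, where $H^1_{\FF_1}(k_v,T)\subset H^1_{\FF_2}(k_v,T)$ is an inclusion of direct summands whose quotient has $\ZZ_p$-rank $d$, then
$$\XX(T,\FF_2)-\XX(T,\FF_1)=d.$$
Applied to $\FF_1=\FF_{\al}$ and $\FF_2=\FFc$ with $v=p$ and $d=r-1$, this gives $\XX(T,\FF_{\al})=\XX(T,\FFc)-(r-1)=r-(r-1)=1$.

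Before invoking the above comparison, one must verify that $\FF_\al$ satisfies the same structural hypotheses (\hone)--(\hfour) as well as (H.5) and (H.6) from~\cite[\S 3.5]{mr02} that enable the Mazur--Rubin formalism. The hypotheses (\hone)--(\hfour) only involve the Galois module $T$ (not the Selmer structure) and so transfer from Remark~\ref{hypo holds}; (H.5) is a property of the auxiliary set of primes $\PP$ and carries over verbatim; only (H.6) depends on the local conditions themselves, and for $\FF_\al$ it follows since we have chosen $\al$ as a $\ZZ_p$-direct summand of $H^1(k_p,T)$, so that $\al/p\al$ is precisely a subspace of $H^1(k_p,T)/p$, and one applies~\cite[Lemma 3.7.1]{mr02} as was already done for $\FFc$.

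The only step requiring care is confirming that the local comparison formula applies cleanly at the semi-local prime $p$ (rather than at a single prime). This is not a real obstacle: the local condition $H^1(k_p,T)=\bigoplus_{\wp|p}H^1(k_\wp,T)$ is treated as a single local condition in the Mazur--Rubin framework by the conventions laid out in \S\ref{local conditions}, and the rank calculation behaves additively. As an alternative route, one may compute $\XX(T,\FF_\al)$ directly from the exact sequence of Proposition~\ref{prop:compare selmer} together with Proposition~\ref{prop:classical selmer explicit}: the map $\textup{loc}_p$ has image a $\ZZ_p$-submodule of $\al$ of finite index (since $A_L^\chi$ is finite), so $H^1_{\FF_\al}(k,T)$ has $\ZZ_p$-rank $1$ and the dual Selmer group is finite, recovering core rank one. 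Either approach yields $\XX(T,\FF_\al)=1$.
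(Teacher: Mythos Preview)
Your argument is correct and follows the same approach as the paper: the paper simply refers to~\cite[Proposition 1.8]{kbbstark}, whose proof is precisely the comparison-of-core-ranks argument you give, shrinking $\FFc$ to $\FF_\al$ at $p$ through a free quotient of rank $r-1$ and applying the Mazur--Rubin formalism. Your alternative route via Propositions~\ref{prop:compare selmer} and~\ref{prop:classical selmer explicit} is also valid, though one should then invoke~\cite[Corollary 5.2.6]{mr02} (or the analogous statement) to pass from ``$H^1_{\FF_\al}(k,T)$ has $\ZZ_p$-rank one and the dual Selmer group is finite'' to ``core Selmer rank equals one.''
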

\begin{proof}
%By Proposition 1.6 of ~\cite{wiles}
%\begin{align*}
%\textup{length}(H^1_{\FF_{\al}}(k,T/p^nT))&-\textup{length}(H^1_{\FF_{\al}^*}(k,T^*[p^n]))=\\
%\textup{length}(H^0(k,T/p^nT))&-\textup{length}(H^0(k,T^*[p^n]))\\
%-\sum_{\ell \mid f_{\chi}p} \{\textup{length}(H^0(k_{\ell},T/p^nT)&-\textup{length}(H^1_{\FF_{\al}}(k_{\ell},T/p^nT))\}\\
%\end{align*}
%which is $n\cdot\XX(T,\FF_{\al})$. Applying the same formula to $(T,\FFc)$ we see that \begin{align*}
%n(\XX(T,\FFc)-&\XX(T,\FF_{\al}))=\\
%&\textup{length}(H^1_{\FFc}(k_{p},T/p^nT))-\textup{length}(H^1_{\FF_{\al}}(k_{p},T/p^nT))
%\end{align*}
 %and this equals $n(r-1)$ by the very definition of the modified Selmer structure. We already know by Proposition~\ref{prop:canonical core selmer rank} that $\XX(T,\FFc)=r$. The proof follows.
The proof of this Proposition is identical to the proof of~\cite[Proposition 1.8]{kbbstark}.
\end{proof}

\subsubsection{Kolyvagin systems over $k$} We recall the definition of the (generalized) module of Kolyvagin systems (introduced in~\cite{mr02}) for the Selmer triple $(T,\FF_\al,\PP)$:
\begin{define}[Compare to~\cite{mr02} Definition 3.1.6]
\label{def:KSzp}
Define the (generalized) module of Kolyvagin systems $$\overline{\KS}(T,\FF_\al,\PP):=\varprojlim_{s}\varinjlim_{j} \KS(T/p^sT,\FF_\al,\PP_j),$$ where $\KS(T/p^sT,\FF_\al,\PP_j)$ is the module of Kolyvagin systems for the Selmer structure $\FF_\al$ on the representation $T/p^sT$, as in~\cite[Definition 3.1.3]{mr02}.

We will call an element of $\overline{\KS}(T,\FF_\al,\PP)$ an \emph{$\al$-restricted Kolyvagin system} for $T$.
\end{define}

\begin{rem}
\label{rem:choiceoftransverse} In order to define Kolyvagin systems,
one first needs to define the ``transverse local condition''
(see~\cite[Definition 1.1.6(iv)]{mr02}). In this remark, we briefly
recall this definition. Let $F$ be any local field, and fix once and
for all an abelian extension $F^\prime/F$ which is totally and
tamely ramified, and moreover is a maximal such extension. When
$F=\QQ_\ell$, then there is a natural choice for $F^\prime$, namely
$F^\prime=\QQ_\ell(\pmb{\mu}_\ell)$. In general, we simply fix an
extension $F^\prime$ as above and define the \emph{transverse local
condition} to be
$$H^1_{\textup{tr}}(F,X)=\ker\{H^1(F,X) \lra H^1(F^\prime,X)\},$$
for appropriate quotients $X$ of $T$.

Let $\qq \in \PP_j$ for some $j$ and consider now the case
$F=k_{\qq}$. Starting from \S\ref{sec:ESKSmap}, we will insist that
the extension $F^\prime $ contains $k(\qq)_{\qq}$, where $k(\qq)$ is
the maximal $p$-extension inside the ray class field of $k$ modulo
the prime ideal $\qq$. Although we do not need this assumption for
the results in \S\ref{subsec:KS1}, it is necessary to choose
$F^\prime$ in this manner to be able to modify the arguments
of~\cite[Theorem 3.2.4]{mr02} in order to obtain a proof of
Theorem~\ref{thm:ESKSmain} below.
\end{rem}

\begin{prop}
\label{prop:modifiedKSrank1}
The $\ZZ_p$-module $\overline{\KS}(T,\FF_\al,\PP)$ is free of rank \emph{one}. Furthermore, it is generated by a Kolyvagin system $\kappa \in\overline{\KS}(T,\FF_\al,\PP)$ whose image \textup{(}under the canonical map induced from reduction mod $p$\textup{)} inside $\textup{\textbf{KS}}(T/pT,\FF_{\al},\PP)$ is nonzero.
\end{prop}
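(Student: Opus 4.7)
The plan is to deduce this statement as a direct application of the Kolyvagin system existence/rigidity theorem of Mazur--Rubin, once we have arranged the hypotheses needed to invoke it. The key input in our favor is Proposition~\ref{modifiedcorerank}, which tells us that the $\pmb{\al}$-modified Selmer structure has core Selmer rank exactly one; and the machinery of~\cite{mr02} is at its strongest in the rank-one setting.

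Concretely, I would first verify that the Selmer triple $(T,\FF_{\al},\PP)$ satisfies all the standing hypotheses (H.1)--(H.6) of~\cite[\S3.5]{mr02}. The hypotheses (H.1)--(H.4) are already recorded in Remark~\ref{hypo holds} (note that they hold for $T=\ZZ_p(\chi)$ regardless of the choice of local conditions at $p$). The hypothesis (H.5) is the Chebotarev-type statement about the existence of enough useful primes in $\PP_{m+n}$, which is built into our choice of $\PP_{m+n}$ (and is the same set used in~\cite[Definition IV.1.1]{r00}); and (H.6) follows by the standard verification via~\cite[Lemma 3.7.1]{mr02}, exactly as for $\FFc$ (the local condition at $p$ does not enter the argument for (H.6), which concerns only the primes $\qq\in\PP$).

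With these hypotheses in place and with $\XX(T,\FF_{\al})=1$, I would invoke~\cite[Theorem 5.2.10]{mr02} for each level $s$ and each $\PP_j$: it yields that $\KS(T/p^sT,\FF_{\al},\PP_j)$ is a free module of rank one over $\ZZ/p^s\ZZ$. The statement of this theorem is formulated over $\QQ$ in loc.\,cit., but all arguments involved are purely Galois-cohomological and depend only on (H.1)--(H.6) and on the core rank being one; hence they go through verbatim over our totally real base $k$. Passing first to the direct limit over $j$ (the transition maps are surjections induced by restricting $\NN_j\hookleftarrow\NN_{j'}$ for $j\le j'$, again by~\cite[Theorem 5.2.10]{mr02}), and then to the inverse limit over $s$ (the transition maps are compatible by construction of the Kolyvagin system module at finite level), one obtains that $\overline{\KS}(T,\FF_{\al},\PP)$ is free of rank one over $\ZZ_p$.

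For the second assertion, I would pick a generator $\overline{\kappa}$ of the one-dimensional $\FF_p$-vector space $\KS(T/pT,\FF_{\al},\PP):=\varinjlim_j\KS(T/pT,\FF_{\al},\PP_j)$ and lift it to $\overline{\KS}(T,\FF_{\al},\PP)$ using the surjectivity of reduction mod $p$, which is a consequence of Nakayama's lemma applied to the freeness established above together with the fact that the reduction map sends any $\ZZ_p$-generator to a nonzero element of the rank-one quotient. The main technical obstacle that I would expect is making sure that the inverse limit over $s$ commutes with the direct limit over $j$ in the right way, so that the generators at finite level fit together into a single element of $\overline{\KS}(T,\FF_{\al},\PP)$ whose mod-$p$ image is the prescribed nonzero $\overline{\kappa}$; this is the point where the compatibilities in~\cite[\S3.1]{mr02} have to be chased carefully, but it is formal once one knows that each $\KS(T/p^sT,\FF_{\al},\PP_j)$ is free of rank one and that the transition maps are surjective.
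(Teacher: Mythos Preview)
Your proposal is correct and follows essentially the same route as the paper: verify hypotheses (H.1)--(H.6) for the triple $(T,\FF_{\al},\PP)$, use Proposition~\ref{modifiedcorerank} to get core Selmer rank one, and then invoke \cite[Theorem~5.2.10]{mr02}. The only difference is that you unwind the passage to the double limit over $s$ and $j$ by hand, whereas the paper (and \cite[Theorem~5.2.10]{mr02} itself) already packages this limit argument into the statement for $\overline{\KS}$; your extra detail is not wrong, just not needed at the level of invoking the theorem.
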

A generator of the cyclic module $\overline{\KS}(T,\FF_\al,\PP)$ will be called a~\emph{primitive} Kolyvagin system.
\begin{proof}
This is immediate after Proposition~\ref{modifiedcorerank} and \cite[Theorem 5.2.10]{mr02}. To apply Theorem 5.2.10 of loc.cit., one needs to verify that the hypotheses (H.1)-(H.6)  of~\cite[\S3.5]{mr02} hold true for the triple $(T,\FF_\al,\PP)$.
\end{proof}
\begin{rem}
\label{rem:barvsnobar}
Using  Proposition~\ref{modifiedcorerank} and \cite[Proposition 5.2.9]{mr02}, the generalized module of Kolyvagin systems $\overline{\KS}(T,\FF_\al,\PP)$ may be identified by the module of of Kolyvagin systems ${\KS}(T,\FF_\al,\PP)$ (defined as in~\cite[Definition 3.1.3]{mr02}). We will use this identification without warning.
\end{rem}

We record here the main application of a Kolyvagin system for the Selmer triple $(T,\FF_\al,\PP)$. Suppose $\left\{\{\kappa_\tau(s)\}_{\tau\in \NN_s}\right\}_{_{s}}=\kappa \in\overline{\KS}(T,\FF_\al,\PP)$ is any Kolyvagin system. See~\cite[\S3]{mr02} for an explanation of our notation. We loosely say here that $\kappa_\tau(s) \in H^1(k,T/p^sT)$, and by definition, there is a well defined element $$\kappa_1=\{\kappa_1(s)\}_{s} \in \varprojlim_{s} H^1_{\FF_{\al}}(k,T/p^sT)=H^1_{\FF_{\al}}(k,T).$$

\begin{thm}\cite[Theorem 5.2.13 and 5.2.14]{mr02}
\label{thm:mainksapplicationk}
Under our running hypotheses
\begin{enumerate}
\item[(i)] $\textup{length}(H^1_{\FF_{\al}^*}(k,T^*))\leq \textup{length}(H^1_{\FF_{\al}}(k,T)/\ZZ_p\cdot\kappa_1),$
\item[(ii)] the inequality in \textup{(i)} is an equality if and only if $\kappa$ is primitive.
\end{enumerate}
\end{thm}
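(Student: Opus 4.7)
The plan is to deduce this theorem directly from the Mazur--Rubin machinery of \cite[Theorem 5.2.13 and 5.2.14]{mr02} once the required hypotheses are verified for the Selmer triple $(T,\FF_\al,\PP)$. Hypotheses (H.1)--(H.4) are recorded in Remark~\ref{hypo holds}; (H.5) is built into the choice of $\PP$ and the sub-families $\PP_j$; and (H.6) for $\FF_\al$ follows from \cite[Lemma 3.7.1]{mr02}, since the modification at $p$ (Definition~\ref{def:line}) only strengthens the local condition relative to $\FFc$. Combined with Proposition~\ref{modifiedcorerank}, which yields $\XX(T,\FF_\al)=1$, this places us squarely in the setting of \cite[\S5.2]{mr02}, where the core rank one theory produces sharp bounds.

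For part (i), the idea is the standard Kolyvagin induction: starting from $\kappa_1\in H^1_{\FF_\al}(k,T)$, one produces via Chebotarev a sequence of primes $\qq_1,\qq_2,\ldots\in\PP$ such that the derivative classes $\kappa_{\qq_1\cdots\qq_i}$ successively annihilate elements of the dual Selmer group $H^1_{\FF_\al^*}(k,T^*)$. At each step the Selmer structure is modified from $\FF_\al$ to $\FF_\al(\tau)$ by swapping the unramified local condition at $\qq|\tau$ with the transverse condition; the key identity relating $\kappa_\tau$ to $\kappa_{\tau\qq}$ through the local finite/transverse decomposition (cf.\ \cite[Theorem~4.5.1]{mr02}) forces a strict drop in the length of the dual Selmer group at each step, while the denominator $\ZZ_p\cdot\kappa_\tau$ grows by exactly the same factor. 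Iterating bounds $\textup{length}(H^1_{\FF_\al^*}(k,T^*))$ by $\textup{length}(H^1_{\FF_\al}(k,T)/\ZZ_p\cdot\kappa_1)$.

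For part (ii), the equivalence with primitivity comes from tracking when the above inductive step is sharp. By Proposition~\ref{prop:modifiedKSrank1} the module $\overline{\KS}(T,\FF_\al,\PP)$ is free of rank one, so $\kappa$ is primitive exactly when its reduction mod $p$ is nonzero; equivalently, $\kappa\notin p\cdot\overline{\KS}(T,\FF_\al,\PP)$. If $\kappa$ is primitive, then at each inductive step one can choose the Chebotarev prime $\qq_{i+1}$ so that $\kappa_{\qq_1\cdots\qq_{i+1}}$ is nonvanishing mod $p$ in the relevant cohomology, giving equality. Conversely, if $\kappa = p\cdot\kappa'$ for some Kolyvagin system $\kappa'$, then $\kappa_1\in p\cdot H^1_{\FF_\al}(k,T)$ enlarges the right-hand side of (i) while leaving $H^1_{\FF_\al^*}(k,T^*)$ unchanged, so the inequality is strict.

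The main subtlety I expect is not in invoking \cite{mr02} but in ensuring that the transverse local condition we need at auxiliary primes $\qq\in\PP_j$ is the \emph{same} as the one used in \cite{mr02}, since we have imposed the extra compatibility requirement in Remark~\ref{rem:choiceoftransverse} that $F'\supset k(\qq)_\qq$. This compatibility is automatic for the current theorem because the Kolyvagin-system formalism is insensitive to the specific choice of $F'$ (any maximal tamely totally ramified abelian extension works), but it becomes essential later in \S\ref{sec:ESKSmap} when we pass from Euler systems to Kolyvagin systems. It therefore costs us nothing here but must be recorded for consistency downstream.
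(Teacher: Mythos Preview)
Your proposal is correct and takes essentially the same approach as the paper: the paper gives no proof at all for this theorem, simply citing \cite[Theorem 5.2.13 and 5.2.14]{mr02} in the statement header, having already verified (H.1)--(H.6) and $\XX(T,\FF_\al)=1$ in the preceding discussion. Your additional sketch of the Kolyvagin induction and the primitivity characterization is extra exposition beyond what the paper provides, but it is accurate and does no harm.
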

\begin{rem}
\label{rem:question}
Note that the \emph{choice} of a rank one direct summand $\al \subset H^1(k_p,T)$ makes our approach somewhat  unnatural. We address this issue in this remark. Put
\be\label{eqn:decomposeH1} H^1(k_p,T)=\bigoplus_{i=1}^{r}\al_i\ee
(where each $\al_i$ is a free $\ZZ_p$-submodule of $H^1(k_p,T)$ of rank one) and consider \be\label{eqn:sum} \sum_{i=1}^{r}\KS(T,\FF_{\al_i},\PP)\subset \KS(T,\FFc,\PP).\ee
\begin{claim}
The sum in \textup{(}\ref{eqn:sum}\textup{)} is a direct sum.
\end{claim}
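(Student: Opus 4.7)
My plan is to show directness by separating the contributions via localization at $p$, using the decomposition (\ref{eqn:decomposeH1}). Assume a relation $\sum_{i=1}^{r}\kappa^{(i)}=0$ holds in $\KS(T,\FFc,\PP)$ with each $\kappa^{(i)}\in\KS(T,\FF_{\al_i},\PP)$; the goal is to deduce $\kappa^{(i)}=0$ for every $i$.

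For each $\tau\in\NN$ and level $s\geq 1$, the relation yields $\sum_i\kappa^{(i)}_\tau(s)=0$ in $H^1(k,T/p^sT)\otimes G_\tau$. Since $\tau$ is coprime to $p$, the transverse modification at $\tau$ leaves the $\FF_{\al_i}$-local condition at $p$ unchanged, so each $\textup{loc}_p(\kappa^{(i)}_\tau(s))$ lies in $\al_i\otimes_{\ZZ_p}\ZZ/p^s\otimes G_\tau\subset H^1(k_p,T/p^sT)\otimes G_\tau$. Base-changing the decomposition (\ref{eqn:decomposeH1}) modulo $p^s$ shows that these $r$ submodules remain internally direct inside $H^1(k_p,T/p^sT)\otimes G_\tau$, so the relation $\sum_i\textup{loc}_p(\kappa^{(i)}_\tau(s))=0$ forces each summand $\textup{loc}_p(\kappa^{(i)}_\tau(s))$ to vanish individually.

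Specializing to $\tau=1$ and passing to the inverse limit in $s$ yields $\textup{loc}_p(\kappa^{(i)}_1)=0$ in $\al_i$ for each $i$. By the first exact sequence preceding Proposition~\ref{prop:compare selmer}, combined with the vanishing $H^1_{\FF_{\cl}}(k,T)=0$ from Proposition~\ref{prop:classical selmer explicit}, the map $\textup{loc}_p:H^1_{\FF_{\al_i}}(k,T)\hookrightarrow\al_i$ is injective, so $\kappa^{(i)}_1=0$. By Proposition~\ref{prop:modifiedKSrank1}, $\KS(T,\FF_{\al_i},\PP)$ is free of rank one over $\ZZ_p$, so we may write $\kappa^{(i)}=c_i\kappa^{(i)}_*$ for the primitive generator $\kappa^{(i)}_*$ and some $c_i\in\ZZ_p$. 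Since $H^1_{\FF_{\al_i}}(k,T)$ embeds into the torsion-free module $\al_i\cong\ZZ_p$, the equation $c_i\kappa^{(i)}_{*,1}=0$ forces either $c_i=0$ (which is what we want) or $\kappa^{(i)}_{*,1}=0$.

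I expect the main obstacle to be the latter exceptional situation, where the initial term cannot detect the scalar $c_i$. One should handle it by repeating the localization-and-splitting argument of the second paragraph for some $\tau_0\in\NN$ with $\kappa^{(i)}_{*,\tau_0}\neq 0$ (such $\tau_0$ exists since $\kappa^{(i)}_*\neq 0$), and establishing the analogue of the injectivity statement for the transverse-augmented Selmer structure $\FF_{\al_i}(\tau_0)$. This should follow by applying Poitou--Tate duality to the companion structure $\FF_{\cl}(\tau_0)$ in the same spirit as Corollary~\ref{cor:compare over k with class}, thereby forcing $c_i=0$ and hence $\kappa^{(i)}=0$ in all cases.
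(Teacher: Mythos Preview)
Your argument is on track through the third paragraph: localizing at $p$, using the direct-sum decomposition modulo $p^s$, and invoking the injectivity of $\textup{loc}_p$ on $H^1_{\FF_{\al_i}}(k,T)$ to deduce $\kappa^{(i)}_1=0$ is exactly what the paper does (though the paper argues only at $\tau=1$). The gap is in your fourth paragraph, where you try to handle the exceptional case $\kappa^{(i)}_{*,1}=0$ by moving to some $\tau_0\neq 1$ and reproving injectivity of $\textup{loc}_p$ for the transverse-augmented structure $\FF_{\al_i}(\tau_0)$. This injectivity is not available: the kernel of $\textup{loc}_p$ on $H^1_{\FF_{\al_i}(\tau_0)}(k,T/p^sT)$ is a Selmer group with \emph{strict} condition at $p$ and \emph{transverse} conditions at the primes dividing $\tau_0$, and there is no reason this vanishes (the analogue of Proposition~\ref{prop:classical selmer explicit} holds only at the integral level $T$ and only for the unmodified $\FF_{\cl}$). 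Moreover, for $\tau_0\neq 1$ the classes $\kappa_{\tau_0}(s)$ exist only for bounded $s$, so you cannot pass to an inverse limit to recover a torsion-free target as you did for $\tau=1$.

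The paper avoids this difficulty entirely by observing that the exceptional case never occurs. Since $H^1_{\FF_{\al_i}^*}(k,T^*)$ is finite (it is a quotient of $(A_L^{\chi})^{\vee}$ by Proposition~\ref{prop:compare selmer}), \cite[Theorem 5.2.12(v)]{mr02} guarantees that every nonzero element $\pmb{\kappa}\in\KS(T,\FF_{\al_i},\PP)$ has $\kappa_1\neq 0$. Hence a primitive generator automatically satisfies $\kappa^{(i)}_{*,1}\neq 0$, and your dichotomy collapses to $c_i=0$. Replacing your fourth paragraph with this single citation closes the proof.
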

\begin{proof}
Assume contrary: Suppose $0\neq \pmb{\kappa}^i \in \KS(T,\FF_{\al_i},\PP)$ (for $i=1,\dots,r$) is such that $$\sum_{i=1}^{r}a_i\pmb{\kappa}^i=0$$ for some $a_i \in \ZZ_p$, and $a_{i_0}\neq 0$ for a certain $1\leq i_0\leq r$. This means
\be\label{eqn:linesequality}a_{i_0}\pmb{\kappa}^{i_0}=-\sum_{\substack{i=1\\i\neq i_0}}^{r}a_{i}\pmb{\kappa}^i \in \sum_{\substack{i=1\\i\neq i_0}}^{r} \KS(T,\FF_{\al_i},\PP).\ee
 Write $\pmb{\kappa}^{i_0}=\{\kappa_n^{i_0}\}$ (see \cite[\S3]{mr02} for a precise definition of a Kolyvagin system to clarify this notation, see also Remark~\ref{rem:choiceoftransverse} below). Equation (\ref{eqn:linesequality}) therefore shows that
 \be\label{eqn:wronglines}
 \textup{loc}_p(a_{i_0}{\kappa}^{i_0}_1) \in \bigoplus_{\substack{i=1 \\i \neq i_0}}^{r} \al_i.
 \ee
 Also, by definition, $\textup{loc}_p(a_{i_0}\kappa^{i_0}_1) \in \al_{i_0}$; using this together with~(\ref{eqn:wronglines}) we conclude that $\textup{loc}_p(a_{i_0}\kappa_1^{i_0})=0$. The injectivity of $\textup{loc}_p$ (which we checked in~\S\ref{subsubsec:compareselmer1}) gives $a_{i_0}\kappa_1^{i_0}=0$.

On the other hand, Proposition~\ref{prop:compare selmer} (applied with $\al=\al_{i_0}$) shows that $H^1_{\FF_{\al_{i_0}}^*}(k,T^*)$ is finite (as the finite group $H^1_{\FF_{\textup{cl}}^*}(k,T^*)^\vee=(A_L^{\chi})^{\vee}$ surjects onto its Pontryagin dual). This in return shows, using~\cite[5.2.12(v)]{mr02}, that for any  $0\neq\pmb{\kappa}=\{\kappa_n\} \in \KS(T,\FF_{\al_{i_0}},\PP)$, we have $\kappa_1\neq0$. Therefore, $a_{i_0}\kappa_1^{i_0}=0$ implies that $a_{i_0}\kappa^{i_0}=0$, a contradiction.
\end{proof}
Note that, in order to prove the Claim above,  we used the facts that $\textup{loc}_p$ is injective (on $H^1_{\FF_{\al_{i_0}}}(k,T)$) and that $H^1_{\FF_{\al_{i_0}}^*}(k,T^*)$ is finite in our current setting. With a bit more work, it is possible to prove this Claim without having either of these conditions. We leave the more general proof aside not to digress from the main point of our paper any further.

It would be very interesting to have an answer for the following: \\
\textbf{Question:} Is the direct sum $$\bigoplus_{i=1}^{r}\KS(T,\FF_{\al_i},\PP)\subset \KS(T,\FFc,\PP)$$ independent of the choice of the decomposition (\ref{eqn:decomposeH1})?

When the answer to this question is affirmative, we would have a \emph{canonically} defined rank-$r$ submodule of $\KS(T,\FFc,\PP)$. It would be even more tempting to inquire whether this rank-$r$ submodule descends from Euler systems. In \S3 below, we construct a rank $r$ submodule of $ \KS(T,\FFc,\PP)$ out of Stickelberger elements; which still does depend on the decomposition~(\ref{eqn:decomposeH1}).

These questions seem to be out of reach in the current state of the art. We may, however, prove the following weaker (yet still interesting) statement. First, we recall some terminology from~\cite{mr02}.

Define the module of $L$-values
\begin{align*}\mathbf{LV}=&\,\textbf{LV}(T;\{\al_i\}_{_{i=1}}^{^{r}}):=\\ &\textup{span}_{\ZZ_p}(\kappa_1: \pmb{\kappa}\in\KS(T,\FF_{\al_i},\PP) \hbox{ for some } i) \subset H^1_{\FFc}(k,T).\end{align*} (Compare this definition with~\cite[Definition 3.1.5]{mr02}.) Note that the $\ZZ_p$-module $\mathbf{LV}$ depends a priori on the choice of the decomposition~(\ref{eqn:decomposeH1}).
\begin{thm}
\label{thm:independent}
The module of $L$-values $\mathbf{LV}$ is independent of the choice of the decomposition~\textup{(}\ref{eqn:decomposeH1}\textup{)}.
\end{thm}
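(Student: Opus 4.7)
The plan is to identify $\textup{loc}_p(\mathbf{LV})$ with a manifestly decomposition-independent submodule of $H^1(k_p,T)$, and then to recover $\mathbf{LV}$ itself via injectivity of the localization map on $H^1_{\FFc}(k,T)$.

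First, I would note that by Proposition~\ref{prop:classical selmer explicit} the classical Selmer group $H^1_{\FF_{\textup{cl}}}(k,T)$ vanishes, so the top exact sequence of Remark~\ref{rem:compare classical with can} shows that $\textup{loc}_p\colon H^1_{\FFc}(k,T)\hookrightarrow H^1(k_p,T)$ is injective. Since $\mathbf{LV}\subset H^1_{\FFc}(k,T)$ by construction, the theorem reduces to computing the image $\textup{loc}_p(\mathbf{LV})$ and observing that the answer depends only on $T$.

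Next, fix an index $i$ and choose a primitive generator $\pmb{\kappa}^{(i)}$ of the cyclic module $\KS(T,\FF_{\al_i},\PP)$ supplied by Proposition~\ref{prop:modifiedKSrank1}. Proposition~\ref{prop:compare selmer}, together with Proposition~\ref{prop:classical selmer explicit}, realises $\textup{loc}_p(H^1_{\FF_{\al_i}}(k,T))$ as the kernel of a map $\al_i\to (A_L^\chi)^\vee$; since $A_L^\chi$ and $H^1_{\FF_{\al_i}^*}(k,T^*)$ are both finite $p$-groups (the latter as established in the proof of the Claim preceding the theorem), one obtains
\[
\bigl[\al_i:\textup{loc}_p(H^1_{\FF_{\al_i}}(k,T))\bigr]=|A_L^\chi|/|H^1_{\FF_{\al_i}^*}(k,T^*)|.
\]
The \emph{equality} clause of Theorem~\ref{thm:mainksapplicationk}(ii), available precisely because $\pmb{\kappa}^{(i)}$ is primitive, contributes
\[
\bigl[H^1_{\FF_{\al_i}}(k,T):\ZZ_p\cdot\kappa_1^{(i)}\bigr]=|H^1_{\FF_{\al_i}^*}(k,T^*)|,
\]
and multiplying the two indices yields $\ZZ_p\cdot\textup{loc}_p(\kappa_1^{(i)})=|A_L^\chi|\cdot\al_i$.

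Finally, cyclicity of $\KS(T,\FF_{\al_i},\PP)$ and summation over $i$ will give
\[
\textup{loc}_p(\mathbf{LV})=\sum_{i=1}^{r}\ZZ_p\cdot\textup{loc}_p(\kappa_1^{(i)})=\sum_{i=1}^{r}|A_L^\chi|\cdot\al_i=|A_L^\chi|\cdot H^1(k_p,T),
\]
a submodule of $H^1(k_p,T)$ that plainly makes no reference to the decomposition~(\ref{eqn:decomposeH1}). Combined with the injectivity of $\textup{loc}_p$ on $H^1_{\FFc}(k,T)$, this forces
\[
\mathbf{LV}=\textup{loc}_p^{-1}\bigl(|A_L^\chi|\cdot H^1(k_p,T)\bigr)\cap H^1_{\FFc}(k,T),
\]
a description which is manifestly independent of the choice of $\{\al_i\}$. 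I expect the delicate point to be the index identity in the middle paragraph: it hinges crucially on the equality clause of Theorem~\ref{thm:mainksapplicationk}(ii), which would fail without primitivity of $\pmb{\kappa}^{(i)}$, and on the finiteness of $H^1_{\FF_{\al_i}^*}(k,T^*)$, which itself rests on the classical Selmer group computation in Proposition~\ref{prop:classical selmer explicit}.
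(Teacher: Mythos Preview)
Your argument is correct and in fact yields a sharper statement than the paper's: you compute $\textup{loc}_p(\mathbf{LV})=|A_L^\chi|\cdot H^1(k_p,T)$ explicitly, and then invoke injectivity of $\textup{loc}_p$ on $H^1_{\FFc}(k,T)$ to conclude. The paper proceeds differently. It fixes the decomposition $\{\al_i\}$, takes an \emph{arbitrary} rank-one summand $\al\subset H^1(k_p,T)$ and a generator $\pmb{\kappa}^{\al}$ of $\KS(T,\FF_{\al},\PP)$, and shows $\kappa_1^{\al}\in\mathbf{LV}$ by writing $\textup{loc}_p(\kappa_1^{\al})=\sum a_i\ell_i$, proving $\textup{ord}_p(a_i)\geq\textup{ord}_p(|A_L^\chi|)$ via the \emph{inequality} of Theorem~\ref{thm:mainksapplicationk}(i), and then using the equality case to exhibit classes $\kappa_1^i$ with $\textup{loc}_p(\kappa_1^i)=a_i\ell_i$; independence then follows by symmetry between two decompositions. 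Both routes rest on the same ingredients---Proposition~\ref{prop:compare selmer}, the equality in Theorem~\ref{thm:mainksapplicationk}(ii) for primitive Kolyvagin systems, and injectivity of $\textup{loc}_p$---but your index computation is more direct and delivers the explicit description $\mathbf{LV}=\textup{loc}_p^{-1}\bigl(|A_L^\chi|\cdot H^1(k_p,T)\bigr)$ inside $H^1_{\FFc}(k,T)$, which the paper's containment argument does not make explicit.
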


\begin{proof}
Fix any generator $\ell_i$ of the free $\ZZ_p$-module $\al_i$ of rank one. Suppose $\al \subset H^1(k_p,T)$ is any rank one direct summand (not necessarily one of $\al_i$ which appear in (\ref{eqn:decomposeH1})). Let $\pmb{\kappa}^{\al}=\{\kappa_n^{\al}\}$ be any generator of the cyclic $\ZZ_p$-module $\KS(T,\FF_{\al},\PP)$. To prove the Theorem, it suffices to show that $\kappa_1^{\al} \in \mathbf{LV}.$ We may write $$\textup{loc}_p(\kappa_1^{\al})=\sum_{i=1}^ra_i\ell_i$$ with $a_i \in \ZZ_p$.
\begin{claim}
Let $a_i$ be as above. Then $$\textup{ord}_p(a_i)\geq \textup{ord}_p(\#H^1_{\FF_{\textup{cl}}^*}(k,T^*))$$ for all $1\leq i \leq r$.
\end{claim}
\begin{proof}[Proof of the Claim] Let $d=\textup{gcd}(a_1,\dots,a_r)$ and set $\alpha_i:=\frac{a_i}{d} \in \ZZ_p$. By definition, at least  one of the $\alpha_i$ is a $p$-adic unit. We also set $$x:=\textup{loc}_p(\kappa_1^{\al})=\sum_i^r a_i\ell_i,\hbox{ and, } y=\frac{x}{d}=\sum_i^r\alpha_i\ell_i.$$
\begin{enumerate}
\item Since $d\cdot y =x \in \al$ and $H^1(k_p,T)/\al$ is $\ZZ_p$-torsion free, it follows that $y \in \al$.
\item $H^1(k_p,T)/\ZZ_p y$ is $\ZZ_p$-torsion free; indeed suppose $$z=\sum_i^r\beta_i\ell_i\in \bigoplus_i^r\al_i=H^1(k_p,T)$$ is such that $m\cdot z \in \ZZ_p y$ for some $m\in\ZZ_p$. This means $$\sum_i^r m\beta_i\ell_i=sy=\sum_i^rs\alpha_i\ell_i $$ for some $s\in\ZZ_p$, hence $m\beta_i=s\alpha_i$, in particular $m|s\alpha_i$ for every $1\leq i\leq r$. Since $\textup{gcd}(\alpha_1,\dots,\alpha_r)=1$, it follows that $m|s$, hence $z=\frac{s}{m}y \in \ZZ_py$.
\item The items (1) and (2) together show that $\al=\ZZ_py$.
\end{enumerate}
We may now conclude that $$\#\al/\ZZ_p\textup{loc}_p(\kappa_1^{\al})=\#\left(\ZZ_p y/\ZZ_p x\right)=p^{\textup{ord}_p(d)},$$ with $d$ as above. On the other hand, Corollary~\ref{cor:compare over k with class} shows that $$\#\frac{\al}{\ZZ_p\textup{loc}_p(\kappa_1^{\al})}\geq \#H^1_{\FF_{\textup{cl}}^*}(k,T^*) \iff \#\frac{H^1_{\FF_{\al}}(k,T)}{\ZZ_p\kappa_1^{\al}}\geq \#H^1_{\FF_{\al}^*}(k,T^*).$$
The latter statement is the main application of the Kolyvagin system $\kappa^{\al}$ (c.f., Theorem~\ref{thm:mainksapplicationk} above). We therefore conclude that $$p^{\textup{ord}_p(d)}\geq \#H^1_{\FF_{\textup{cl}}^*}(k,T^*),$$ which is our Claim.
\end{proof}
We now prove that Theorem~\ref{thm:independent} follows from this Claim. As in the final paragraph of the proof of the Claim above, it follows from Corollary~\ref{cor:compare over k with class} and \cite[Theorem 5.2.10 and 5.2.14]{mr02} that there exists a Kolyvagin system $\tilde{\pmb \kappa}^i \in \KS(T,\FF_{\al_i},\PP)$ such that $$\textup{loc}_p(\tilde{\kappa}^i_1)=\#H^1_{\FF_{\textup{cl}}^*}(k,T^*)\cdot\ell_i \in \al_i,$$ for every $i=1,\dots,r$. By the Claim above, there is  a Kolyvagin system $\pmb{\kappa}^i \in  \KS(T,\FF_{\al_i},\PP)$ such that $\textup{loc}_p(\kappa^i _1)=a_i\ell_i$ (just set $\pmb{\kappa}^i=\frac{a_i}{\#H^1_{\FF_{\textup{cl}}^*}(k,T^*)}\tilde{\pmb \kappa}^{i}$). We therefore have $$\textup{loc}_p(\kappa^{\al}_1)=\sum_i^r\textup{loc}_p(\kappa^{i}_1),$$ and since the map $\textup{loc}_p$ is injective in our setting, it follows that $$\kappa^{\al}_1=\sum_i^r\kappa^{i}_1 \in \textbf{LV},$$ as desired.
\end{proof}
We close our Remark noting that all this discussion applies equally well in the setting of~\cite{kbbstark,kbbiwasawa} as long as we assume Leopoldt's conjecture (i.e., the injectivity of $\textup{loc}_p$ in the setting of loc.cit.).
\end{rem}

\subsubsection{Kolyvagin systems over $k_\infty$}
\label{subsubsec:kolsyslambda1}

We start with the observation that the following versions of the hypotheses \htam\, and \hsez\, of~\cite[\S2.2]{kbb} hold for $T$:
\begin{enumerate}
\item[{}](\htamF)\, $(T\otimes\QQ_p/\ZZ_p) ^{\mathcal{I}_{\lambda}}$ is divisible for every prime $\lambda \nmid p$, $\lambda \subset k$.
\item[{}](\hsezF)\,  $H^0(k_\wp,T^*)=0$ for primes $\wp|p$.
\end{enumerate}

We define a Selmer structure $\FFc^\LL$ on certain quotients of $T\otimes\LL$. This is the Selmer structure $\FFc$ of~\cite[Definition 2.2]{kbb}:
\begin{define}
\label{def:kbbcanonicalselmer}
 Suppose $f \in \LL$ is any \emph{distinguished} polynomial, in the sense that the quotient $\LL/(f)$ is a free $\ZZ_p$-module of finite rank. Let $\FFc^\LL$ be the following  Selmer structure on $T_f:=T \otimes \Lambda /(f)$:
\begin{itemize}
\item  $\Sigma(\FFc^\LL)=\Sigma(\FF_{\LL})$,

\item The local conditions are given by $$H ^{1} _{\FFc^\LL}(k_{\lambda}, T_f)= \left \{
\begin{array}{ccc}
     H ^{1} (k_\lambda \,, T_f)& ,& \hbox{if } \lambda|p   \\\\
          H ^{1}_{\finite}(k _{\lambda},T_f)& ,& \hbox{if }\lambda \in \Sigma(\FFc) \hbox{ and } \lambda \nmid p \\
\end{array}
\right.$$
\end{itemize}
with $$H ^{1}_{\finite} (k_\lambda, T_f) = \hbox{ker} \{ H ^{1} (k_\lambda, T_f) \lra H^{1} (k_\lambda^{\unr} , T _f \otimes \QQ_p) \},$$ where $k_\lambda^{\unr}$ is the maximal unramified extension of $k_\lambda$.

The induced Selmer structure on the quotients $T \otimes \Lambda / (p^s,f)$, which is obtained by \emph{propagating}  $\FFc^\LL$ (in the sense of Definition~\ref{def:propagation}) will also be denoted by $\FFc^\LL$.

\end{define}

Let $T_{s,m}:=T\otimes\LL/(p^s,\xx^m)$, where $\xx$ is as in \S\ref{subsub:lockinfty}.

\begin{rem}
\label{rem:manyselmerstructuresagree}
By the definition of $\FF_{\all}$, the local conditions on $T_{s,m}$ at primes $\lambda\nmid p$ propagated from $\FF_{\all}$ coincide with the local conditions propagated from $\FF_\LL$; and thanks to \cite[Corollary 2.8 and 2.9]{kbb}, they also coincide with the local conditions determined by $\FFc^\LL$, since  \htamF\, holds true. Indeed, it is proved in loc.cit. that all these local conditions coincide with $$H^1_{\unr}(k_\lambda,T_{s,m}):=\ker\left\{H^1(k_\lambda,T_{s,m}) \lra H^1(k_\lambda^{\unr},T_{s,m})\right\},$$ as long as the hypothesis \htamF\, holds true. We note further that $\FF_{\all}$ propagates to the Selmer structure $\FF_\al$ on $T=T\otimes\LL/(\xx)$.
\end{rem}
\begin{define}(Compare with~\cite[Definition 3.1.6]{mr02})
\label{def:KSboth}
We define the module of \emph{$\all$-restricted $\LL$-adic Kolyvagin systems} to be
$$\overline{\KS}(T\otimes\LL,\FF_{\all},\PP):=\varprojlim_{s,m}\varinjlim_{j} \KS(T_{s,m},\FF_{\all},\PP_j),$$ where $\KS(T_{s,m},\FF_{\all},\PP_j)$ is the module of Kolyvagin systems for the Selmer structure $\FF_{\all}$ on the representation $T_{s,m}$.
\end{define}

% As a corollary of Proposition~\ref{prop:cart-L} (and the proof of~\cite{kbb} Theorem 3.23) we obtain
\begin{thm}
\label{main-stark}
Suppose \htamF\, and \hsezF\, hold true. Then $\LL$-module  $\overline{\KS}(T\otimes\LL,\FF_{\all},\PP)$  is free of rank one, and the canonical map
$$\overline{\KS}(T\otimes\LL,\FF_{\all},\PP) \lra \overline{\KS}(T,\FF_{\mathcal{L}},\PP)$$
 is surjective.
\end{thm}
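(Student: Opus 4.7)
The plan is to adapt the structure theorem for $\LL$-adic Kolyvagin systems proved in \cite{kbb} for the canonical Selmer structure $\FFc^\LL$ to the $\all$-modified structure $\FF_{\all}$. First, I would verify at each finite level $T_{s,m}=T\otimes\LL/(p^s,\xx^m)$ that the hypotheses (\hone)--(\hfour) of \cite{mr02}, together with (H.5) and (H.6), hold for the propagated Selmer structure. For the local conditions away from $p$, this is essentially immediate from Remark \ref{rem:manyselmerstructuresagree}, where it is noted that the propagation of $\FF_{\all}$ agrees with the unramified condition used for $\FFc^\LL$ once (\htamF) is assumed; the local condition at $p$ changes only by restriction to the rank-one $\LL$-direct summand $\all$.

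Next, I would compute the core Selmer rank of the propagated structure on $T_{s,m}$. By Lemma \ref{lemma:free for k_infty}, $H^1(k_p, T\otimes\LL)$ is free of rank $r$ over $\LL$ and $\all$ is a rank-one direct summand, so after propagation one obtains a rank-one local condition at $p$ inside a rank-$r$ semi-local cohomology group (of the appropriate quotient). Exactly as in Proposition \ref{modifiedcorerank} (which is the argument of \cite[Proposition 1.8]{kbbstark}), combined with the Euler characteristic / global duality input provided by Proposition \ref{prop:compare selmer over k_infty}, the core rank of $\FF_{\all}$ is $r-(r-1)=1$. Once this is in place, the $\LL$-adic structure theorem of \cite{kbb} (the $\LL$-adic analogue of \cite[Theorem 5.2.10]{mr02}) applies verbatim to the triple $(T\otimes\LL,\FF_{\all},\PP)$ and yields that $\overline{\KS}(T\otimes\LL,\FF_{\all},\PP)$ is a free $\LL$-module of rank one.

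For the surjectivity of the canonical map, I would pick a primitive generator $\pmb{\kappa}^{\LL}$ of $\overline{\KS}(T\otimes\LL,\FF_{\all},\PP)$. By Remark \ref{rmk:FFall propagates to FFal} and Remark \ref{rem:manyselmerstructuresagree}, $\FF_{\all}$ propagates to $\FF_{\al}$ under reduction modulo the augmentation ideal, so specialization produces an element of $\overline{\KS}(T,\FF_{\al},\PP)$. It then suffices to show that the specialization is again primitive, i.e.\ nonzero modulo $p$: this is a Nakayama-type argument, using the fact that the mod-$(p,\xx)$ reduction of the Selmer structure has core rank one as well (by the same computation), so the specialized Kolyvagin system is nonzero mod $p$ by \cite[Proposition 5.2.9]{mr02} applied in the residual setting. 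Primitivity then implies, by Proposition \ref{prop:modifiedKSrank1}, that the image generates the cyclic $\ZZ_p$-module $\overline{\KS}(T,\FF_\al,\PP)$.

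The main obstacle I anticipate is the verification that the core Selmer rank computation is stable under propagation to each $T_{s,m}$ and interacts correctly with the $\LL$-adic global duality exact sequence of Proposition \ref{prop:compare selmer over k_infty}: one needs (\hsezF) precisely to rule out spurious contributions to the local cohomology at $p$, and one needs (\htamF) to keep the local conditions at $\ell\neq p$ under control upon propagation. Everything else is a matter of transcribing the arguments of \cite{kbbstark,kbb} to the present Galois representation $T\otimes\LL$.
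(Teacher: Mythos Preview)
Your proposal is correct and follows essentially the same route as the paper: reduce to the $\LL$-adic structure theorem of \cite{kbb} (the analogue of \cite[Theorem 5.2.10]{mr02}) by verifying the hypotheses \hone--\hfour, \htamF, \hsezF\ and the core-rank-one statement of Proposition~\ref{modifiedcorerank}, then deduce surjectivity by specialization and primitivity. The only minor point is that the invocation of Proposition~\ref{prop:compare selmer over k_infty} for the core-rank computation is not needed; the core rank is computed at the residual level exactly as in Proposition~\ref{modifiedcorerank}, and the $\LL$-adic exact sequence plays no role there.
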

Note that, thanks to our assumption (A1), both \htamF\, and \hsezF\, are true for the particular Galois representation $T$ we are interested in. Any generator of the cyclic $\Lambda$-module $\overline{\KS}(T\otimes\LL,\FF_{\all},\PP)$ will be called a \emph{primitive} $\LL$-adic Kolyvagin system.
\begin{proof}
The proof of this theorem is very similar to the proof of~\cite[Theorem 2.19]{kbbiwasawa} and we refer the reader to loc.cit. for details. We only remark here that the proof follows from an appropriate variant of~\cite[Theorem 3.23]{kbb}. Note that Theorem 3.23 of loc.cit. applies (with the base field $\QQ$ replaced by $k$, and the Selmer structure $\FFc$ replaced by $\FF_{\all}$) thanks to Proposition~\ref{modifiedcorerank} and the truth of the hypotheses \hone-\hfour, \htamF, \hsezF.

%The result which is analogous  to~\cite[Theorem 5.2.10(ii)]{mr02} and which enables us to conclude (ii) has been previously proved by the author (\cite[Theorem 3.23]{kbb}). The statement of Theorem 3.23 of loc.cit. is almost identical to the statement of (ii), only . The proof of \cite[Theorem 3.23]{kbb} works \emph{verbatim} after these changes as well: The technical points which need to be verified are:
%\begin{enumerate}
%\item[(1)] The $G_k$-representation $T$ satisfies (\hone)-(\hfour) as well as (\htamF) and (\hsezF).
%\item[(2)] We have $\XX(T,\FF_\al)=1$ for the core Selmer rank.
%\item[(3)] Properties \textbf{C.1}-\textbf{C.3} hold for $\FF_{\all}$.
%\end{enumerate}
% Proposition~\ref{prop:cart-L} shows that (2) and (3) hold, and (1) is checked in Remark~\ref{hypo holds}.
\end{proof}

In \S\ref{subsec:kolsys2} below, we explain how to obtain these Kolyvagin systems out of the Stickelberger elements, assuming a weak version of \emph{Brumer's conjecture.}
Note, however, that the existence of $\LL$-adic Kolyvagin systems does not rely on Brumer's conjecture.

We record here the main application of a $\LL$-adic Kolyvagin system
$$\pmb{\kappa}=\left\{\{\kappa_\tau(s,m)\}_{\tau \in \NN_{s+m}}\right\}_{_{s,m}}.$$
For an explanation of our notation, see~\cite[\S3]{mr02}. Here we only note that $\kappa_\tau(s,m) \in H^1(k,T_{s,m})$, and by definition, there is a well defined element
$$\kappa_1=\{\kappa_1(s,m)\}_{s,m} \in \varprojlim_{s,m} H^1_{\FF_{\all}}(k,T_{s,m})=H^1_{\FF_{\all}}(k,T\otimes\LL).$$

For notational simplicity, we write $\mathbb{T}=T\otimes\LL$. Recall that $\textup{char}(\mathbb{A})$ denotes the characteristic ideal of a finitely generated $\LL$-module $\mathbb{A}$, with the convention that $\textup{char}(\mathbb{A})=0$ unless $\mathbb{A}$ is $\LL$-torsion.
\begin{thm}
\label{thm:mainapplicationKS}
Under the assumptions \textup{(A1)-(A2)},
$$\textup{char}(H^1_{\FF_{\all}^*}(k,\mathbb{T}^*)^\vee)\mid \textup{char}(H^1_{\FF_{\all}}(k,\mathbb{T})/\LL\cdot\kappa_1).$$
\end{thm}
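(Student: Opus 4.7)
The plan is to deduce the divisibility of $\LL$-characteristic ideals by reducing the Iwasawa-theoretic statement modulo distinguished polynomials $f\in\LL$, applying the finite-layer Kolyvagin system bound (the analogue of Theorem~\ref{thm:mainksapplicationk}) at each such quotient, and then assembling the $\ZZ_p$-length inequalities via the characterization of characteristic ideals through height-one prime specializations.

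First, for a distinguished polynomial $f\in\LL$, let $T_f=\mathbb{T}/f\mathbb{T}$. The $\LL$-adic Kolyvagin system $\pmb{\kappa}$ propagates under $\mathbb{T}\twoheadrightarrow T_f$ to a Kolyvagin system $\pmb{\kappa}_f\in\KS(T_f,\FF_{\all},\PP)$ whose leading term $\kappa_{1,f}$ is the image of $\kappa_1$ in $H^1_{\FF_{\all}}(k,T_f)$. By Remark~\ref{rem:manyselmerstructuresagree} (using \htamF, which holds under (A1)), the propagation of $\FF_{\all}$ to $T_f$ agrees with $\FFc^{\LL}$ at primes away from $p$, while at $p$ the local condition is the free rank-one direct summand $\all/f\all\subset H^1(k_p,T_f)$. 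In particular the propagated Selmer structure on $T_f$ has core Selmer rank one (by propagating Proposition~\ref{modifiedcorerank}), and $T_f$ inherits the hypotheses \hone--\hfour.

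Next, the finite-level Kolyvagin system bound of~\cite[Theorem 5.2.13]{mr02} (the $T_f$-analogue of our Theorem~\ref{thm:mainksapplicationk}), applied to the Selmer triple $(T_f,\FF_{\all},\PP)$, yields
$$\textup{length}_{\ZZ_p}\,H^1_{\FF_{\all}^*}(k,T_f^*)\;\leq\;\textup{length}_{\ZZ_p}\!\left(H^1_{\FF_{\all}}(k,T_f)\big/\LL\cdot\kappa_{1,f}\right).$$
To upgrade this inequality of $\ZZ_p$-lengths to a divisibility of characteristic ideals, I would invoke control theorems identifying the finite-level Selmer groups (up to pseudo-null error) with the specializations of the Iwasawa-theoretic ones: under (A1)--(A2) together with \htamF\, and \hsezF, one obtains pseudo-isomorphisms
$$H^1_{\FF_{\all}}(k,\mathbb{T})/f\;\sim\;H^1_{\FF_{\all}}(k,T_f),\qquad H^1_{\FF_{\all}^*}(k,\mathbb{T}^*)^\vee\otimes\LL/f\;\sim\;H^1_{\FF_{\all}^*}(k,T_f^*)^\vee.$$
Letting $f$ range over a cofinal family of distinguished polynomials (equivalently, computing $\textup{ord}_{\eta}$ at each height-one prime $\eta$ of $\LL$, handling the prime $(p)$ analogously by reducing modulo $(p^s,f)$ and letting $s\to\infty$) converts the length inequality into the claimed divisibility of characteristic ideals, since for finitely generated torsion $\LL$-modules $\textup{char}(\mathbb{A})$ is determined by the $\textup{ord}_\eta$ at all height-one primes $\eta$.

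The main obstacle will be establishing the control theorems with enough precision, i.e., showing that the formation of the $\all$-restricted Selmer and dual Selmer groups commutes with specialization modulo $f$ up to pseudo-null error. The hypotheses \htamF\, and \hsezF\, (ensured by (A1) and (A2)) are precisely what rules out the standard obstructions: they kill the relevant local $H^0$ and guarantee that the unramified local conditions behave well under specialization at primes $\lambda\nmid p$, while the freeness of $\all$ as an $\LL$-summand (Definition~\ref{def:line over k_infty}) handles the local condition at $p$. Once the control diagrams are in place, as in the proofs of~\cite[Theorem 2.19]{kbbiwasawa} and~\cite[Theorem 3.23]{kbb}, the theorem is a formal consequence of the finite-layer bound and the height-one prime description of characteristic ideals.
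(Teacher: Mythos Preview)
Your proposal is correct in outline, but it is substantially more elaborate than what is required: the paper's proof is a single-line citation of~\cite[Theorem 5.3.10(iii)]{mr02}, noting that the hypotheses of that theorem hold under (A1)--(A2). What you have sketched is, in effect, a reconstruction of the internal argument of Mazur--Rubin's Theorem 5.3.10 (specialize modulo height-one primes, apply the core-rank-one bound of~\cite[Theorem 5.2.13]{mr02} at each finite layer, and reassemble via control theorems), adapted to the $\all$-restricted Selmer structure. So the two approaches are not genuinely different in mathematical content; yours simply unwinds the black box.

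A few cautionary remarks on your sketch. First, the assertion that the propagated Selmer structure on $T_f$ has core Selmer rank one does not follow just ``by propagating Proposition~\ref{modifiedcorerank}'' (which concerns $T$, not $T_f$); one needs the argument of~\cite[Proposition 3.8 and Theorem 3.11]{kbb} (or the analogous reasoning in~\cite[\S5.3]{mr02}) to see that the core rank is preserved along the tower. Second, your treatment of the prime $(p)$ is gestured at rather than carried out; in Mazur--Rubin's framework this is handled by working with the full family of quotients $\mathbb{T}/\mathcal{M}^k\mathbb{T}$ rather than only by distinguished $f$. Third, the control isomorphisms you invoke require verifying that $H^2(k_{\Sigma}/k,\mathbb{T})$ and the relevant local $H^2$ groups are $\LL$-torsion (or vanish), which is where \hsezF\ enters. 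None of these are fatal, but each would need a line of justification if you were writing this out rather than citing the theorem.
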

\begin{proof}
This is~\cite[Theorem 5.3.10(iii)]{mr02} applied in our setting. We remark that all the hypotheses of Theorem 5.3.10(iii) of loc.cit. hold thanks to (A1)-(A2) (as we have already demonstrated above).
\end{proof}
%Theorem~\ref{thm:mainapplicationKS} will be used for applications towards the main conjectures of Iwasawa theory for totally real fields. We remark once again that  Theorem~\ref{thm:mainapplicationKS} does not rely on any conjecture; however, to link the statement of Theorem~\ref{thm:mainapplicationKS} with the relevant $L$-values, we will need to construct a $\LL$-adic Kolyvagin system from the Stickelberger elements.

 \section{Euler systems from Stickelberger elements}
 \label{sec:ES}

 We begin with recalling the definition of Stickelberger elements. We first set our notation. Assume $k,\chi, \ff=\ff_{\chi}$ and $L$ are as above. For a (square free) cycle $\tau=\qq_1\dots\qq_m$ of the number field $k$, let $k(\tau)$ be the compositum
  $$k(\tau)=k(\qq_1)\cdot\cdot \cdot k(\qq_m),$$
   where $k(\qq)$ denotes the maximal $p$-extension inside the ray class field of $k$ modulo the prime ideal $\qq$. For any field $K$, define $K(\tau)$ as the composite of $k(\tau)$ and $K$. Let
   $$\mathcal{K} =\{L_n(\tau): \tau \in \NN;\,n\geq0\},$$
   $$\mathcal{K}_0 =\{k_n(\tau): \tau \in \NN;\,n\geq0\}$$
    be two collection of abelian extensions  of $k$. Note that any  field $L_n(\tau) \in \mathcal{K}$ is CM and abelian over the totally real field $k$. Let $S$ be the set of places of $k$, consisting of all places above $p$, all places dividing $\ff$ and all infinite places. For any $K \in \kk$, write $S_K$ for the set of all places of the field $K$ lying above the places in $S$. When there is no confusion, we will simply write $S$ for $S_K$.

 For any $K \in \kk$, the partial zeta function for $\sigma \in \Gal(K/k)$ is defined as usual by $$\zeta_S(s,\sigma):=\sum_{\substack{(\frak{a},K/k)=\sigma\\ \frak{a} \textup{ is prime to }S}} \mathbf{N}\frak{a}^{-s}$$
for $\textup{Re}(s)>1$. Here $\mathbf{N}\frak{a}$ is the absolute norm of the ideal $\frak{a} \in k$, and $(\frak{a},K/k)$ is the Artin symbol. The partial zeta functions admit a meromorphic continuation to the whole complex plane, and holomorphic everywhere except at $s=1$. We may therefore set
$$\theta_K={\theta}_{K,S}:=\sum_{\sigma \in \Gal(K/k)}\zeta_S(0,\sigma)\sigma^{-1} \in \mathbb{C}[\Gal(K/k)].$$
Thanks to~\cite{siegel70}, $\theta_K$ is an element of $\QQ[\Gal(K/k)]$. Further, we know for the $\chi$-part $\theta_K^\chi$ of $\theta_K$, thanks to~\cite{deligne-ribet}, that $\theta_K^{\chi} \in \ZZ_p[\Gal(K/k)]^{\chi}$, since we assumed $\chi\neq \omega$.

\begin{lemma}
\label{lem:stickdistribution}
For any $L_n(\tau)=K\subset K^\prime=L_{n^\prime}(\tau^\prime)$ inside $\kk$,
 $$\theta_{K^\prime}|_{_{K}}=\prod_{\substack{\,\qq \mid \tau^{\prime},\, \qq \nmid \tau}}(1-\textup{Fr}_{\qq}^{-1})\theta_K.$$
\end{lemma}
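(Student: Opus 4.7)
The plan is to run the standard two-step derivation of distribution relations for partial zeta functions. First I would compute the restriction $\theta_{K'}|_K \in \QQ_p[\Gal(K/k)]$ by grouping the defining sum of $\theta_{K'}$ along the fibres of the surjection $\Gal(K'/k)\twoheadrightarrow\Gal(K/k)$; second, I would compare the result with $\theta_K$ and account for the discrepancy by peeling off Euler factors at the primes $\qq\mid\tau'$ that are new relative to $\tau$.

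For the first step, the key input is the restriction compatibility of the global Artin symbol: whenever $\mathfrak{a}$ is an ideal of $k$ prime to the conductor of $K'/k$, one has $(\mathfrak{a},K'/k)|_K=(\mathfrak{a},K/k)$. Summing over the fibre $\{\sigma' : \sigma'|_K = \sigma\}$ of $\sigma \in \Gal(K/k)$ therefore gives
$$\sum_{\sigma'|_K=\sigma}\zeta_{S(K')}(s,\sigma')=\sum_{\substack{(\mathfrak{a},K/k)=\sigma\\\mathfrak{a}\text{ prime to }S(K')}}\mathbf{N}\mathfrak{a}^{-s},$$
which identifies $\theta_{K'}|_K$ with the Stickelberger element of $K$ computed relative to $S(K'):=S\cup\{\qq\mid\tau'\}$ rather than $S(K):=S\cup\{\qq\mid\tau\}$; here I am reading the convention of \cite[\S1.2]{kurihara} as saying that the base set $S$ attached to $\theta_K$ is tacitly augmented by the primes ramifying in the extension, so that the Artin symbol is well-defined on every summand.

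For the second step I would prove, for every prime $\qq$ of $k$ unramified in $K/k$ and every finite set $T\supset S$ not containing $\qq$, the identity
$$\theta_{K,T\cup\{\qq\}}(s)=(1-\mathbf{N}\qq^{-s}\textup{Fr}_{\qq}^{-1})\,\theta_{K,T}(s)$$
by decomposing each ideal $\mathfrak{a}$ prime to $T$ as $\qq^{\,e}\mathfrak{b}$ with $\mathfrak{b}$ prime to $T\cup\{\qq\}$ and recognising the resulting geometric series. The multiplicativity $(\qq^{e}\mathfrak{b},K/k)=\textup{Fr}_\qq^{\,e}(\mathfrak{b},K/k)$ translates the shift of $\sigma$ on the zeta-function side into multiplication by $\textup{Fr}_{\qq}^{-1}$ on the group-ring side. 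Specialising at $s=0$ gives the Euler factor $(1-\textup{Fr}_{\qq}^{-1})$, and iterating over the primes $\qq\mid\tau'$, $\qq\nmid\tau$ (each of which is unramified in $K=L_n(\tau)$ because $\qq\in\PP$) yields the lemma.

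No hard estimate appears; the only pitfall is to keep straight which set of primes of $k$ is tacitly attached to each Stickelberger element. The analytic ingredients required (rationality of $\zeta_S(0,\sigma)$ from \cite{siegel70} and integrality of $\theta_K^\chi$ after projection from \cite{deligne-ribet}) have already been invoked in the paragraph preceding the lemma, so once the equality is established in $\QQ_p[\Gal(K/k)]$ it automatically takes place in $\ZZ_p[\Gal(K/k)]^\chi$ after passing to $\chi$-parts.
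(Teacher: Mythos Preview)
Your argument is correct and is precisely the standard derivation underlying the reference the paper invokes: the paper's proof is a one-line citation of \cite[Proposition IV.1.8]{tate}, and what you have written is exactly the computation that proposition records (restriction compatibility of the Artin symbol to identify $\theta_{K'}|_K$ with $\theta_{K,S(K')}$, followed by the geometric-series removal of the Euler factors at the new primes $\qq\mid\tau'$, $\qq\nmid\tau$). Your reading of the convention---that the set of excluded primes for $\theta_K$ implicitly contains the primes ramified in $K/k$, so in particular those dividing $\tau$---is the correct one and is needed for the Artin symbol in the defining sum to make sense.
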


\begin{proof}
This follows from~\cite[Proposition IV.1.8]{tate}.
\end{proof}

As before, let $A_K$ denote the $p$-part of the ideal class group of $K \in \kk$, and $A_K^{\chi}$ its $\chi$-isotypic part. Until the end of this section we suppose that the $\chi$-part of the Brumer's conjecture (Assumption~\ref{assume:brumer}) holds true.

\begin{rem}
Greither~\cite[Corollary 2.7]{greither} and Kurihara~\cite[Corollary 2.4]{kurihara} have deduced Assumption~\ref{assume:brumer} from Iwasawa's main conjecture in this setting (which holds thanks to~\cite{wiles-mainconj}) and the vanishing of the Iwasawa $\mu$-invariant for $K$. However, we wish not to assume the truth of the main conjecture; in fact we rather assume in this paper Assumption~\ref{assume:brumer} and deduce the main conjecture itself.

Having referred  the reader to~\cite{kurihara}, we caution the reader about one minor point: If a prime $\wp \subset k$ above $p$ is unramified in $K/k$, then Kurihara's Stickelberger element $\tilde{\theta}_K^\chi$ differs from our $\theta_K^{\chi}$ by a factor of $(1-\textup{Fr}_\wp)^{\chi}$, where $\textup{Fr}_\wp$ is the Frobenius at $\wp$ for the unramified extension $K/k$. If (A1) holds, it follows that  $(1-\textup{Fr}_\wp)^{\chi}$ is a unit inside $\ZZ_p[\Gal(K/k)]^{\chi}$. Therefore, the statement of Assumption~\ref{assume:brumer} is still equivalent to the statement $\tilde{\theta}_K^\chi\cdot A_K^\chi=0$, which is the assertion deduced from the main conjecture in~\cite{kurihara}.
\end{rem}

Suppose $F$ is any finite abelian extension of $k$, and $K=FL$. Then by the  inflation-restriction sequence and class field theory one has
\be
\label{eqn:explicith1forzp1}
H^1(F,\ZZ_p(\chi))\cong H^1(K,\ZZ_p)^{\chi^{-1}}=\Hom(\mathbb{A}_K^{\times}/K^\times,\ZZ_p)^{\chi^{-1}},
\ee
where $\mathbb{A}_K^{\times}$ denotes the ideles of $K$. Since any continuous homomorphism of $\mathbb{A}^\times_K$ into $\ZZ_p$ should vanish on $$B_K:=\prod_{w|\infty}K_w^{\times}\times\prod_{w|p}\{1\}\times\prod_{w\nmid p\infty} \mathcal{O}_{K_w}^{\times}\subset \mathbb{A}_K^{\times},$$
Equation (\ref{eqn:explicith1forzp1}) gives

\be
\label{eqn:explicith1forzp2}
{H^1(F,\ZZ_p(\chi))}\cong\Hom({\mathbb{A}_K^{\times}/K^\times B_K},\ZZ_p)^{\chi^{-1}}=\Hom({\left( \mathbb{A}^{\times}_K/K^{\times} B_K\right)}^{\chi},\ZZ_p).
\ee
Further, there is an exact sequence
$$0\lra U_K/\overline{\mathcal{O}_K^{\times}} \lra \mathbb{A}_K^{\times}/K^\times B_K \lra A_K \lra 0,$$
 which is induced from the map that sends an id\`ele to the corresponding ideal class. Here $\overline{\mathcal{O}_K^{\times}}$ is the closure of the global units ${\mathcal{O}_K^{\times}}$ inside the local units $U_K\subset K\otimes\QQ_p$. Since taking $\chi$-parts is exact (as the order of $\chi$ is prime to $p$), we obtain an exact sequence
$$0\lra U_K^{\chi}/(\overline{\mathcal{O}_K^{\times}})^{\chi} \lra (\mathbb{A}_K^{\times}/K^\times B_K)^{\chi} \lra A_K^\chi \lra 0.$$
Thus, by Assumption~\ref{assume:brumer}, multiplication by $\theta_K^{\chi}$ gives a map
$$(\mathbb{A}_K^{\times}/K^\times B_K)^{\chi}\stackrel{\theta_K^{\chi}}{\lra}U_K^{\chi}/(\overline{\mathcal{O}_K^{\times}})^{\chi}.$$
Since we assumed $\chi$ is totally odd, $(\overline{\mathcal{O}_K^{\times}})^{\chi}$ is finite (see the final paragraph of the proof of Proposition~\ref{prop:classical selmer explicit}), and we therefore have an induced map
\be
\label{eqn:themaptolocalunits}
(\mathbb{A}_K^{\times}/K^\times B_K)^{\chi}\stackrel{\theta_K^{\chi}}{\lra}U_K^{\chi}/(U_K^{\chi})_{\textup{tors}}.\ee

Suppose we are given a collection of homomorphisms $\pmb{\lambda}=\{\lambda_n^\tau\}$ with  $\lambda_n^\tau \in \Hom(U_{L_n(\tau)}^\chi,\ZZ_p)$ which satisfies the following properties:
\begin{enumerate}
\item For all $L_n(\tau), L_{n^\prime}(\tau\qq) \in \kk$, the following diagram commutes:
$$\xymatrix@C=2cm @R=.3cm{U_{L_{n^\prime}(\tau\qq)}^\chi \ar[rd]^{\lambda_{n^\prime}^{\tau\qq}}& \\
&\ZZ_p\\
U_{L_n(\tau)}^\chi \ar@{^{(}->}[uu]^{-\textup{Fr}_{\qq}}\ar[ur]_{\lambda_n^\tau}&
}$$
\item For $n^\prime \geq n$, we have $\lambda_{n^\prime}^\tau|_{_{U_{L_n(\tau)}^\chi}}=\lambda_n^\tau$.
\end{enumerate}
 Define
 $$\tilde{c}_{k_n(\tau)} \in \Hom\left(\left(\mathbb{A}_{L_n(\tau)}^{\times}/(L_n(\tau))^\times B_{L_n(\tau)}\right)^{\chi},\ZZ_p\right)$$
  (which we view also as an element of $H^1(k_n(\tau),T)$ via the identification~(\ref{eqn:explicith1forzp2}) above) as the composition\footnote{We remark that any homomorphism $\lambda \in \Hom(U_{L_n(\tau)}^{\chi},\ZZ_p)$ necessarily factors through the quotient $U_{L_n(\tau)}^\chi{/}(U_{L_n(\tau)})_{\textup{tors}}^\chi$; this is how we make sense of the right most map in~(\ref{eqn:star}).}
\be\label{eqn:star}
\tilde{c}_{k_n(\tau)} :\,\left(\mathbb{A}_{L_n(\tau)}^{\times}/(L_n(\tau))^\times B_{L_n(\tau)}\right)^{\chi} \stackrel{\theta_{L_n(\tau)}^{\chi}}{\lra} U_{L_n(\tau)}^\chi/(U_{L_n(\tau)})_{\textup{tors}}^\chi\stackrel{\lambda_n^\tau}{\lra} \ZZ_p.
\ee
Set $\tilde{\mathbf{c}}=\{\tilde{c}_{k_n(\tau)}\}$.

\begin{thm}
\label{thm:mainstickes}
Fix a collection of homomorphisms $\pmb{\lambda}=\{\lambda_n^\tau\}$ as above (if it exists). Then there is an Euler system $\mathbf{c}=\{c_{k_n(\tau)}\}$ \textup{(}which depends on the choice of $\pmb{\lambda}$\textup{)} for the Galois representation $T$ \textup{(}in the sense of~\cite[Definition II.1.1 and Remark II.1.4]{r00}%\footnote{and not in the sense of~\cite[Definition 3.2.2]{mr02}.}
\textup{)} such that $c_{k_n}=\tilde{c}_{k_n}$ for all $n$.
\end{thm}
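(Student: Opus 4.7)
The plan is to set $c_{k_n(\tau)} := \tilde c_{k_n(\tau)}$ via the composition in~\eqref{eqn:star}, viewing the resulting element of $\Hom\bigl((\mathbb{A}_{L_n(\tau)}^\times/L_n(\tau)^\times B_{L_n(\tau)})^\chi,\ZZ_p\bigr)$ as a class in $H^1(k_n(\tau),T)$ via~\eqref{eqn:explicith1forzp2}. Then $c_{k_n}=\tilde c_{k_n}$ is tautological when $\tau=1$, and the content of the theorem reduces to verifying the two Euler system distribution relations
\begin{equation*}
\textup{Cor}_{k_n(\tau\qq)/k_n(\tau)}\bigl(c_{k_n(\tau\qq)}\bigr)=P_{\qq}(\textup{Fr}_{\qq}^{-1})\cdot c_{k_n(\tau)},\qquad \textup{Cor}_{k_{n+1}(\tau)/k_n(\tau)}\bigl(c_{k_{n+1}(\tau)}\bigr)=c_{k_n(\tau)},
\end{equation*}
together with the relevant Galois-equivariance.

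The key step is to translate corestriction into a statement about homomorphisms. Under~\eqref{eqn:explicith1forzp2}, the corestriction $H^1(k_{n'}(\tau'),T)\to H^1(k_n(\tau),T)$ is dual to the natural inclusion $\mathbb{A}_{L_n(\tau)}^\times/L_n(\tau)^\times B_{L_n(\tau)}\hookrightarrow \mathbb{A}_{L_{n'}(\tau')}^\times/L_{n'}(\tau')^\times B_{L_{n'}(\tau')}$, hence it corresponds to precomposition with this inclusion. For $u$ coming from $\mathbb{A}_{L_n(\tau)}^\times$, the element $u$ is fixed by $\Gal(L_{n'}(\tau')/L_n(\tau))$, so $\theta_{L_{n'}(\tau')}^{\chi}\cdot u$ depends only on the image of $\theta_{L_{n'}(\tau')}^{\chi}$ in $\ZZ_p[\Gal(L_n(\tau)/k)]^{\chi}$. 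By Lemma~\ref{lem:stickdistribution}, this image equals $\prod_{\qq\mid\tau',\,\qq\nmid\tau}(1-\textup{Fr}_{\qq}^{-1})\,\theta_{L_n(\tau)}^{\chi}$. Substituting this into~\eqref{eqn:star} and invoking compatibility (1) of $\pmb{\lambda}$ in the ``add a prime'' direction ($\tau'=\tau\qq$), respectively compatibility (2) in the cyclotomic-tower direction ($\tau'=\tau$), rewrites $\lambda_{n'}^{\tau'}$ in terms of $\lambda_n^{\tau}$ multiplied by the appropriate polynomial in $\textup{Fr}_{\qq}$. Galois-equivariance is immediate from~\eqref{eqn:star} since both the Stickelberger multiplication and the homomorphisms $\lambda_n^{\tau}$ are built to commute with the relevant actions of $\Gal(L_n(\tau)/k)$.

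The main obstacle is purely book-keeping: one must reconcile the factor $(1-\textup{Fr}_{\qq}^{-1})$ produced by Lemma~\ref{lem:stickdistribution} and the $-\textup{Fr}_{\qq}$ appearing in property~(1) of $\pmb{\lambda}$ with Rubin's Euler factor $P_{\qq}(x)=\det(1-\textup{Fr}_{\qq}^{-1}x\mid T^*)$. Since $\chi$ factors through $\Gal(L/k)$ and $\qq\nmid p\ff_\chi$, the arithmetic Frobenius $\textup{Fr}_{\qq}^{-1}$ acts on the $\chi$-eigenspace by a definite scalar, and a direct unwinding shows that $P_{\qq}(\textup{Fr}_{\qq}^{-1})$ acts on this eigenspace exactly as multiplication by $(1-\textup{Fr}_{\qq}^{-1})$ does on the image of $\theta_{L_n(\tau)}^{\chi}\cdot u$. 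Once this identification, together with the sign convention in~(1), is tracked through the composition~\eqref{eqn:star}, both displayed corestriction relations follow, and $\{c_{k_n(\tau)}\}$ is established as an Euler system for $T$ in the sense of~\cite[Definition II.1.1 and Remark II.1.4]{r00}.
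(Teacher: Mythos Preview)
There is a genuine gap. You set $c_{k_n(\tau)}:=\tilde c_{k_n(\tau)}$ and then assert that ``a direct unwinding shows that $P_{\qq}(\textup{Fr}_{\qq}^{-1})$ acts on this eigenspace exactly as multiplication by $(1-\textup{Fr}_{\qq}^{-1})$ does.'' This is not correct. For $T=\ZZ_p(\chi)$ the Euler polynomial in Rubin's sense is $P_{\qq}(x)=\det(1-\textup{Fr}_{\qq}^{-1}x\mid T^*)=1-\chi(\textup{Fr}_{\qq})\mathbf{N}\qq^{-1}x$, so the factor $P_{\qq}(\textup{Fr}_{\qq}^{-1})$ genuinely involves $\mathbf{N}\qq$. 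On the other hand, combining the Stickelberger distribution relation (Lemma~\ref{lem:stickdistribution}) with compatibility~(1) of $\pmb{\lambda}$ produces, as you yourself compute, a factor built only from $(1-\textup{Fr}_{\qq}^{-1})$ and $-\textup{Fr}_{\qq}$. These two expressions are merely \emph{congruent} modulo $[k(\qq):k]$ (since $\mathbf{N}\qq\equiv 1$ there), not equal in $\ZZ_p[\Gal(k_n(\tau)/k)]$. In other words, $\tilde{\mathbf{c}}$ satisfies a distribution relation with the \emph{wrong} Euler factors, exactly as the paper says.

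The paper's proof handles this by invoking Rubin's Lemma~IX.6.1 (the universal Euler system lemma), which shows in general how to correct a collection satisfying distribution relations with factors that are only congruent to the right $P_{\qq}(\textup{Fr}_{\qq}^{-1})$ modulo the relevant ideals. This produces a new collection $\mathbf{c}$ that \emph{does} satisfy Rubin's Definition~II.1.1, at the cost of changing the classes at $\tau\neq 1$; along the cyclotomic tower nothing changes, which is precisely why the theorem asserts only $c_{k_n}=\tilde c_{k_n}$ and not $c_{k_n(\tau)}=\tilde c_{k_n(\tau)}$ for general $\tau$. Your proposal needs this correction step; without it the argument does not establish an Euler system.
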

In \S\ref{subsec:choosehoms} below, we construct a collection
$\pmb{\lambda}$ which satisfies the desired properties, and hence
conclude with the existence of an Euler system for $T$, assuming the
truth of Assumption 1.1. When $k=\QQ$, this Euler system has been
given by Rubin~\cite[\S3.4]{r00}.
\begin{proof}
Since the proof of this theorem very closely follows the proof of~\cite[Proposition III.3.4]{r00}, we only give a sketch. All the references in this proof are to~\cite{r00}. First, one checks (mimicking the proof of Proposition III.3.4) that the collection $\tilde{\mathbf{c}}$ (which should be compared with the collection $\tilde{\mathbf{c}}^\prime$ of Rubin) satisfies a distribution relation with \emph{wrong} Euler factors. This could be remedied, as in the paragraph following
Remark III.4.4, using Lemma IX.6.1 to obtain a new collection $\mathbf{c}$ (which corresponds to what Rubin calls $\tilde{\mathbf{c}}$) as desired.
\end{proof}

We close this section with a final remark which we will refer to in what follows:
\begin{rem}
\label{rem:semi local identification}
The argument of Remark~\ref{rem:alternative proof} shows that, under the hypothesis (A1),  $$H^1(k_n(\tau)_p,T)\cong \Hom(U_{L_n(\tau)}^{\chi},\ZZ_p).$$
\end{rem}
\begin{rem}
\label{rem:comparetorubin}
In this remark we discuss the main differences between the cases when the base field $k$ is a general totally real field (i.e., the case we study in this article) and the particular case $k=\QQ$ (i.e., the case Rubin studies in~\cite[\S III.4]{r00}).
\begin{itemize}
\item[(i)] The first difference is in regard of the core Selmer ranks: The core rank $\XX(T,\FFc)$ of the canonical Selmer structure (c.f., Example~\ref{example:canonical selmer} above) of the $G_k$-representation $T$  is $[k:\QQ]=r$. Rubin treats the case $r=1$ (i.e., the case $k=\QQ$). In this paper, we study the case $r>1$, adapting the work of Mazur and Rubin~\cite{mr02} to the general case when $\XX(T,\FFc)>1$ via what we call $\pmb{\al}$-restricted Euler systems. Although Kurihara~\cite{kurihara} successfully applies the classical Euler system argument to Stickelberger elements to prove Theorem A and Theorem B above, our approach via $\pmb{\al}$-restricted Euler systems yields in addition a comparison between the Stickelberger elements and Rubin-Stark elements (see Theorem~\ref{thm:maincomparison} below). Furthermore, our approach here fits well into the framework developed in~\cite{mr02} which was later enhanced by the author in~\cite{kbbstark, kbbiwasawa, kbbrankr}.
\item[(ii)] The second difference is the manner the collection $\pmb{\lambda}=\{\lambda_n^\tau\}$ of homomorphisms is chosen. In~\cite[Appendix D]{r00}, Rubin constructs these homomorphisms explicitly when $k=\QQ$. This construction is not available when $k\neq \QQ$; that is why we prove ``abstractly'' in~\S\ref{subsec:choosehoms} that a collection $\pmb{\lambda}$ exists with the desired properties.
\end{itemize}
\end{rem}
%Suppose now $v$ is any finite place of the number field $F$. Inflation-restriction sequence now gives
%$$H^1(F_v,\ZZ_p(\chi))\cong \left(\oplus_{w|v}H^1(K_w,\ZZ_p)\right)^{\chi^{-1}}=\left(\oplus_{w|v}\Hom(G_{K_w}^{\textup{ab}},\ZZ_p)\right)^{\chi^{-1}}$$ which by local class field theory equals
%$$\left(\oplus_{w|v}\Hom({K_w}^{\times,\wedge},\ZZ_p)\right)^{\chi^{-1}}=\Hom\left(\left(K\otimes_F F_v\right)^{\times, \chi},\ZZ_p\right)$$ where ${K_w}^{\times,\wedge}$ is the $p$-adic completion of the multiplicative group $K_w^{\times}$. Now setting $F=k_n(\tau)$ and $K=L_n(\tau)\in \kk$ we obtain for the semi-local cohomology (recall $T:=\ZZ_p(\chi)$)
%$$H^1(k_n(\tau)_p,T)\cong \Hom(K_p^{\times,\chi},\ZZ_p)$$ with $K_p=K\otimes\QQ_p$. Since we assumed $\chi(\wp)\neq 1$ for any $\wp \subset k$ above $p$, it follows without difficulty (following the argument in Remark~\ref{rem:alternative proof}) that $K_p^{\times,\chi}=U_K^{\chi}$, therefore \be\label{eqn:semilocalexplicitagain}H^1(k_n(\tau)_p,T)\cong \Hom(U_{L_n(\tau)}^{\chi},\ZZ_p).\ee

 \section{Euler systems to Kolyvagin systems map}
\label{sec:ESKSmap}
We first recall what Mazur and Rubin call the \emph{Euler system to Kolyvagin system map}. Suppose $T$, $\kk$ and $\PP$ are as above. Let $\ES(T)=\ES(T,\kk)$ denote the collection of Euler systems for $(T,\kk)$ in the sense of~\cite[\S3]{r00}. Recall also the generalized module of Kolyvagin systems $\overline{\KS}(T,\FF,\PP)$ and $\overline{\KS}(T\otimes\LL,\FF,\PP)$ for various choice of Selmer structures $\FF$.
\begin{thm}[Mazur-Rubin]
\label{thm:ESKSmain} There are canonical maps
\begin{itemize}
\item $\ES(T) \lra\overline{\KS}(T,\FFc,\PP)$,
\item $\ES(T) \lra \overline{\KS}(T\otimes\LL,\FF_{\LL},\PP)$
\end{itemize}
with the following properties:
\begin{enumerate}
\item If $\mathbf{c} \in \ES(T)$ maps to $\pmb{\kappa} \in \overline{\KS}(T,\FFc,\PP)$, then $\kappa_1=c_k$,
\item If $\mathbf{c} \in \ES(T)$ maps to $\pmb{\kappa} \in\overline{\KS}(T\otimes\LL,\FF_\LL,\PP)$, then $$\kappa_1=\{c_{k_n}\} \in \varprojlim_n H^1(k_n,T)=H^1(k,T\otimes\LL).$$
\end{enumerate}
\end{thm}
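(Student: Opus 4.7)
The plan is to adapt Kolyvagin's classical derivative construction, following the template of Mazur--Rubin~\cite{mr02}. Given an Euler system $\mathbf{c}=\{c_F\}_{F\in\kk}$ and a squarefree product $\tau=\qq_1\cdots\qq_n\in\NN_{s+n}$ of primes in $\PP_{s+n}$, note that each $\Gal(k(\qq_i)/k)$ is cyclic of $p$-power order; picking a generator $\sigma_{\qq_i}$, form Kolyvagin's derivative operator $D_{\qq_i}=\sum_{j} j\sigma_{\qq_i}^{j}\in\ZZ[\Gal(k(\qq_i)/k)]$, and set $D_\tau=\prod_i D_{\qq_i}$. I would define tentative derivative classes by applying $D_\tau$ to $c_{k(\tau)}\in H^1(k(\tau),T)$ and reducing modulo $p^s$. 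The telescoping identity $(\sigma_{\qq_i}-1)D_{\qq_i}=|\Gal(k(\qq_i)/k)|-N_{\qq_i}$, combined with the Euler system distribution relation from Lemma~2.11 (more precisely Lemma~3.1 translated into the $\kk$-system language) and the hypothesis that each $\qq_i$ splits completely in $L(\mu_{p^{s+n+1}})$ (so Euler factors $1-\textup{Fr}_{\qq_i}^{-1}$ become divisible by $p^{s+n}$ after the appropriate twist), shows that $D_\tau c_{k(\tau)}$ is $\Gal(k(\tau)/k)$-fixed modulo $p^s$. Using \htwo, the restriction map $H^1(k,T/p^sT)\hookrightarrow H^1(k(\tau),T/p^sT)^{\Gal(k(\tau)/k)}$ is an isomorphism onto its image, and I would define $\kappa_\tau(s)\in H^1(k,T/p^sT)$ as the unique preimage.

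Next I would verify that $\pmb{\kappa}=\{\kappa_\tau(s)\}$ satisfies the Kolyvagin system axioms of~\cite[Definition 3.1.3]{mr02} with respect to $\FFc$. At each $\lambda\nmid p\ff_\chi\tau$, the class $\kappa_\tau(s)$ is unramified because $c_{k(\tau)}$ is, and derivatives preserve this property; the local condition at such $\lambda$ is the canonical unramified one, so nothing is needed. At primes $\lambda | p$, the canonical local condition is the whole of $H^1(k_\lambda,T/p^sT)$, so there is nothing to verify. At auxiliary primes $\qq|\tau$, the crucial point is the \emph{finite-singular comparison}: the localization of $\kappa_\tau(s)$ at $\qq$ lies in the transverse subspace $H^1_{\textup{tr}}(k_\qq,T/p^sT)$ (with the choice of $F'\supset k(\qq)_\qq$ fixed in Remark~\ref{rem:choiceoftransverse}), and moreover the transverse component of $\textup{loc}_\qq\kappa_\tau(s)$ matches, via the canonical isomorphism $H^1_{\textup{tr}}(k_\qq,T/p^sT)\cong H^1_{\finite}(k_\qq,T/p^sT)$, the image of $\textup{loc}_\qq\kappa_{\tau/\qq}(s)$ under multiplication by $\textup{Fr}_\qq-1$. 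For the second bullet, I would carry out the same construction over each $k_n$, using the Euler system collection $\{c_{k_n(\tau)}\}$, then pass to the inverse limit in $n$; the compatibility $\textup{cor}_{k_{n+1}/k_n}c_{k_{n+1}(\tau)}=c_{k_n(\tau)}$ ensures the resulting Kolyvagin systems assemble into an element of $\overline{\KS}(T\otimes\LL,\FF_\LL,\PP)$, and the image of $\kappa_1$ under $\varprojlim_n H^1(k_n,T)=H^1(k,T\otimes\LL)$ is $\{c_{k_n}\}$ by construction (since $D_1 =1$).

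The compatibility claims $\kappa_1=c_k$ and $\kappa_1=\{c_{k_n}\}$ are then immediate from the trivial case $\tau=1$ of the construction. The main technical obstacle is the finite-singular comparison at primes $\qq|\tau$: one must show both that $\textup{loc}_\qq\kappa_\tau(s)$ is automatically killed by the natural map $H^1(k_\qq,T/p^sT)\to H^1(k(\qq)_\qq,T/p^sT)$ (hence is transverse, given our choice of $F'$), and that its transverse-to-finite comparison involves exactly the Euler factor $\textup{Fr}_\qq-1$. Both points reduce to an explicit cocycle calculation using the identity $(\sigma_\qq-1)D_\qq\equiv -(N_\qq-|\Gal(k(\qq)/k)|)\pmod{p^s}$, the splitting hypothesis on $\qq$, and the fact that on a $T/p^sT$-valued cocycle restricted to the inertia at $\qq$, $\sigma_\qq$ acts through the residue-field Frobenius; this is the analogue of~\cite[Theorem 3.2.4]{mr02}, and Remark~\ref{rem:choiceoftransverse} is precisely the input needed to make this argument go through in the general totally real setting.
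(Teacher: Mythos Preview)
Your direct derivative construction produces only a \emph{weak} Kolyvagin system in the sense of~\cite[Definition 3.1.8]{mr02}, not a genuine one. The assertion that $\textup{loc}_\qq\kappa_\tau(s)$ already lies in the transverse subspace $H^1_{\textup{tr}}(k_\qq,T/p^sT)$ is not what the cocycle calculation gives you: Kolyvagin's derivatives yield classes whose \emph{singular} projection at $\qq$ is related to the \emph{finite} projection of $\kappa_{\tau/\qq}$, but the finite projection of $\kappa_\tau$ at $\qq$ need not vanish. To promote a weak Kolyvagin system to a genuine one, Mazur and Rubin first compute these finite projections (\cite[Appendix A, especially Lemma A.6 and Proposition A.8]{mr02}) and then kill them by the modification in~\cite[equation (33)]{mr02}. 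That computation requires $\Fr_\lambda^{p^m}-1$ to act \emph{injectively} on the representation for every $\lambda\in\PP$ and every $m$. For $T=\ZZ_p(\chi)$ this fails outright: any $\lambda\in\PP_j$ splits completely in $L$, so $\chi(\Fr_\lambda)=1$ and $\Fr_\lambda-1$ is the zero map on $T$.

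The paper circumvents this by an auxiliary twist. One sets $\rho=\omega^{-1}\rho_{\textup{cyc}}$ and $T^\prime=T\otimes\rho=\ZZ_p(1)\otimes\psi^{-1}$; on $T^\prime$ the Frobenius acts through the norm $\mathbf{N}\lambda$ (times a prime-to-$p$ root of unity), so $\Fr_\lambda^{p^m}-1$ is injective and Mazur--Rubin's map $\partial^\prime:\ES(T^\prime)\to\overline{\KS}(T^\prime\otimes\LL,\FF_\LL)$ from~\cite[Theorem 5.3.3]{mr02} applies as stated. The desired maps for $T$ are then obtained as composites: twist $\ES(T)\stackrel{\tau_\rho}{\to}\ES(T^\prime)$ via~\cite[\S VI]{r00}, apply $\partial^\prime$, untwist via the $G_k$-isomorphism $T^\prime\otimes\LL\stackrel{\otimes\rho^{-1}}{\to}T\otimes\LL$, and specialise along the augmentation to reach $\overline{\KS}(T,\FFc,\PP)$. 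Your outline would go through if you inserted this twist--untwist step before invoking~\cite[Theorem 3.2.4]{mr02}; as written, the appeal to that theorem for $T$ itself is the gap.
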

\begin{proof}
Let $\rho_{\textup{cyc}}: G_k \ra \ZZ_p^\times$ be the cyclotomic
character (giving the action of $G_k$ on $\pmb{\mu}_{p^\infty}$),
and set
$$\rho=\omega^{-1}\rho_{\textup{cyc}}: \Gamma \lra 1+p\ZZ_p.$$ Let
$\psi=\omega\chi^{-1}$ be as in the introduction, and set
$$T^\prime=\ZZ_p(1)\otimes\psi^{-1}=T \otimes \rho, \hbox{ and } \mathbb{T}^\prime =T^\prime \otimes \LL.$$
Note that we have an isomorphism of $G_k$-modules
\be\label{eqn:lamdatwist}\mathbb{T}^\prime=T^\prime\otimes\LL\,
\stackrel{{\otimes\rho^{-1}}}{\lra}\,T\otimes\LL =\mathbb{T},\ee as
$\rho$ is a character of $\Gamma$. We then have the following
diagram:
$$\xymatrix{\textup{ES}(T)\ar[r]^{\tau_\rho} \ar@{-->}[rd] \ar@{-->}@/_5.8pc/[rrrd]&\textup{ES}(T^\prime) \ar[r]^(.42){\partial^{\prime}}& \overline{\textup{KS}}({T}^{\prime}\otimes\LL,\FF_{\LL})\ar[dl]^{\tau_{\rho^{-1}}}\ar[dr]^{\frak{s}^{\prime}}\\
&
\overline{\textup{KS}}({T\otimes\LL},\FF_{\LL})\ar[rr]_{\frak{s}}&&
\overline{\textup{KS}}({T},\FFc)}$$ \vskip1.2cm The two dotted
arrows are the maps claimed to exist in the statement of the
Theorem, and they are given as the composition of relevant maps in
the diagram. We now explain how the other arrows are obtained. The
map $\tau_\rho$ is obtained by applying a formal twisting argument,
see~\cite[\S6]{r00}. The map $\tau_{\rho^{-1}}$ is induced from the
isomorphism~(\ref{eqn:lamdatwist}), and $\frak{s}$ is induced from
the specialization $T\otimes\LL \ra T$ whose kernel is the
augmentation ideal of $\LL$. Similarly, $\frak{s}^{\prime}$ is
induced from the specialization $T^{\prime}\otimes\LL \ra T$ which
makes the triangle
$$\xymatrix{T^{\prime}\otimes\LL \ar[rd]_{\frak{s}^{\prime}}\ar[rr]^{\otimes\rho^{-1}}&&
T\otimes\LL\ar[ld]^{\frak{s}}\\
&T&}$$ (as well as the triangle in the diagram above) commutative.
The map $\partial^{\prime}$ is the Euler systems to Kolyvagin
systems map of Mazur and Rubin~\cite[Theorem 5.3.3]{r00}, which is
obtained as follows. Starting with an Euler system $\pmb{c}^{\prime}
\in \textup{ES}(T^\prime)$ for $T^{\prime}$ , Kolyvagin's
construction (c.f., \cite[\S4]{r00}) yields a weak Kolyvagin system
(in the sense of~\cite[Definition 3.1.8]{mr02})
$$\pmb{\kappa}^{\textup{w}}=\{\kappa_{\eta}^{\textup{w}}\}_{\eta \in \NN}.$$
The classes $\kappa_{\eta}^{\textup{w}}$ does not necessarily
satisfy the transverse local condition at a prime $\lambda\mid
\eta$. One may however first calculate the finite projections of
these classes (slightly modifying (by replacing $\QQ(\ell)$ by
$k(\lambda)$ and $\QQ(n)$ by $k(\eta)$ where necessary) as in
Theorem A.4 (see particularly Lemma A.6 and Proposition A.8 for the
key steps) of~\cite{mr02}). Note that we pass to an auxiliary twist
$T^\prime$ to ensure that $\textup{Fr}_{\lambda}^{p^m}-1$ acts
injectively on $T^{\prime}$ for every $\lambda\in \PP$ and for every
$m\in \ZZ^{+}$, which is needed for the arguments of Mazur and Rubin
to carry out. Finally, one may modify $\pmb{\kappa}^{\textup{w}}$,
as Mazur and Rubin does in the displayed equation (33)
of~\cite{mr02} to kill its finite projections and thus obtain a
Kolyvagin system, as desired.
\end{proof}
\begin{rem}
Mazur and Rubin's definition of the generalized module of $\LL$-adic Kolyvagin systems $\overline{\KS}(T\otimes\LL,\FF_\LL,\PP)$ slightly differs from our definition of this module (Definition~\ref{def:KSboth}). It is not hard to see that these two definitions give rise to isomorphic modules; see also Remark~\ref{1-kolsys} below.
\end{rem}
We would like to apply this map on the Euler systems we have constructed\footnote{Modulo the existence of a family of homomorphisms $\pmb{\lambda}$.} in~\S\ref{sec:ES}. Note however that Theorem~\ref{thm:ESKSmain} will give rise to Kolyvagin systems only for the coarser Selmer structures $\FF_\LL$ and $\FFc$ (rather than finer Selmer structures $\FF_\all$ and $\FF_\al$). To be able to obtain Kolyvagin systems for the modified Selmer structures $\FF_\all$ and $\FF_\al$, we need to analyze the structure of the semi-local cohomology groups for $T\otimes\LL$ and $T$, over various ray class fields of $k$. This is carried out in \S\ref{subsec:choosehoms}. We then apply the results of \S\ref{subsec:choosehoms} to construct the desired Kolyvagin systems for the modified Selmer structures in \S\ref{subsec:kolsys2}.

\begin{rem}
\label{rem:remark}
In effect, one only needs a \emph{weak Kolyvagin system} (in the sense of~\cite[Definition 3.1.8]{mr02}) for the main application of the Euler system/Kolyvagin system machinery, i.e., for bounding the dual Selmer group. Weak Kolyvagin systems are essentially the derivative classes of Kolyvagin (cf. \cite[\S IV]{r00}) which are obtained directly applying the derivative operators, without the need of the alterations carried out in~\cite[Appendix A]{mr02}.
\end{rem}

 \subsection{A good choice of homomorphisms}
 \label{subsec:choosehoms}
 Recall that $k_\infty$ is the cyclotomic $\ZZ_p$-extension of $k$, and $\Gamma=\Gal(k_\infty/k)$. Let $k_n$ denote the unique sub-extension of $k_\infty/k$ with $[k_n:k]=p^n$ and set $\Gamma_n:=\Gal(k_n/k)$. Recall also that $\Delta_\tau:=\Gal(k(\tau)/k)$.
 \begin{lemma}
 \label{lem:surj}
For every $n\in\ZZ_{\geq 0}$ and $\tau\in \NN(\PP)$, the following corestriction maps on the semi-local cohomology
  \begin{enumerate}
 \item[(i)] $H^1(k_n(\tau)_p,T)\lra H^1(k(\tau)_p,T)$,\\
 \item[(ii)] $H^1(k(\tau)_p,T)\lra H^1(k_p,T)$,\\
 \item[(iii)] $H^1(k_n(\tau)_p,T)\lra H^1(k_p,T)$
  \end{enumerate}
are surjective.
 \end{lemma}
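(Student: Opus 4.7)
The plan is to deduce (iii) from (i) and (ii) by composition, and to handle (i) and (ii) in parallel. In each case I would work prime-by-prime at $\wp \mid p$ of $k$ and invoke Tate local duality, which translates the desired surjectivity of the semi-local corestriction into the injectivity of the corresponding restriction map on $H^{1}(-, T^{*})$, where $T^{*}=\pmb{\mu}_{p^{\infty}}\otimes\chi^{-1}$ is the Cartier dual. The inflation--restriction sequence then identifies the kernel of this restriction as $H^{1}(G, (T^{*})^{G_{F'_{\wp^{*}}}})$, with $G=\Gal(F'_{\wp^{*}}/F_{\wp})$ the Galois group of the relevant finite local extension.

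For (ii), the extension $k(\tau)/k$ is unramified at every $\wp\mid p$ (since $\tau$ is coprime to $p$), so $G$ is a cyclic $p$-group generated by a Frobenius power. For (i), hypothesis (A2) combined with this unramifiedness forces $k_{n}(\tau)/k(\tau)$ to be totally ramified cyclic of order $p^{n}$ at each $\wp'\mid\wp$, with a single prime $\wp^{*}$ above and $G\cong\Gamma_{n}$. Since in both cases $G$ is cyclic of $p$-power order, the desired $H^{1}$ is computable by the standard cyclic-group formula, and I would verify its vanishing using (A1) together with the prime-to-$p$ order of $\chi$: the residual character $\bar{\omega}\bar{\chi}^{-1}$ takes values in the prime-to-$p$ group $\mathbb{F}_{p}^{\times}$, hence is trivial on a $p$-power-index subgroup of $G_{k_{\wp}}$ iff it is trivial on $G_{k_{\wp}}$ itself, and the cyclotomic character controls the residual action of $\Gamma_{n}$ on $\pmb{\mu}_{p^{\infty}}$ via the inertia at $\wp$.

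The main obstacle will be the careful determination of $(T^{*})^{G_{F'_{\wp^{*}}}}$ as a $G$-module (in particular when $\chi$ itself may ramify at primes above $p$) and the subsequent explicit vanishing of the cyclic $H^{1}$; several edge cases (e.g.\ $\pmb{\mu}_{p}\subset k_{\wp}$) would need to be treated. A structurally cleaner alternative, which I would fall back on if the direct cohomological approach becomes entangled, is to establish---paralleling Lemma~\ref{lemma:free for k_infty}---that for $K\in\{k,k(\tau),k_{n}(\tau)\}$ the semi-local cohomology $H^{1}(K_{p},T)$ is free of rank $r$ over $\ZZ_{p}[\Gal(K/k)]$; via Shapiro's lemma the corestrictions in (i) and (ii) would then correspond to the natural quotient maps between such group rings applied to a free module, and the surjectivity would be automatic.
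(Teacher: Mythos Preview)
Your primary approach is correct and is actually more direct than the paper's. Both arguments ultimately reduce to the single local vanishing $H^{0}(k_{\wp},T^{*})=0$ (equivalently, $\omega\chi^{-1}$ is nontrivial on $G_{k_{\wp}}$), together with the observation you make: since $\omega\chi^{-1}\pmod p$ lands in the prime-to-$p$ group $\mathbb{F}_{p}^{\times}$, this vanishing propagates to every finite $p$-power extension of $k_{\wp}$, hence $(T^{*})^{G_{F'_{\wp^{*}}}}=0$ outright. At that point the inflation--restriction kernel is $H^{1}(G,0)=0$ and you are done; there is no need to invoke the cyclic-group formula or to analyse edge cases like $\pmb{\mu}_{p}\subset k_{\wp}$, so your ``main obstacle'' dissolves. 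The paper reaches the same endpoint by a longer route: for (i) it passes through the $\Lambda$-adic tower, identifying the cokernel of $\varprojlim_{n}H^{1}(k_{n}(\tau)_{p},T)\to H^{1}(k(\tau)_{p},T)$ with $H^{2}(k(\tau)_{p},T\otimes\Lambda)[\gamma-1]$ and then killing $H^{2}(k(\tau)_{p},T)$ by local duality; for (ii) it uses semi-local Shapiro and the augmentation sequence for $T_{\tau}=\textup{Ind}_{k(\tau)}^{k}T$, again reducing to $H^{2}(k(\tau)_{p},T)=0$. Your cor/res--duality plus inflation--restriction argument gets there in one step and is the cleaner of the two; the paper's (i) does buy the slightly stronger statement that the map from the full Iwasawa limit is already surjective.

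One caution on your fallback: in the paper the freeness of $H^{1}(K_{p},T)$ over $\ZZ_{p}[\Gal(K/k)]$ (Proposition~\ref{prop:semilocalstructure}) is \emph{deduced from} Lemma~\ref{lem:surj}, via Nakayama applied to the surjections you are trying to prove. So invoking that freeness to establish the surjectivity would be circular unless you supply an independent proof of it, which would itself require essentially the same local input.
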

 \begin{proof}
 The cokernel of the map $$H^1(k(\tau),T\otimes\LL)=\varprojlim_n H^1(k_n(\tau)_p,T)\lra H^1(k(\tau)_p,T)$$ is given by $H^2(k(\tau)_p,T\otimes\LL)[\gamma-1]$, where $\gamma$ is any topological generator of $\Gamma=\Gal(k_\infty/k)$. Since it is known that~(cf. \cite{pr}) $H^2(k(\tau)_p,T\otimes\LL)$ is a finitely generated $\ZZ_p$-module, it follows that $$H^2(k(\tau)_p,T\otimes\LL)[\gamma-1]=0 \iff H^2(k(\tau)_p,T\otimes\LL)/(\gamma-1)=0.$$ Since the cohomological dimension of the absolute Galois group of any local field is 2, $$H^2(k(\tau)_p,T\otimes\LL)/(\gamma-1)\cong H^2(k(\tau)_p,T\otimes\LL/(\gamma-1))=H^2(k(\tau)_p,T).$$ It therefore suffices to check that $$H^2(k(\tau)_p,T):=\bigoplus_{v|p}H^2(k(\tau)_v,T)=0,$$ which, via local duality is equivalent to checking that $(T^*)^{G_{k(\tau)_v}}=0$ for each $v|p$.

Write $\mathcal{D}_v$ for the decomposition group at $v\mid p$ inside $\Gal(k(\tau)/k):=\Delta_\tau$. We may identify $\mathcal{D}_v \subset \Delta_\tau$ by the local Galois group $\Gal(k(\tau)_v/k_\wp)$ where $\wp\subset k$ is the prime below $v$. Since $\Delta_\tau$ is generated by inertia groups at the primes dividing $\tau$, all of which act trivially on $T^*$ (by the choice of $\tau$'s). Hence, it follows that $$(T^*)^{G_{k(\tau)_v}}=(T^*)^{G_{k_{\wp}}}.$$ Note that $T^*=\pmb{\mu}_{p^{\infty}}\otimes\chi^{-1}$, so it follows at once that%\footnote{Because $\chi$ is a character of finite order and $G_{k_\wp}$ acts on $\pmb{\mu}_{p^{\infty}}$ via the cyclotomic character which is of infinite order.}
$(T^*)^{G_{k_{\wp}}}=0$, and thus (i) is proved.

Set $T_\tau:=\textup{Ind}_{k(\tau)}^{k}T$. The semi-local version of Shapiro's lemma (which is explained in~\cite[\S A.5]{r00}) shows that
$$H^1(k(\tau)_p, T) \cong H^1(k_p,T_\tau).$$
The corestriction map $$\mathbf{N}_\tau: \,H^1(k_p,T_\tau)\cong H^1(k(\tau)_p,T)\lra H^1(k_p,T)$$ is simply induced from the augmentation sequence $$0\lra \mathcal{A}_\tau\cdot T_\tau\lra T_\tau\lra T\lra 0,$$ where $\mathcal{A}_\tau$ is the augmentation ideal of the local ring $\ZZ_p[\Delta_\tau]$. The argument above shows that the cokernel of $\bf{N}_\tau$ is dual to $H^0(k_p,(\mathcal{A}_\tau\cdot T_\tau)^*).$ Furthermore, $$(\mathcal{A}_\tau\cdot T_\tau)^{*}: =\Hom(\mathcal{A}_\tau\cdot T_\tau,\pmb{\mu}_{p^{\infty}})=\Hom(\mathcal{A}_\tau\cdot T_\tau,\QQ_p/\ZZ_p)\otimes\ZZ_p(1),$$ and $\Hom(\mathcal{A}_\tau\cdot T_\tau,\QQ_p/\ZZ_p)=\mathcal{A}_\tau\cdot\Hom(T_\tau,\QQ_p/\ZZ_p)$, we thence see that
$$H^0(k_p,(\mathcal{A}_\tau\cdot T_\tau)^*) \hookrightarrow H^0(k_p,T_\tau^{*}).$$
It therefore suffices to show that $H^0(k_p,T_\tau^*)=0$. By local duality, this is equivalent to proving $H^2(k_p,T_\tau)=0$, which, by the semi-local version of Shapiro's lemma, is equivalent to checking $H^2(k(\tau)_p,T)=0$. This final statement is equivalent to the assertion that $H^0(k(\tau)_p,T^*)=0$ by local duality. This, however, has been already verified in the third paragraph of this proof. This completes the proof of (ii).

(iii) clearly follows from (i) and (ii).
  \end{proof}
 \begin{prop}
 \label{prop:semilocalstructure}
  For every $\tau\in \NN(\PP)$,
 \begin{enumerate}
\item[(i)]  the semi-local cohomology group $H^1(k(\tau)_p,T)$ is a free $\ZZ_p[\Delta_\tau]$-module of rank $r$,
 \item[(ii)] for every $n\in\ZZ_{\geq 0}$, the $\ZZ_p[\Gamma_n\times\Delta_\tau]$-module $H^1(k_n(\tau)_p,T)$ is free of rank $r$.
 \end{enumerate}
 \end{prop}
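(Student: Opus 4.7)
The plan is to establish (ii); statement (i) follows as the case $n=0$. Set $R := \ZZ_p[\Gamma_n \times \Delta_\tau]$, a finite local ring since $\Gamma_n \times \Delta_\tau$ is a $p$-group, and let $M := H^1(k_n(\tau)_p, T)$. The strategy is to verify via Nakayama's lemma that $M$ is a free $R$-module of rank $r$.

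Three ingredients are needed. First, $M$ is $\ZZ_p$-torsion free: this follows at once from the $\Hom(-,\ZZ_p)$-description employed in Remark~\ref{rem:alternative proof}, adapted to the field $k_n(\tau)$. Second, $M$ has $\ZZ_p$-rank $rp^n|\Delta_\tau|$. Under \textup{(A1)} both $H^0(k_n(\tau)_p,T)$ (since $\chi$ has order prime to $p$, so remains nontrivial on every decomposition group $G_{k_n(\tau)_v}$) and $H^2(k_n(\tau)_p,T)$ (verified in the proof of Lemma~\ref{lem:surj}) vanish, and Tate's local Euler--Poincar\'e formula then yields $\operatorname{rank}_{\ZZ_p} M = [k_n(\tau):\QQ] = rp^n|\Delta_\tau|$. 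Third, and most subtly, one must show $\dim_{\mathbb{F}_p} M/\mathfrak{m}M = r$, where $\mathfrak{m} = (p) + I$ is the maximal ideal of $R$ (with $I$ the augmentation ideal). Once all three hold, Nakayama furnishes a surjection $R^r \twoheadrightarrow M$; matching $\ZZ_p$-ranks and torsion-freeness force this to be an isomorphism.

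The third ingredient is the main obstacle. Killing $p$ via $0\to T\xrightarrow{p} T\to T/pT\to 0$ and using the vanishing of $H^0(k_n(\tau)_p, T/pT)$ (inherited from that of $T$ since $\chi$ has prime-to-$p$ order), one identifies $M/\mathfrak{m}M$ with the $\Gamma_n\times\Delta_\tau$-coinvariants of $H^1(k_n(\tau)_p, T/pT)$. Corestriction yields a surjection onto $H^1(k_p, T/pT) \cong \mathbb{F}_p^r$ (by Euler--Poincar\'e), giving $\dim M/\mathfrak{m}M \ge r$. However, the Hochschild--Serre spectral sequence naturally produces $H^1(k_p,T/pT) \xrightarrow{\sim} H^1(k_n(\tau)_p,T/pT)^{\Gamma_n\times\Delta_\tau}$, controlling \emph{invariants} rather than coinvariants---and these can differ in $\mathbb{F}_p$-dimension when the representation is not $\mathbb{F}_p[\Gamma_n\times\Delta_\tau]$-free.

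To bypass this asymmetry, the cleanest route is Iwasawa-theoretic: first prove that $H^1(k_p, T\otimes\LL[\Delta_\tau])$ is free of rank $r$ over the semi-local Iwasawa algebra $\LL[\Delta_\tau]$. This is an extension of Lemma~\ref{lemma:free for k_infty} whose proof carries over essentially verbatim, as the hypotheses of the structure theorem \cite[Theorem A.8]{kbbiwasawa} remain valid: \textup{(A1)} forces the vanishing of $(T\otimes\LL[\Delta_\tau])^{H_{k_\wp}}$ at each $\wp\mid p$, while (A2) guarantees that each $G_{k_\wp}$ still surjects onto the cyclotomic $\Gamma$. Specializing modulo $(\gamma^{p^n}-1)\subset\LL$ and using the surjectivity $H^1(k_p, T\otimes\LL[\Delta_\tau])\twoheadrightarrow H^1(k_n(\tau)_p, T)$---a consequence of the vanishing of $H^2(k_p, T\otimes\LL[\Delta_\tau])$ by the same local-duality argument as in Lemma~\ref{lem:surj}---freeness is preserved, and (ii) follows at once, with (i) as the case $n=0$.
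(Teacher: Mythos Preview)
Your overall framework---$\ZZ_p$-torsion freeness, the Euler--Poincar\'e rank count, and Nakayama---is exactly the paper's. The divergence is in how you handle the third ingredient, and there the proposed Iwasawa-theoretic detour has a genuine gap.

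You claim that the proof of Lemma~\ref{lemma:free for k_infty} carries over verbatim to show $H^1(k_p,T\otimes\LL[\Delta_\tau])$ is free of rank $r$ over $\LL[\Delta_\tau]$. But the structure theorem you invoke (\cite[Theorem~A.8]{kbbiwasawa}) is a statement about $\LL$-modules: applied to the representation $T\otimes\ZZ_p[\Delta_\tau]$, it yields at best that $H^1(k_p,T\otimes\LL[\Delta_\tau])$ is $\LL$-free of rank $r|\Delta_\tau|$, not that it is $\LL[\Delta_\tau]$-free of rank $r$. Passing from $\LL$-freeness to $\LL[\Delta_\tau]$-freeness requires precisely the same Nakayama-over-a-group-ring argument you were trying to sidestep, so the detour does not avoid the difficulty---it merely relocates it to the Iwasawa tower.

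The paper's route is more direct and uses an input you already have in hand. The corestriction map $M\to H^1(k_p,T)$ is, via Shapiro's lemma, induced by the augmentation $T_\tau\to T$; its kernel is the image of $H^1(k_p,\mathcal{A}_\tau T_\tau)$. The key observation is that this image is \emph{exactly} $\mathcal{A}_\tau M$, not merely a supermodule of it. For $\Delta_\tau$ cyclic with generator $\delta$, the exact sequence $0\to T\to T_\tau\xrightarrow{\delta-1}\mathcal{A}_\tau T_\tau\to 0$ together with $H^2(k_p,T)=0$ (which you invoke in your rank computation) shows $H^1(k_p,T_\tau)\xrightarrow{\delta-1}H^1(k_p,\mathcal{A}_\tau T_\tau)$ is surjective, whence the image of the latter in $M$ is $(\delta-1)M=\mathcal{A}_\tau M$; the general case follows by induction on the number of prime factors of $\tau$. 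This gives $M/\mathcal{A}_\tau M\cong H^1(k_p,T)$ on the nose, so $M/\mathfrak{m}M\cong\mathbb{F}_p^r$ and Nakayama applies immediately. Your observation that Hochschild--Serre controls invariants rather than coinvariants is correct, but in this situation the vanishing of $H^2$ forces the coinvariants to agree with $H^1(k_p,T)$ as well.
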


 \begin{proof}
 We start with the remark that $H^1(k(\tau)_p,T)$ is a free $\ZZ_p$-module of rank $r\cdot|\Delta_\tau|$. Indeed, this may be proved by the argument of Lemma~\ref{lemma:free for k} (or alternatively, and more directly, following the argument of Remark~\ref{rem:alternative proof}). Further, we know thanks to Lemma~\ref{lem:surj} that the map $$H^1(k(\tau)_p,T)\lra H^1(k_p,T)$$(which could be thought of as the reduction modulo the augmentation ideal $\mathcal{A}_\tau$) is surjective. Nakayama's lemma and Lemma~\ref{lemma:free for k} therefore imply that $H^1(k(\tau)_p,T)$ is generated by (at most) $r$ elements over $\ZZ_p[\Delta_\tau]$. Let $\frak{B}=\{x_1, x_2,\dots,x_r\}$ be any set of such generators. To prove (i), it suffices to check that the $x_i$'s do not admit any $\ZZ_p[\Delta_\tau]$-linear relation. Assume contrary, and suppose there is a relation
 \be\label{eqn:relation}\sum_{i=1}^{r}\alpha_i x_i=0, \,\,\, \alpha_i \in \ZZ_p[\Delta_\tau].
 \ee
 Define
 $$S=\{\delta x_j: \delta \in \Delta_\tau, 1\leq j\leq r\},$$
 and note that $S$ generates (as a $\ZZ_p$-module) $H^1(k(\tau)_p,T)$ by our assumption on $\frak{B}$, and also that  $|S|=r\cdot|\Delta_\tau|=\textup{rank}_{\ZZ_p}(H^1(k(\tau)_p,T)).$ Equation (\ref{eqn:relation}) can be rewritten as $$\sum_{\delta,j} a_{\delta,j}\cdot \delta x_j=0$$ with $a_{\delta,j} \in \ZZ_p$. Since we already know that $H^1(k(\tau)_p,T)$ is $\ZZ_p$-torsion free, we may assume without loss of generality that $a_{\delta_0,j_0} \in \ZZ_p^\times$ for some $\delta_0,j_0$. This in return implies that $$\delta_0x_{j_0} \in \textup{span}_{\ZZ_p}(S-\{\delta_0x_{j_0}\}),$$ hence $H^1(k(\tau)_p,T)$ is generated by $S-\{\delta_0x_{j_0}\}$. This, however, is a contradiction since we already know that the $\ZZ_p$-rank of $H^1(k(\tau)_p,T)$ is $r\cdot|\Delta_\tau|=|S|$, hence it cannot be generated by $|S|-1$ elements over $\ZZ_p$. The proof of (i) now follows.

 One proves (ii)  in an identical fashion, now considering the \emph{augmentation map}
 $$H^1(k_n(\tau)_p,T)\lra H^1(k(\tau)_p,T),$$
  which is surjective thanks to Lemma~\ref{lem:surj}.
 \end{proof}

 Define the field $\frak{F}$ as the compositum of the fields $k(\tau)$, $$\frak{F}=\bigcup_{\tau \in \NN(\PP)}k(\tau),$$ as $\tau$ runs through the set $\NN$. We set $\pmb{\Delta}:=\Gal(\frak{F}/k)$.
 \begin{cor}
 \label{cor:thick free}
 The $\ZZ_p[[\Gamma\times\pmb{\Delta}]]$-module $\varprojlim_{n,\tau} H^1(k_n(\tau)_p,T)$ is free of rank $r$, and the natural projection $$\varprojlim_{n,\tau} H^1(k_n(\tau)_p,T) \lra H^1(k_{m}(\eta)_p,T)$$ is surjective for every $m \in \ZZ_{\geq 0}$ and $\eta \in \NN$.
 \end{cor}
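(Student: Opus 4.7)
The plan is to bootstrap from Proposition~\ref{prop:semilocalstructure} by producing a system of compatible bases throughout the entire tower. Set $\mathcal{R}_{n,\tau}:=\ZZ_p[\Gamma_n\times\Delta_\tau]$ and $M_{n,\tau}:=H^1(k_n(\tau)_p,T)$, and write $M:=\varprojlim_{n,\tau}M_{n,\tau}$ and $\mathcal{R}:=\varprojlim_{n,\tau}\mathcal{R}_{n,\tau}=\ZZ_p[[\Gamma\times\pmb{\Delta}]]$. First I fix a $\ZZ_p$-basis $\{e_1,\dots,e_r\}$ of $H^1(k_p,T)$, which is free of rank $r$ by Lemma~\ref{lemma:free for k}. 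The inverse system $\{M_{n,\tau}\}$, ordered by $n\leq n^\prime$ and $\tau\mid\tau^\prime$, has surjective transition maps: each such transition factors as a composition of the three types of corestriction maps proved to be surjective in Lemma~\ref{lem:surj}. The Mittag--Leffler condition is therefore satisfied, which immediately yields the second claim of the corollary, namely that the projection $M \lra M_{m,\eta}$ is surjective for every $m\geq 0$ and $\eta\in\NN$.

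Using surjectivity once more, I successively lift each $e_i$ through the tower, obtaining elements $\tilde{e}_i\in M$ whose image in $H^1(k_p,T)$ is $e_i$. I then claim that for every $(n,\tau)$ the images $\tilde{e}_i^{(n,\tau)}$ of $\tilde{e}_i$ in $M_{n,\tau}$ form a $\mathcal{R}_{n,\tau}$-basis of $M_{n,\tau}$. This is a direct repetition of the argument in the proof of Proposition~\ref{prop:semilocalstructure}(i): since the corestriction $M_{n,\tau}\to H^1(k_p,T)$ is surjective (Lemma~\ref{lem:surj}(iii)) and $\mathcal{R}_{n,\tau}$ is a local ring with residue field $\FF_p$ (both $\Gamma_n$ and $\Delta_\tau$ being $p$-groups), Nakayama's lemma guarantees that any $r$ lifts of $\{e_1,\dots,e_r\}$ generate $M_{n,\tau}$ over $\mathcal{R}_{n,\tau}$; a $\ZZ_p$-rank count (using the fact that $M_{n,\tau}$ is $\ZZ_p$-free of rank $r\cdot|\Gamma_n\times\Delta_\tau|$, again by the argument of Remark~\ref{rem:alternative proof}) then forces these $r$ generators to be $\mathcal{R}_{n,\tau}$-linearly independent.

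Since the $\{\tilde{e}_i^{(n,\tau)}\}$ are bases at every finite level and are compatible under transition maps by construction, they induce an isomorphism of inverse systems $\{M_{n,\tau}\}\cong\{\mathcal{R}_{n,\tau}^r\}$ (sending $\tilde{e}_i^{(n,\tau)}$ to the $i$-th standard basis vector). Taking the inverse limit yields $M\cong\mathcal{R}^r$, completing the proof. The one point demanding a small amount of care is verifying Mittag--Leffler for the doubly-indexed system, but this is immediate from Lemma~\ref{lem:surj} applied to any cofinal sequence of pairs $(n,\tau)$; thus no genuine obstacle arises beyond what is already handled in Proposition~\ref{prop:semilocalstructure}.
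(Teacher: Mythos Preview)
Your proof is correct and is exactly the argument the paper has in mind when it writes ``Immediate after Proposition~\ref{prop:semilocalstructure}'': you lift a basis compatibly through the tower using the surjectivity of the transition maps and then invoke Nakayama plus a rank count at each finite level, just as in the proof of that proposition. The only minor imprecision is that Lemma~\ref{lem:surj} as stated treats only the three specific corestriction maps listed there rather than the general transition $M_{n',\tau'}\to M_{n,\tau}$, but its proof applies verbatim with $k$ replaced by any $k_n(\tau)$, so this is not a genuine gap.
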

 \begin{proof}
 Immediate after Proposition~\ref{prop:semilocalstructure}.
 \end{proof}

 \begin{define}
 \label{def:thick line}
 Fix a $\ZZ_p[[\Gamma\times\pmb{\Delta}]]$-rank-one  direct summand $\pmb{\mathcal{L}}$ of $\varprojlim_{n,\tau} H^1(k_n(\tau)_p,T)$. Denote its image under the (surjective) map $$\varprojlim_{n,\tau} H^1(k_n(\tau)_p,T) \lra H^1(k_m(\eta)_p,T)$$ by $\al_m^{\eta}$. When $\eta=1$, we simply write $\al_m$ instead of $\al_m^1$; and when $m=0$ we write $\al$ for $\al_0$. Finally, let $\all$ denote the image of $\pmb{\mathcal{L}}$ under the projection $$\varprojlim_{n,\tau} H^1(k_n(\tau)_p,T) \lra \varprojlim_{n} H^1((k_n)_p,T) =H^1(k_p,T\otimes\LL).$$ We fix generators $\pmb{\varphi}, \varphi_m^{\eta}, \varphi_m, \varphi$ and $\Phi$ of $\pmb{\al}, \al_m^\eta,\al_m, \al$ and $\all$, respectively; such that
 $$\pmb{\varphi} \mapsto \varphi_m^\eta \mapsto \varphi_m,  \hbox{ and}$$
 $$\pmb{\varphi} \mapsto \Phi \mapsto \varphi$$
 under the projection maps we mentioned above.
 \end{define}

 As in Definition~\ref{def:thick line}, we could start with a choice of $\pmb{\al}$, which in return fixes $\all$ and $\al$. Alternatively, we could start with an arbitrary $\al$ (and $\all$) as we did in \S\ref{sub:locp} and show (using linear algebra) that there is a rank one direct summand $\pmb{\al} \subset \varprojlim_{n,\tau} H^1(k_n(\tau)_p,T)$ which projects down to $\al$ (and $\all$), as in   Definition~\ref{def:thick line}.

\begin{rem}
\label{rem:differentLs}
In Definition~\ref{def:KSzp} (resp., Definition~\ref{def:KSboth}) above, we give a definition of a $\al$-restricted Kolyvagin system (resp., $\all$-restricted $\LL$-adic Kolyvagin system). When $\pmb{\al}$ is above, so that the line $\pmb{\al}$ projects down to $\al$ (and respectively, down to $\all$), we also call these \emph{$\pmb{\al}$-restricted Kolyvagin systems.}
\end{rem}
 By Remark~\ref{rem:semi local identification}, we may identify $\varprojlim_{n,\tau} H^1(k_n(\tau)_p,T)$ by the module $\varprojlim_{n,\tau}\Hom\left(U_{L_n(\tau)}^\chi,\ZZ_p\right)$, where we recall that $U_{L_n(\tau)}$ stands for the local units inside $L_n(\tau)\otimes\QQ_p$. We define, for each $m\geq0$ and $\eta \in \NN$, a homomorphism  $\lambda_{m}^{\eta} \in \Hom\left(U_{L_m(\eta)}^\chi,\ZZ_p\right)$ as the composite $$\lambda_{m}^{\eta}:=\varphi_m^{\eta}\circ\prod_{\qq|\eta}(-\textup{Fr}_{\qq}).$$
  We further set $\gimel_m^{\eta}$ for the (free of rank one) $\ZZ_p[\Gamma_m\times\Delta_\eta]$-module generated by $\lambda_{m}^{\eta}$. Clearly,
   \be\label{eqn:modifiedhomsrelation}\gimel_m^\eta=\prod_{\qq|\eta}(-\textup{Fr}_{\qq}^{-1})\al^{\eta}_m=\al_m^\eta,
   \ee
   where the final equality is because $\al_m^\eta$ is a $\ZZ_p[\Gamma_m\times\Delta_\eta]$-stable submodule of
   \be
   \label{eqn:identifywithhomsonunits}
   H^1(k_m(\eta),T)\cong \Hom\left( U_{L_m(\eta)}^\chi,\ZZ_p\right).
   \ee

  When $\eta$ is fixed and $m$ varies, note that the collection $\{\lambda_{m}^{\eta}\}_{_m}$ forms a projective system with respect to norm maps\footnote{Under the identification (\ref{eqn:identifywithhomsonunits}) above, the norm maps on the cohomology are induced from the inclusions  $U_{L_m(\eta)}^\chi \hookrightarrow U_{L_{m^{\prime}}(\eta)}^\chi$, $m^\prime\geq m$.}. When $\eta=1$, we write $\lambda_m$ (resp., $\gimel_m$) instead of $\lambda_m^\eta$ (resp., $\gimel_m^\eta$). Also when $m=0$, we simply write $\lambda$ (resp., $\gimel$) for $\lambda_0$ (resp., $\gimel_0$).

  We finally remark that $\lambda_m=\varphi_m$ for all $m$, by definition.
  \begin{prop}
  \label{prop:constructhoms}
For $\eta, \eta\qq \in \NN$,  and any $m^\prime \geq m$,
\begin{itemize}
\item[(i)]$\lambda_{m^{\prime}}^{\eta\qq}|_{_{U_{L_m(\eta)}^\chi}}=\lambda_m^\eta \circ(-\textup{Fr}_{\qq}),$
\item[(ii)]$\lambda_{m^{\prime}}^{\eta}|_{_{U_{L_m(\eta)}^\chi}}=\lambda_m^\eta.$
\end{itemize}
  \end{prop}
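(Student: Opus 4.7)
The plan is to reduce both parts to the compatibility of the system $\{\varphi_m^\eta\}$ with corestriction maps on semi-local cohomology, after a careful interpretation of the Frobenius factors $\Fr_\qq$ appearing in the definition of $\lambda_m^\eta$.

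First I will fix the key identification. The argument of Remark~\ref{rem:alternative proof}, applied over each level $k_m(\eta)$ in place of $k$, together with hypothesis (A1), yields a canonical identification $H^1(k_m(\eta)_p, T) \cong \Hom(U_{L_m(\eta)}^\chi, \ZZ_p)$ used in Remark~\ref{rem:semi local identification}. Under this identification, local class field theory turns the corestriction $H^1(k_{m'}(\eta')_p, T) \to H^1(k_m(\eta)_p, T)$ (for $(m,\eta) \leq (m', \eta')$) into restriction of homomorphisms along the natural inclusion $U_{L_m(\eta)}^\chi \hookrightarrow U_{L_{m'}(\eta')}^\chi$; this is the familiar fact that transfer on abelianized Galois groups dualizes under LCFT to the inclusion of local multiplicative groups, and passing to $\chi$-isotypic parts is exact since $|\Delta|$ is prime to $p$. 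Because $\varphi_m^\eta$ is by construction the image of the single element $\pmb{\varphi} \in \pmb{\mathcal{L}}$ under the projection from $\varprojlim_{n,\tau} H^1(k_n(\tau)_p, T)$ (and this inverse system is formed with respect to corestrictions), this yields the fundamental compatibility
\[
\varphi_{m'}^{\eta'}\big|_{U_{L_m(\eta)}^\chi} = \varphi_m^\eta \qquad \text{whenever } (m,\eta)\leq(m',\eta').
\]

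Next I will pin down the meaning of the Frobenius factors. For $\qq \mid \eta$, the prime $\qq$ is ramified in $L_m(\eta)/k$, so $\Fr_\qq$ is not a priori a well-defined element of $\Gal(L_m(\eta)/k)$. However, since $k(\qq)/k$ is totally tamely ramified at $\qq$ while $L_m(\eta/\qq)/k$ is unramified at $\qq$, these two extensions are linearly disjoint over $k$ and one obtains the canonical product decomposition
\[
\Gal(L_m(\eta)/k) = \Gal(L_m(\eta/\qq)/k) \times \Delta_\qq,
\]
with $\Delta_\qq$ the inertia subgroup at $\qq$. The canonical Frobenius lift $\Fr_\qq \mapsto (\Fr_\qq, \mathrm{id})$ then endows $U_{L_m(\eta)}^\chi$ with a well-defined action of $\Fr_\qq$; a diagram chase shows these lifts are compatible under the natural restriction maps $\Gal(L_{m'}(\eta')/k) \twoheadrightarrow \Gal(L_m(\eta)/k)$. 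Consequently, for any $u \in U_{L_m(\eta)}^\chi$, the element $\prod_{\qq'|\eta}(-\Fr_{\qq'})\,u$ lies in $U_{L_m(\eta)}^\chi$, and is the same whether computed there or inside $U_{L_{m'}(\eta')}^\chi$ via the inclusion.

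With these two ingredients both claims follow by unfolding definitions. For (ii) and $u \in U_{L_m(\eta)}^\chi$, the above gives
\[
\lambda_{m'}^\eta(u) = \varphi_{m'}^\eta\Big(\prod_{\qq'|\eta}(-\Fr_{\qq'})\,u\Big) = \varphi_m^\eta\Big(\prod_{\qq'|\eta}(-\Fr_{\qq'})\,u\Big) = \lambda_m^\eta(u).
\]
For (i) one factors $\prod_{\qq'|\eta\qq}(-\Fr_{\qq'}) = (-\Fr_\qq)\cdot\prod_{\qq'|\eta}(-\Fr_{\qq'})$; applying the same argument to the element $(-\Fr_\qq)\,u \in U_{L_m(\eta)}^\chi$ (using that $\qq\nmid\eta$ makes $\Fr_\qq$ an honest element of $\Gal(L_m(\eta)/k)$) then yields $\lambda_{m'}^{\eta\qq}(u) = \lambda_m^\eta((-\Fr_\qq)u)$. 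The principal technical point requiring care is the first paragraph: the LCFT-based identification of corestrictions with restrictions must be verified to survive the passage to $\chi$-isotypic components; once this is in place, the remainder is routine bookkeeping with Frobenius lifts.
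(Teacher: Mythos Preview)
Your argument is correct and follows the same route as the paper's proof: both reduce to the norm-coherence identity $\varphi_{m'}^{\eta'}\big|_{U_{L_m(\eta)}^\chi}=\varphi_m^\eta$ (which the paper simply cites as ``the norm coherence property of the collection $\{\varphi_m^\eta\}$'') and then unwind the definition $\lambda_m^\eta=\varphi_m^\eta\circ\prod_{\qq\mid\eta}(-\Fr_\qq)$. Your treatment is in fact more careful than the paper's, which calls the result ``evident'': you spell out why corestriction becomes restriction along $U_{L_m(\eta)}^\chi\hookrightarrow U_{L_{m'}(\eta')}^\chi$ via local class field theory, and you address the ambiguity of $\Fr_\qq$ for $\qq\mid\eta$ by using the splitting $\Delta_\eta\cong\Delta_{\eta/\qq}\times\Delta_\qq$, points the paper leaves implicit.
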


  \begin{proof}
  This is evident, since by construction
  \begin{align*}
  \lambda_{m^{\prime}}^{\eta\qq}|_{_{U_{L_m(\eta)}^\chi}}&=\varphi_{m^{\prime}}^{\eta\qq} \circ\prod_{\varpi|\eta}(-\textup{Fr}_\varpi)(-\textup{Fr}_{\qq})|_{_{U_{L_m(\eta)}^\chi}}\\
  &=\varphi_{m}^{\eta} \circ\prod_{\varpi|\eta}(-\textup{Fr}_\varpi)(-\textup{Fr}_{\qq})|_{_{U_{L_m(\eta)}^\chi}}\\
  &=\lambda_{m}^{\eta} \circ(-\textup{Fr}_{\qq})|_{_{U_{L_m(\eta)}^\chi}},
    \end{align*} where the second equality is because $$\varphi_{m^\prime}^{\eta\qq} |_{_{U_{L_m(\eta)}^\chi}}=\varphi_{m}^{\eta} |_{_{U_{L_m(\eta)}^\chi}}$$ by the norm coherence property of the collection $\{\varphi_{m}^{\eta}\}_{_{\eta,m}}$. This completes (i), and (ii) is proved similarly.

  \end{proof}

  Let $\textbf{c}^{\textup{St}}=\left\{c^{\textup{St}}_{k_n(\tau)}\right\} \in \ES(T)$ be the Euler system constructed via Theorem~\ref{thm:mainstickes} using the Stickelberger elements and the collection $\{\lambda_m^\eta\}$ we defined above. In the next section, we will use $\textbf{c}^{\textup{St}}$ to construct a  Kolyvagin system for the Selmer triple $(T,\FF_{\al},P)$ (resp., for the triple $(T\otimes\LL,\FF_{\all},\PP)$).

 \begin{rmkdef}
 \label{rmkdef:XrestrictedES}
 Let $M$ be any $G_k$-representation which is free of finite rank as a $\ZZ_p$-module and which is unramified outside a finite set of places of $k$. Let $\kk_M$ be a large abelian extension of $k$ defined as in~\cite[Definition 1.1]{r00}. Suppose $\frak{S} \subset \varprojlim_{K\subset\kk_M} H^1(K_p,M)$ is any submodule. Let $\frak{S}_K \subset H^1(K_p,M)$ denote the image of $\frak{S}$ under the obvious projection map. We say that an Euler system  $$\{c_K\}_{K\subset\kk_M}=\mathbf{c} \in \ES(M,\kk_M)=\ES(M)$$ is \emph{$\frak{S}$-restricted} if $\textup{loc}_p(c_K) \in \frak{S}_K$ for any finite extension $K\subset \kk_M$ of $k$. The collection of $\frak{S}$-restricted Euler systems for the pair $(M,\kk_M)$ will be denoted by $\ES_{\frak{S}}(M)=\ES_{\frak{S}}(M,\kk_M)$.
  \end{rmkdef}
 \begin{example}
 \label{ex:Lrestricted}
 \begin{enumerate}
 \item Let $\pmb{\al} \subset \varprojlim_{n,\tau}H^1(k_n(\tau),T)$  be as in Definition~\ref{def:thick line}. It is easy to see that the Euler system $\mathbf{c}^{\textup{St}}$ we construct above is an $\pmb{\al}$-restricted Euler system.
 \item Consider the even, non-trivial character $\omega\chi^{-1}:=\psi$ of $G_k$, and set $T^\prime:=\ZZ_p(1)\otimes\psi^{-1}$. Only in this example, we let $\all$ denote a fixed $\ZZ_p[[\Gamma\times\pmb{\Delta}]]$-rank-one direct summand of $\varprojlim_{n,\tau}H^1(k_n(\tau),T^\prime)$. The author~\cite[\S3]{kbbiwasawa} has constructed an $\all$-restricted Euler system for the pair $(T^\prime,\kk)$, starting from the conjectural Rubin-Stark elements.

 Later in \S\ref{subsec:twiststark}, we will construct another $\all$-restricted Euler system for $(T^\prime,\kk)$, applying a formal twisting argument on the Euler system $\textbf{c}^{\textup{St}}$. We will also compare this Euler system with the one coming from the Rubin-Stark elements,  using the ``rigidity" of the collection of $\LL$-adic Kolyvagin systems.
 \end{enumerate}
  \end{example}

 \subsection{Kolyvagin systems for modified Selmer groups (bis)}
  \label{subsec:kolsys2}
 Recall the sets $\PP_j\subset \PP$ and $\NN_j\subset \NN$. For notational simplicity, we write $\mathbb{T}:=T\otimes\LL$ from now on, and for a fixed topological generator $\gamma\in\Gamma$, we set $\gamma_n=\gamma^{p^n}$. Finally, let $\mathcal{M}$ be the maximal ideal of the ring $\LL$.

 \begin{define}
\label{def:alternativeKS}
For $\mathbb{F}=\FF_{\LL}$ or $\FF_{\all}$, we set $$\overline{\KS}^{\prime}(\mathbb{T},\mathbb{F},\PP):=\varprojlim_{m,n}\varinjlim_j \textup{\KS}(\mathbb{T}/(p^m,\gamma_{n}-1)\mathbb{T},\mathbb{F}, \PP_{j}),$$
where $\textup{\KS}(\mathbb{T}/(p^m,\gamma_{n}-1)\mathbb{T},\mathbb{F}, \PP_{j})$ is the $\LL/(p^m,\gamma_n-1)$-module of Kolyvagin systems (in the sense of~\cite[Definition 3.1.3]{mr02}) for the propagated Selmer structure $\mathbb{F}$ on the quotient $\mathbb{T}/(p^m,\gamma_{n}-1)\mathbb{T}$.
\end{define}
\begin{rem}
\label{1-kolsys} We introduced the module $\overline{\KS}^{\prime}(\mathbb{T},\FF_{\LL},\PP)$ above because, after applying Kolyvagin's descent procedure~\cite[\S IV]{r00}, one directly obtains elements of $\overline{\KS}^{\prime}(\mathbb{T},\FF_{\LL},\PP)$. On the other hand, it is not hard to see for $\mathbb{F}=\FF_{\LL}$ or $\FF_{\all}$ that the module  $\overline{\KS}^{\prime}(\mathbb{T},\mathbb{F},\PP)$ defined above is naturally isomorphic to the module $\overline{\KS}(\mathbb{T},\mathbb{F},\PP)$ of Definition~\ref{def:KSboth}, using the fact that each of the collections $\{p^m,\gamma_n-1\}_{_{m,n}}$ and $\{p^m,\xx^n\}_{_{m,n}}$ forms a base of neighborhoods at $zero$.  Furthermore, using the fact that the collection $\{\mathcal{M}^\alpha\}_{\alpha\in \ZZ^+}$ also forms a base of neighborhoods at $zero$, one may identify these two modules Kolyvagin systems by the generalized module of Kolyvagin systems defined in~\cite[Definition 3.1.6]{mr02}. By slight abuse,  we will write  $\overline{\KS}(\mathbb{T},\mathbb{F},\PP)$ for any of the three modules of Kolyvagin systems given by three different definitions (i.e., by Definition~\ref{def:KSboth} and \ref{def:alternativeKS} here; and~\cite[Definition 3.1.6]{mr02}). For our purposes in this section, we will use Definition~\ref{def:alternativeKS} to define this module.
 \end{rem}

 Write $$\left\{\{\kappa^{\textup{St}}_{\tau,m}\}_{_{\tau \in \NN_m}}\right\}_m=\pmb{\kappa}^{\textup{St}} \in \overline{\KS}(T,\FFc,\PP)$$ (resp.,
 $$\left\{\left\{\kappa^{\textup{St}_\infty}_{\tau}(m,n)\right\}_{\tau \in \NN_{m+n}}\right\}_{m,n}=\pmb{\kappa}^{\textup{St}_\infty} \in \overline{\KS}(\mathbb{T},\FF_{\LL},\PP)$$
 for the Kolyvagin systems obtained via the descent procedure of~\cite[\S4]{r00} applied on the Euler system $\textbf{c}^{\textup{St}}=\{c^{\textup{St}}_{k_n(\tau)}\}$. We know that
$$
\xymatrix@R=.2cm @C=.1cm{
{\kappa}^{\textup{St}}_1\ar@{=}[d]  & \ar@{=}[l]_(.83){\textup{def}} \varprojlim_{m} \kappa_{1,m}^{\textup{St}} \in \varprojlim_{m}H^1(k,{T}/p^m{T}) = H^1(k,{T}) \\
c_k^{\textup{St}}&\stackrel{\textup{def}}{=}\lambda\circ
\theta_L^{\chi}=\varphi\circ \theta_L^{\chi}\in \Hom
\left(\left(\mathbb{A}_L^{\times}/L^\times\right)^{\chi},\ZZ_p\right)=H^1(k,T)
}$$
 and
 $$\xymatrix@R=.2cm @C=.2cm{ {\kappa}^{\textup{St}_\infty}_1\ar@{=}[d]  & \ar@{=}[l]_(.89){\textup{def}} \varprojlim_{m,n} \kappa_{1}(m,n) \in \varprojlim_{m,n}H^1(k,\mathbb{T}/(p^m,\gamma_n-1)\mathbb{T})=H^1(k,\mathbb{T})\\
 \{c_{k_n}^{\textup{St}}\}_{_n}&\ar@{=}[l]_(.85){\textup{def}}\{\lambda_n\circ \theta_{L_n}^{\chi}\}_{_n}=\{\varphi_n\circ \theta_{L_n}^{\chi}\}_{_n} \in \varprojlim_n H^1(k_n,T)=H^1(k,\mathbb{T}).
}$$
\begin{rem}
\label{shapiro}
For every (rational) prime $\ell$, Shapiro's lemma shows that
\be\label{eqn:shapiroglobal} H^1(k(\tau),\mathbb{T}/(p^m,\gamma_{n}-1)\mathbb{T}) \cong H^1(k_n(\tau),T/p^mT) \hbox{ and,}\ee
\be\label{eqn:shapirolocal} H^1(k(\tau)_\ell,\mathbb{T}/(p^m,\gamma_{n}-1)\mathbb{T}) \cong H^1(k_n(\tau)_\ell,T/p^mT).\ee
See~\cite[Proposition II.1.1]{colmez-reciprocity} for (\ref{eqn:shapiroglobal}) and~\cite[Appendix B.5]{r00}  for (\ref{eqn:shapirolocal}). Thanks to these identifications, we may talk about the propagation of a local condition $H^1_{\FF}(k_\ell,\mathbb{T})\subset H^1(k_\ell,\mathbb{T})$ at $\ell$ to a local condition
$$H^1_{\FF}((k_n)_\ell,T/p^mT)\subset H^1((k_n)_\ell,T/p^mT)\cong H^1(k_\ell,\mathbb{T}/(p^m,\gamma_n-1)\mathbb{T}).$$ Namely, we define $H^1_{\FF}((k_n)_\ell,T/p^mT)$ as the isomorphic copy of the module $H^1_{\FF}(k_\ell,\mathbb{T}/(p^m,\gamma_n-1)\mathbb{T})$ under the isomorphism (\ref{eqn:shapirolocal}) of Shapiro's lemma.
\end{rem}

 \begin{thm}
\label{thm:kolsys2}
\begin{enumerate} \item[(i)]$\pmb{\kappa}^{\textup{St}} \in \overline{\KS}(T,\FF_{\al},\PP).$
\item[(ii)] $\pmb{\kappa}^{\textup{St}_\infty} \in \overline{\KS}(\mathbb{T},\FF_{\all},\PP).$
\end{enumerate}
\end{thm}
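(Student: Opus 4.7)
My plan is to reduce both statements to a single local question at $p$, which then follows from the $\pmb{\al}$-restrictedness of the Euler system $\mathbf{c}^{\textup{St}}$. Theorem~\ref{thm:ESKSmain} already places $\pmb{\kappa}^{\textup{St}} \in \overline{\KS}(T,\FFc,\PP)$ and $\pmb{\kappa}^{\textup{St}_\infty} \in \overline{\KS}(\mathbb{T},\FF_\LL,\PP)$. Comparing Selmer structures: $\Sigma(\FF_\al)=\Sigma(\FFc)$ and $H^1_{\FF_\al}(k_\lambda,T)=H^1_{\FFc}(k_\lambda,T)$ for every $\lambda\neq p$ in $\Sigma(\FFc)$ (Definitions~\ref{def:line} and~\ref{def:line over k_infty}), and likewise for $\FF_\all$ versus $\FF_\LL$ thanks to Remark~\ref{rem:manyselmerstructuresagree}. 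So the only new axiom to verify is that, for each relevant $\tau$ and each level $(m)$ or $(m,n)$, the localization at $p$ of the derived class lies in the propagation of $\al$ (resp., $\all$) to the appropriate quotient of $T$ (resp.,~$\mathbb{T}$).

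The next step would be to unwind Kolyvagin's derivative construction (see~\cite[\S IV]{r00}, or its recap in the proof of Theorem~\ref{thm:ESKSmain}) to see that the class $\kappa^{\textup{St}}_{\tau,m}$ is obtained from the Euler system class $c^{\textup{St}}_{k(\tau)}$ by applying the group-ring operator $D_\tau=\prod_{\qq\mid\tau}D_\qq \in \ZZ_p[\Delta_\tau]$ and then taking a $\Delta_\tau$-fixed representative modulo~$p^m$ (and analogously at the $\LL$-adic level, after the Shapiro-lemma identifications of Remark~\ref{shapiro}). Since localization at $p$ is Galois-equivariant, the identity
\[
\textup{loc}_p\bigl(\kappa^{\textup{St}}_{\tau,m}\bigr) \equiv D_\tau \cdot \textup{loc}_p\bigl(c^{\textup{St}}_{k(\tau)}\bigr) \pmod{p^m}
\]
holds in the appropriate sense, and similarly for $\kappa^{\textup{St}_\infty}_\tau(m,n)$. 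By Example~\ref{ex:Lrestricted}(1), the Stickelberger Euler system is $\pmb{\al}$-restricted, so $\textup{loc}_p(c^{\textup{St}}_{k(\tau)}) \in \al^\tau_0$ and $\textup{loc}_p(c^{\textup{St}}_{k_n(\tau)}) \in \al^\tau_n$. These lines are chosen as direct summands of the ambient $\ZZ_p[\Gamma_n\times\Delta_\tau]$-module in Definition~\ref{def:thick line}; in particular they are stable under $D_\tau$, and compatible with the projections onto $\al$ and $\all$. Descending to the base field then lands the localization at~$p$ in the propagation of $\al$ (resp.,~$\all$), as required.

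To finish, I would have to confirm that the Mazur--Rubin adjustment taking a weak Kolyvagin system to an honest Kolyvagin system preserves the local-at-$p$ component. Inspecting~\cite[Appendix A]{mr02} one sees that the modification is effected purely through local correction classes supported at the primes $\qq\mid\tau$ (designed to kill the finite projection at those primes); the component at~$p$ is untouched. Thus the calculation above survives the adjustment, and passing to the limits $\varprojlim_s \varinjlim_j$ (resp., $\varprojlim_{s,m}\varinjlim_j$) gives the two claimed memberships.

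The main obstacle I anticipate is bookkeeping rather than conceptual: carefully tracking the identifications between $H^1(k(\tau)_p,T/p^mT)$, the image of $\al^\tau_0$ in it, and the corresponding quotient of $\al$ under the compatible projections of Corollary~\ref{cor:thick free} and Definition~\ref{def:thick line} (and the parallel $\LL$-adic tower via Remark~\ref{shapiro}). Once those identifications are set up, the key leverage comes from the fact that $\pmb{\al}$ was fixed at the very top of the tower as a $\ZZ_p[[\Gamma\times\pmb{\Delta}]]$-direct summand, so stability under all relevant group-ring actions, Kolyvagin derivative operators, and projection maps is built in by design.
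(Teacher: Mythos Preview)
Your proposal is correct and follows essentially the same approach as the paper: the paper's proof reduces everything to verifying that $\textup{loc}_p(\kappa^{\textup{St}}_{\tau,m})$ lands in $\al/p^m\al$ (and the analogous $\LL$-adic statement), citing \cite[Theorem 2.19]{kbbstark} and \cite[Theorem 3.23]{kbbiwasawa} for the details and noting that the key input is precisely the $\pmb{\al}$-restrictedness of $\mathbf{c}^{\textup{St}}$. Your outline in fact unpacks more of the mechanism (the $\Delta_\tau$-stability of $\al^\tau_m$ under $D_\tau$, and that the Mazur--Rubin correction in \cite[Appendix A]{mr02} only touches primes dividing $\tau$) than the paper makes explicit, but the strategy is identical.
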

\begin{proof}
Identical to the proofs of~\cite[Theorem 2.19]{kbbstark} and~\cite[Theorem 3.23]{kbbiwasawa}. We remark that the only essential point beyond~\cite{r00,mr02} is to verify that
\be\label{alpha}
\textup{loc}_p\left( \kappa^{\textup{St}}_{\tau,m}\right) \in H^1_{\FF_{\al}}(k_p,T/p^mT)\cong\al/p^m\al
\ee for each $\tau\in \NN_m$ and $m\in \ZZ^+$; and that
\begin{align}
\label{beta}\textup{loc}_p\left( \kappa^{\textup{St}_\infty}_{\tau}(m,n)\right) &\in H^1_{\FF_{\all}}(k_p, \mathbb{T}/(p^m,\gamma_n-1)\mathbb{T})\\
&\cong \all/(p^m,\gamma_n-1)\all :=\al_n/p^m\al_n
\end{align}
for every $\tau \in \NN_{m+n}$ and every $m,n \in \ZZ^+$. As in~\cite{kbbstark,kbbiwasawa}, the key point in proving the assertions (\ref{alpha}) and (\ref{beta}) is the fact that $\textbf{c}^{\textup{St}}$ is an $\pmb{\al}$-restricted Euler system.
\end{proof}

We give the main applications of our construction in Section~\ref{sec:applications}. This will be twofold: The first application is somewhat standard; we will bound the dual Selmer groups. As the second application, we will relate the Stickelberger elements, making use of the first application, to the conjectural Rubin-Stark elements. Among other things, this will enable us to control the local behavior of  Rubin-Stark elements.

\begin{rem}
As remarked earlier, one only needs a  \emph{weak Kolyvagin system}  in order to deduce the main applications of the $\pmb{\al}$-restricted Euler system $\textbf{c}^{\textup{St}}$. See~\cite[Definition 3.1.8]{mr02} for a definition of a weak Kolyvagin system. We remark that Kolyvagin's descent~\cite[\S4]{r00} applied on an Euler system gives rise to a weak Kolyvagin system. A weak Kolyvagin system can be used following the formalism of~\cite[\S5 and \S7]{r00} with slight alterations, to obtain the same results which we present below.
\end{rem}

 \section{Applications}
 \label{sec:applications}
 Before we state our main applications of the $\pmb{\al}$-restricted Euler system $\mathbf{c}^{\textup{St}}$, we recall our running hypotheses. We fix a totally odd character $\chi$ of $G_k:=\Gal(\overline{k}/k)$ which is \emph{not} the Teichm\"uller character $\omega$  (giving the action of $G_k$ on the $p$-th roots of unity $\pmb{\mu}_{p}$).  Throughout Section~\ref{sec:applications}, we assume that (A1) holds. Suppose also that Assumption~\ref{assume:brumer} is true.
 \subsection{Main theorem over $k$}
We first prove a bound on the size of the dual Selmer group $H^1_{\FF_{\al}^*}(k,T^*)$. We use this bound to obtain bounds on the classical (dual) Selmer groups, via the comparison Theorem established in~\S\ref{subsubsec:compareselmer1}.

\begin{thm}
\label{thm:mainmodifiedk-1}Under our running hypotheses,
\begin{enumerate}
\item[(i)] $\textup{length}_{\ZZ_p} (H^1_{\FF^*_{\al}}(k,T^*)) \leq \textup{length}_{\ZZ_p} (H^1_{\FF_{\al}}(k,T)/\ZZ_p\cdot\kappa_1^{\textup{St}}),$
\item[(ii)] $\textup{length}_{\ZZ_p} (H^1_{\FF^*_{\textup{cl}}}(k,T^*)) \leq \textup{length}_{\ZZ_p} (\al/\ZZ_p\cdot c_k^{\textup{St}})$.
\end{enumerate}
\end{thm}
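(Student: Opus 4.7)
The plan is to prove this theorem as an essentially formal consequence of the machinery assembled in the earlier sections, once we feed in the Kolyvagin system $\pmb{\kappa}^{\textup{St}}$ attached to the Stickelberger Euler system. Part (i) is the immediate output of the Kolyvagin system bound, and part (ii) will be deduced from (i) by combining it with the Poitou--Tate comparison exact sequence of Corollary~\ref{cor:compare over k with class}.

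For (i), I would start by recalling that, under the running hypotheses (A1) and Assumption~\ref{assume:brumer}, we have constructed in \S\ref{sec:ES} the Stickelberger Euler system $\mathbf{c}^{\textup{St}}\in \ES(T)$, and that Theorem~\ref{thm:kolsys2}(i) promotes the associated Kolyvagin system $\pmb{\kappa}^{\textup{St}}$ (obtained via the Euler systems to Kolyvagin systems map of Theorem~\ref{thm:ESKSmain}) to an element of the refined module $\overline{\KS}(T,\FF_{\al},\PP)$ rather than merely of $\overline{\KS}(T,\FFc,\PP)$. Since the hypotheses (\hone)--(\hfour) hold for $T=\ZZ_p(\chi)$ (Remark~\ref{hypo holds}) and the core Selmer rank $\XX(T,\FF_{\al})=1$ (Proposition~\ref{modifiedcorerank}), the bound in Theorem~\ref{thm:mainksapplicationk}(i) applies verbatim to $\pmb{\kappa}^{\textup{St}}$, yielding (i).

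For (ii), I would invoke the identification $\kappa_{1}^{\textup{St}}=c_{k}^{\textup{St}}$ provided by Theorem~\ref{thm:ESKSmain}(1); under the injection $\textup{loc}_{p}\colon H^{1}_{\FF_{\al}}(k,T)\hookrightarrow \al$ (which is injective by Proposition~\ref{prop:compare selmer}, as $H^{1}_{\FF_{\cl}}(k,T)=0$ by Proposition~\ref{prop:classical selmer explicit}), we may simultaneously view this class as an element $c:=c_{k}^{\textup{St}}\in \al$. If $c=0$, then $\al/\ZZ_{p}\cdot c\cong \al\cong \ZZ_{p}$ has infinite $\ZZ_{p}$-length and (ii) is trivial, so we may assume $c\neq 0$. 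In that case, all four terms in the exact sequence of Corollary~\ref{cor:compare over k with class} are finite (note in particular that $H^{1}_{\FF_{\cl}^{*}}(k,T^{*})\cong A_{L}^{\chi}$ is finite by Proposition~\ref{prop:classical selmer explicit}), so additivity of length on that exact sequence gives
\begin{equation*}
\textup{length}_{\ZZ_{p}}H^{1}_{\FF_{\cl}^{*}}(k,T^{*})
=\textup{length}_{\ZZ_{p}}\!\left(\al/\ZZ_{p}c\right)
-\textup{length}_{\ZZ_{p}}\!\left(H^{1}_{\FF_{\al}}(k,T)/\ZZ_{p}c\right)
+\textup{length}_{\ZZ_{p}}H^{1}_{\FF_{\al}^{*}}(k,T^{*}).
\end{equation*}
Substituting the bound from part (i) into the last two terms on the right hand side, they cancel to a non-positive contribution, and we conclude $\textup{length}_{\ZZ_{p}}H^{1}_{\FF_{\cl}^{*}}(k,T^{*})\leq \textup{length}_{\ZZ_{p}}(\al/\ZZ_{p}\cdot c_{k}^{\textup{St}})$, which is exactly (ii).

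The only non-routine step in this plan is the verification that $\pmb{\kappa}^{\textup{St}}$ really lands in the $\pmb{\al}$-restricted module $\overline{\KS}(T,\FF_{\al},\PP)$, i.e.\ the refined local condition at $p$ is preserved by Kolyvagin's derivative construction; but this is precisely the content of Theorem~\ref{thm:kolsys2}(i), whose proof leverages the fact that $\mathbf{c}^{\textup{St}}$ is $\pmb{\al}$-restricted by construction (Example~\ref{ex:Lrestricted}(1)). Granting that, the argument above is a purely formal length-chase that organizes the Kolyvagin bound of Mazur--Rubin against the global duality exact sequence to propagate the estimate from the refined to the classical dual Selmer group.
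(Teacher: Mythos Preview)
Your proposal is correct and follows essentially the same approach as the paper: part (i) is exactly the Kolyvagin system bound of Theorem~\ref{thm:mainksapplicationk} applied to $\pmb{\kappa}^{\textup{St}}\in\overline{\KS}(T,\FF_{\al},\PP)$, and part (ii) is deduced from (i) via the exact sequence of Corollary~\ref{cor:compare over k with class} with $c=c_k^{\textup{St}}=\kappa_1^{\textup{St}}$. The paper's proof is a two-line sketch of precisely this argument; you have simply made the length-additivity step and the trivial case $c=0$ explicit.
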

\begin{proof}
(i) is Theorem~\ref{thm:mainksapplicationk}. (ii) follows from (i) and Corollary~\ref{cor:compare over k with class} applied with $c=c_k^{\textup{St}}=\kappa_1^{\textup{St}}$.

\end{proof}
Let $\theta_L^\chi \in \ZZ_p[\Delta]^{\chi}$ be as in \S\ref{sec:ES}. The evaluation map
$$\chi: \ZZ_p[\Delta]^{\chi} \lra \ZZ_p$$
 induces an isomorphism and we write $\chi(\theta_L)$ for the image of $\theta_L^\chi$ under this map. Recall the definition of $\varphi$ and $\lambda$, which we used in \S\ref{sec:ES} and \S\ref{sec:ESKSmap} to define $\textbf{c}^{\textup{St}}_k$. Recall also that $\lambda=\varphi$ by definition.
\begin{thm}
\label{thm:maink}
Under the assumptions above,
 $$|A_L^\chi|\leq |\ZZ_p/\chi(\theta_L)\ZZ_p|.$$
\end{thm}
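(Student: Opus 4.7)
The plan is to combine the bound from Theorem~\ref{thm:mainmodifiedk-1}(ii) with a direct computation of $|\al/\ZZ_p\cdot c_k^{\textup{St}}|$. By Proposition~\ref{prop:classical selmer explicit}, we have $H^1_{\FF_{\cl}^*}(k,T^*)\cong A_L^\chi$, so Theorem~\ref{thm:mainmodifiedk-1}(ii) already gives
$$|A_L^\chi|\;\leq\; |\al/\ZZ_p\cdot c_k^{\textup{St}}|.$$
The theorem therefore reduces to the identification $\al/\ZZ_p\cdot c_k^{\textup{St}} \cong \ZZ_p/\chi(\theta_L)\ZZ_p$, which amounts to computing the image of $c_k^{\textup{St}}$ in the free rank-one $\ZZ_p$-module $\al$ under localization at $p$.

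For this, I would unwind the construction of $c_k^{\textup{St}}$ from \S\ref{sec:ES} and \S\ref{subsec:choosehoms}. At the base level $\tau=1$, $n=0$ there is no ``wrong Euler factor'' to correct in the proof of Theorem~\ref{thm:mainstickes}, so $c_k^{\textup{St}}=\tilde c_k$ is the composition~(\ref{eqn:star})
$$c_k^{\textup{St}}:\,\left(\mathbb{A}_L^\times/L^\times B_L\right)^\chi \stackrel{\theta_L^\chi}{\lra} U_L^\chi/(U_L^\chi)_{\textup{tors}} \stackrel{\varphi}{\lra} \ZZ_p,$$
where I have used that $\lambda=\varphi$ by construction in \S\ref{subsec:choosehoms}. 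Under the identification $H^1(k_p,T)\cong \Hom(U_L^\chi,\ZZ_p)$ of Remark~\ref{rem:semi local identification}, and using the exact sequence recalled before~(\ref{eqn:themaptolocalunits}), the localization map $\textup{loc}_p$ corresponds (up to the finite contribution of $(\overline{\mathcal{O}_L^\times})^\chi$, which is killed since the target $\ZZ_p$ is torsion-free) to restricting a homomorphism defined on $(\mathbb{A}_L^\times/L^\times B_L)^\chi$ to the subgroup coming from $U_L^\chi$.

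Since $\theta_L^\chi$ acts on any element of the $\chi$-component as multiplication by the scalar $\chi(\theta_L)\in\ZZ_p$, a direct evaluation then gives
$$\textup{loc}_p(c_k^{\textup{St}})(u) \;=\; \varphi(\theta_L^\chi\cdot u) \;=\; \chi(\theta_L)\cdot\varphi(u) \qquad\text{for every } u\in U_L^\chi,$$
so that $\textup{loc}_p(c_k^{\textup{St}}) = \chi(\theta_L)\cdot\varphi \in \al$. (That this class really lies in $\al$ is already guaranteed by Theorem~\ref{thm:kolsys2}(i).) Since $\al=\ZZ_p\cdot\varphi$ is free of rank one, this yields $\al/\ZZ_p\cdot c_k^{\textup{St}} \cong \ZZ_p/\chi(\theta_L)\ZZ_p$, which combined with the inequality above completes the proof.

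The main obstacle in carrying out this plan is verifying the compatibility of the various identifications: one must check carefully that the isomorphism of Remark~\ref{rem:semi local identification} and the global identification~(\ref{eqn:explicith1forzp2}) are compatible with the multiplication-by-$\theta_L^\chi$ action and with the natural inclusion $U_L^\chi\hookrightarrow (\mathbb{A}_L^\times/L^\times B_L)^\chi$ induced by class field theory. Once this compatibility is in place, the evaluation above and the scalar action of $\chi(\theta_L)$ on the $\chi$-isotypic part reduce the theorem to Theorem~\ref{thm:mainmodifiedk-1}(ii).
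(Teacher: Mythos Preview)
Your proposal is correct and follows essentially the same route as the paper's own proof: identify $H^1_{\FF_{\cl}^*}(k,T^*)\cong A_L^\chi$ via Proposition~\ref{prop:classical selmer explicit}, invoke Theorem~\ref{thm:mainmodifiedk-1}(ii), and compute $\textup{loc}_p(c_k^{\textup{St}})=\chi(\theta_L)\varphi$ so that $\al/\ZZ_p\cdot c_k^{\textup{St}}\cong \ZZ_p/\chi(\theta_L)\ZZ_p$. The paper simply asserts ``by construction, $c_k^{\textup{St}}=\chi(\theta_L)\lambda=\chi(\theta_L)\varphi$'' where you unwind the composition~(\ref{eqn:star}) and the identification of Remark~\ref{rem:semi local identification} more explicitly; the compatibility checks you flag as the main obstacle are exactly what the paper sweeps into that phrase.
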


\begin{proof}
By Proposition~\ref{prop:classical selmer explicit}, $H^1_{\FF^*_{\textup{cl}}}(k,T^*)\cong A_L^\chi$, and by construction, $c_k^{\textup{St}}=\chi(\theta_L) \lambda=\chi(\theta_L)\varphi$. Since $\varphi$  is a $\ZZ_p$-generator of $\al$ (by definition), it follows that $\al/\ZZ_p\cdot c_k^{\textup{St}}=\ZZ_p/\chi(\theta_L)\ZZ_p$. The proof now follows from Theorem~\ref{thm:mainmodifiedk-1}(ii).
\end{proof}

The inequality of Theorem~\ref{thm:maink} may be strengthened to an equality:
\begin{thm}
\label{thm:mainequalityk} As in Theorem~\ref{thm:maink} above, assume that (A1) holds and Assumption 1.1 is true. Suppose in addition
\begin{itemize}
\item[(i)] either that $\pmb{\mu}_p \not\subset L$, or
\item[(ii)] the statement of Theorem~\ref{thm:maink} is true for $\chi=\omega$.
\end{itemize}
Then
$$|A_L^\chi|= |\ZZ_p/\chi(\theta_L)\ZZ_p|.$$
\end{thm}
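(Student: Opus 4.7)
The plan is to upgrade the individual bound of Theorem~\ref{thm:maink} to an equality by aggregating it across the totally odd characters of $\Delta$ and matching the aggregate with the analytic class number formula for CM abelian extensions of totally real fields. Since $\chi$ has prime-to-$p$ order and takes values in $\ZZ_p^\times$, its image lies in $\mu_{p-1}$, so $\Delta=\Gal(L/k)$ is cyclic and every character of $\Delta$ is a power of $\chi$. Because $p\nmid|\Delta|$, the idempotent decomposition $\ZZ_p[\Delta]=\bigoplus_{\chi'}\ZZ_p[\Delta]^{\chi'}$ is perfect, and in particular $A_L^{-}=\bigoplus_{\chi'\textup{ odd}}A_L^{\chi'}$.

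Applying (the evident variant of) Theorem~\ref{thm:maink} to each totally odd character $\chi'$ of $\Delta$ and taking the product yields
$$|A_L^{-}|\;=\;\prod_{\chi'\textup{ totally odd}}|A_L^{\chi'}|\;\leq\;\prod_{\chi'\textup{ totally odd}}|\ZZ_p/\chi'(\theta_L)\ZZ_p|\;=\;\left|\ZZ_p[\Delta]^{-}/\theta_L^{-}\ZZ_p[\Delta]^{-}\right|,$$
where $\theta_L^{-}=\tfrac{1-c}{2}\theta_L$ is the minus Stickelberger element. Hypothesis (i) guarantees $\omega\notin\widehat{\Delta}$, so every totally odd character of $\Delta$ differs from $\omega$; hypothesis (ii) supplies the bound for $\omega$ directly in case it does appear among the characters of $\Delta$.

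The analytic class number formula for the CM abelian extension $L/k$ (Siegel's theorem, made $p$-integral via Deligne--Ribet) now provides the matching \emph{equality}
$$|A_L^{-}|\;=\;\left|\ZZ_p[\Delta]^{-}/\theta_L^{-}\ZZ_p[\Delta]^{-}\right|:$$
under $\pmb{\mu}_p\not\subset L$, both the number of roots of unity $w_L$ and the Hasse unit index $Q_L$ are prime to $p$ and drop out of the $p$-part of the formula; in the remaining case the $\omega$-component, controlled via hypothesis (ii), is accounted for separately. Squeezing between the two displays forces each factor $|A_L^{\chi'}|\leq|\ZZ_p/\chi'(\theta_L)\ZZ_p|$ to be an equality, in particular for the fixed $\chi$, which is the desired conclusion.

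The main obstacle in carrying out this plan is ensuring that the bound of Theorem~\ref{thm:maink} indeed applies uniformly to every totally odd character of $\Delta$ and not just to the fixed $\chi$: the hypotheses (A1) and Assumption~\ref{assume:brumer} are stated only for $\chi$, and (A1) in particular does not automatically propagate to the powers $\chi^a$ (a prime $\wp$ above $p$ with $\chi(\wp)$ of order $d>1$ will satisfy $\chi^a(\wp)=1$ whenever $d\mid a$). The dichotomy (i)/(ii) is calibrated precisely to isolate the Teichm\"uller character $\omega$, for which the semi-local cohomology at $p$ genuinely degenerates; handling the remaining powers $\chi^a$ requires a careful verification that the Kolyvagin system machinery of~\S\ref{subsec:KS1} goes through in each case.
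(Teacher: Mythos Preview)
Your plan---multiply the inequality of Theorem~\ref{thm:maink} over the totally odd characters of $\Delta$ and squeeze against the $p$-part of the relative class number formula for the CM extension $L/k$---is exactly the paper's; its proof is a single sentence invoking ``a standard argument involving the class number formula'' and citing \cite[\S5]{ru92} and \cite[\S3]{kbbstark}, with the role of the dichotomy (i)/(ii) explained just as you do.

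The obstacle you isolate in your final paragraph is genuine and is \emph{not} absorbed by (i)/(ii), which only singles out $\chi'=\omega$. If an odd power $\chi^a$ fails (A1)---which occurs whenever $a$ is an odd multiple of the order $d_\wp$ of some $\chi(\wp)$ with $d_\wp>1$ odd---then, since $\theta_L=\theta_{L,S}$ with $S\supset\{\wp\mid p\}$, one gets $\chi^a(\theta_L)=L_S(0,\chi^{-a})=0$ via the depleted Euler factor $(1-\chi^{-a}(\wp))$; the corresponding term $|\ZZ_p/\chi^a(\theta_L)\ZZ_p|$ is infinite and the product on the right can never match the finite $|A_L^{-}|$, so the squeezing collapses. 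The paper's proof does not confront this in situ either. The repair hidden in the cited references is to run the product against the \emph{primitive} values $L(0,\chi^{-a})$ (these agree with $\chi(\theta_L)$ up to a $p$-adic unit at the target $\chi$, by (A1)) and to supply the auxiliary inequalities $|A_L^{\chi^a}|\leq |\ZZ_p/L(0,\chi^{-a})\ZZ_p|$ via a suitably adapted Euler system with a smaller set $S$, or by direct citation. Your diagnosis is accurate; as written, neither your sketch nor the paper's own proof fills this gap on the page.
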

\begin{proof}
Unless $\pmb{\mu}_p\subset L$ (i.e., if we are in the case (i) above), the claimed equality follows from the inequality of Theorem~\ref{thm:maink} using a standard argument involving the class number formula; see~\cite[\S5]{ru92} and \cite[\S3]{kbbstark} for details. Note that we need the assumption $\pmb{\mu}_p\not\subset L$ for this portion since otherwise, we would need the inequality of Theorem~\ref{thm:maink} also for the Teichm\"uller character $\omega$, and this escapes the methods of the current paper.

When $\pmb{\mu}_p\subset L$, Theorem~\ref{thm:maink} used along with our assumption (ii) gives again the desired equality utilizing the class number formula.
\end{proof}
We note that Wiles' result~\cite[Theorem 3]{wiles90} (\emph{only} for the case $\chi=\omega$) is exactly the statement of Theorem~\ref{thm:maink} in the case $\chi=\omega$. Therefore, the condition (ii) that appears in the statement of Theorem~\ref{thm:mainequalityk} is true if we assume Wiles' result in this particular case.

We henceforth assume \cite[Theorem 3]{wiles90} \emph{({only} for the case $\chi=\omega$)} if $\pmb{\mu}_p\subset L$.
\begin{rem}
Note that  all  the Euler factors at primes $\wp | p$ are excluded in the definition of $\theta_K^\chi$, for any $K\in \kk$ (recall the collection $\kk$ from~\S\ref{sec:ES}), contrary to the standard definition of Stickelberger elements when $K/k$ is unramified at a certain prime above $p$. Theorem~\ref{thm:mainequalityk} is still equivalent to \cite[Theorem 3]{wiles90}, since our assumption (A1) assures that our Stickelberger elements agree with that of~\cite{wiles90, kurihara} up to units.
\end{rem}
\subsection{Main theorem over $k_\infty$}
\label{subsec:mainthmkinfty}
Along with the assumptions above, suppose also that (A2) holds. Write $\textup{char}(M)$ for the characteristic ideal of a torsion $\LL$-module $M$.

We proceed as in the previous section: First, we prove a bound for the characteristic ideal of the dual Selmer group $H^1_{\FF_{\all}^*}(k,\mathbb{T}^*)^{\vee}$. We then use this bound, together with Proposition~\ref{prop:compare selmer over k_infty}, to obtain a bound on the characteristic ideal of (the Pontryagin dual of) the classical (dual) Selmer groups.

\begin{thm}
\label{thm:mainmodifiedkinfty-2} Under the running assumptions,
$$\textup{char}\left(H^1_{\FF_{\all}^*}(k,\mathbb{T}^*)^{\vee}\right)\mid \textup{char}\left(H^1_{\FF_{\all}}(k,\mathbb{T})/\Lambda\cdot\kappa_1^{\textup{St}_\infty}\right).$$
\end{thm}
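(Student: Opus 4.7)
The theorem is essentially a direct application of the Kolyvagin system machinery already assembled in \S\ref{subsec:KS1} and \S\ref{subsec:kolsys2}; the plan is simply to feed the Stickelberger Kolyvagin system into Theorem~\ref{thm:mainapplicationKS}. More precisely, my first step would be to recall that by Theorem~\ref{thm:kolsys2}(ii), the class
$\pmb{\kappa}^{\textup{St}_\infty}$ obtained from the Euler system $\mathbf{c}^{\textup{St}}$ via Kolyvagin's descent lies in $\overline{\KS}(\mathbb{T},\FF_{\all},\PP)$, and not merely in the coarser module $\overline{\KS}(\mathbb{T},\FF_{\LL},\PP)$. This is the crucial point, since the general Euler system to Kolyvagin system map of Theorem~\ref{thm:ESKSmain} a priori only lands in the canonical Selmer structure, and it is the $\pmb{\al}$-restricted nature of $\mathbf{c}^{\textup{St}}$ (together with the analysis of semi-local cohomology groups in \S\ref{subsec:choosehoms}) that allows us to promote $\pmb{\kappa}^{\textup{St}_\infty}$ to a Kolyvagin system for the refined Selmer structure $\FF_{\all}$.

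Next, I would verify that the hypotheses of Theorem~\ref{thm:mainapplicationKS} hold in our setting. The hypotheses (A1) and (A2) are part of the running assumptions, and the hypotheses (\hone)--(\hfour) have been checked in Remark~\ref{hypo holds}, while (\htamF) and (\hsezF) are guaranteed by (A1) (as noted just below the statement of Theorem~\ref{main-stark}). With all inputs in place, Theorem~\ref{thm:mainapplicationKS} applied to the specific Kolyvagin system $\pmb{\kappa}^{\textup{St}_\infty}$ yields
\[
\textup{char}\!\left(H^1_{\FF_{\all}^*}(k,\mathbb{T}^*)^{\vee}\right)\,\Big|\, \textup{char}\!\left(H^1_{\FF_{\all}}(k,\mathbb{T})/\LL\cdot \kappa_1^{\textup{St}_\infty}\right),
\]
which is exactly the assertion of the theorem.

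The main obstacle, in retrospect, was not this final step but the preparatory work: verifying that $\pmb{\kappa}^{\textup{St}_\infty}$ actually lands in the $\pmb{\mathbb{L}}$-restricted module, which required both the construction of a compatible system of homomorphisms $\{\lambda_m^\eta\}$ (Proposition~\ref{prop:constructhoms}), and the freeness of the semi-local cohomology groups (Proposition~\ref{prop:semilocalstructure}) ensuring that the localization at $p$ of every derivative class lies in the prescribed rank-one direct summand. Once those are in hand, the passage from the Kolyvagin system to the characteristic ideal divisibility is a formal invocation of \cite[Theorem 5.3.10(iii)]{mr02}, so no further work is required.
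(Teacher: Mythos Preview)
Your proposal is correct and follows exactly the paper's approach: the paper's proof is the single line ``This is Theorem~\ref{thm:mainapplicationKS},'' which implicitly relies on Theorem~\ref{thm:kolsys2}(ii) to know that $\pmb{\kappa}^{\textup{St}_\infty}$ lies in $\overline{\KS}(\mathbb{T},\FF_{\all},\PP)$. You have simply unpacked that one line into its constituent ingredients.
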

\begin{proof}
This is Theorem~\ref{thm:mainapplicationKS}.
\end{proof}
Set $c_{k_\infty}^{\textup{St}}:=\{c_{k_n}^{\textup{St}}\}_{_n} \in \varprojlim_n H^1(k_n,T)=H^1(k,\mathbb{T})$.
\begin{cor}
\label{cor:mainkinfty-2}
$\textup{char}\left((\varinjlim_n A_{L_n}^\chi)^{\vee}\right)\mid \textup{char}\left(\all/\Lambda\cdot c_{k_\infty}^{\textup{St}_\infty}\right).$
\end{cor}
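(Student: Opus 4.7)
The plan is to combine the exact sequence of Proposition~\ref{prop:compare selmer over k_infty}(ii) with the Kolyvagin system bound provided by Theorem~\ref{thm:mainmodifiedkinfty-2}, using the standard multiplicativity of characteristic ideals along short exact sequences of torsion $\LL$-modules.

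The first step is to record that the ``Euler system class'' $c_{k_\infty}^{\textup{St}}=\{c_{k_n}^{\textup{St}}\}$ actually lies in the modified Selmer group $H^1_{\FF_{\all}}(k,\mathbb{T})$. This is exactly the content of Theorem~\ref{thm:kolsys2}(ii), together with the identification $\kappa_1^{\textup{St}_\infty}=c_{k_\infty}^{\textup{St}}$ noted just before that theorem. With this, we may feed $c:=c_{k_\infty}^{\textup{St}}$ into Proposition~\ref{prop:compare selmer over k_infty}(ii) and obtain the four-term exact sequence
\begin{equation*}
0 \lra \frac{H^1_{\FF_{\all}}(k,\mathbb{T})}{\LL\cdot c}\stackrel{\textup{loc}_p}{\lra}\frac{\all}{\LL\cdot c}\lra (\varinjlim_n A_{L_n}^{\chi})^{\vee}\lra (H^1_{\FF_{\all}^*}(k,\mathbb{T}^*))^{\vee}\lra 0.
\end{equation*}

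Second, I would split this sequence via the image $X$ of $\all/\LL c$ in $(\varinjlim_n A_{L_n}^\chi)^\vee$ into two short exact sequences of $\LL$-modules. If $\all/\LL\cdot c$ fails to be $\LL$-torsion (which happens only when $c=0$, since $\all$ is free of rank one over $\LL$), the claimed divisibility is vacuous because the right-hand characteristic ideal is zero; so we may assume all four terms are $\LL$-torsion. Applying the multiplicativity of characteristic ideals in each of the two short exact sequences yields the identity
\begin{equation*}
\textup{char}\!\left(H^1_{\FF_{\all}}(k,\mathbb{T})/\LL c\right)\cdot\textup{char}\!\left((\varinjlim_n A_{L_n}^\chi)^\vee\right) \;=\; \textup{char}\!\left(\all/\LL c\right)\cdot\textup{char}\!\left((H^1_{\FF_{\all}^*}(k,\mathbb{T}^*))^\vee\right).
\end{equation*}

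Third, invoke Theorem~\ref{thm:mainmodifiedkinfty-2}, which gives
$$
\textup{char}\!\left((H^1_{\FF_{\all}^*}(k,\mathbb{T}^*))^\vee\right)\;\Bigm|\;\textup{char}\!\left(H^1_{\FF_{\all}}(k,\mathbb{T})/\LL c\right).
$$
Write the quotient of these two ideals as an integral ideal $I\subset \LL$; substituting into the displayed equation and cancelling $\textup{char}((H^1_{\FF_{\all}^*}(k,\mathbb{T}^*))^\vee)$ on both sides (noting that $\LL$ is a Krull domain, so cancellation of divisorial ideals is legitimate) produces
$$
I\cdot\textup{char}\!\left((\varinjlim_n A_{L_n}^\chi)^\vee\right)\;=\;\textup{char}\!\left(\all/\LL c_{k_\infty}^{\textup{St}}\right),
$$
which is precisely the stated divisibility. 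I do not expect any serious obstacle; the only mildly delicate point is confirming that the truncation of the exact sequence (which the author obtains via Poitou--Tate) is genuinely four-term and pseudo-null terms do not spoil the characteristic-ideal bookkeeping, but this is already built into the conclusion of Proposition~\ref{prop:compare selmer over k_infty}(ii).
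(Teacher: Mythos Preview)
Your proof is correct and follows precisely the route the paper takes: apply Proposition~\ref{prop:compare selmer over k_infty}(ii) with $c=c_{k_\infty}^{\textup{St}}=\kappa_1^{\textup{St}_\infty}$ and combine the resulting four-term exact sequence with the divisibility of Theorem~\ref{thm:mainmodifiedkinfty-2} via multiplicativity of characteristic ideals. The paper compresses all of this into a one-line reference to those two results; your write-up simply unpacks the characteristic-ideal bookkeeping explicitly, which is fine.
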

\begin{proof}
This follows from Theorem~\ref{thm:mainmodifiedkinfty-2} and Proposition~\ref{prop:compare selmer over k_infty}(ii) applied with $c=c_{k_\infty}^{\textup{St}_\infty}$; together with the fact that $\kappa_1^{\textup{St}_\infty}=c_{k_\infty}^{\textup{St}_\infty}$.
\end{proof}

Recall the element $\theta_{L_n}^\chi \in \ZZ_p[\Delta\times\Gamma_n]^{\chi}=\ZZ_p[\Delta]^{\chi}[\Gamma_n]$ from~\S\ref{sec:ES}. We denote the image of $\theta_{L_n}^\chi$ under the map $$\chi_{_{\LL}}:\ZZ_p[\Delta]^{\chi}[\Gamma_n] \lra \ZZ_p[\Gamma_n],$$ (which extends $\chi:\ZZ_p[\Delta]^\chi\ra \ZZ_p$ from the previous section  to $\Gamma_n$ by letting $\chi_{_{\LL}}(\gamma)=\gamma$ for $\gamma \in \Gamma_n$) by $\chi_{_{\LL}}(\theta_{L_n})$. Lemma~\ref{lem:stickdistribution} shows that $\{\chi_{_{\LL}}(\theta_{L_n})\}$ is a projective system with respect to natural surjections $\ZZ_p[\Gamma_{n^\prime}]\ra \ZZ_p[\Gamma_n]$, $n^\prime \geq n$. We define
$$\chi_{_{\LL}}(\Theta_{L_{\infty}}):=\{\chi_{_{\LL}}(\theta_{L_n})\} \in \varprojlim_n \ZZ_p[\Gamma_n]=\Lambda.$$
Finally, let $x \mapsto x^{\bullet}$ be the involution on $\LL$ induced from $\gamma \mapsto \gamma^{-1}$ for $\gamma \in \Gamma$.
\begin{thm}
\label{thm:mainkinfty}
Under the running hypotheses of this section,
$$\textup{char}\left((\varinjlim_n A_{L_n}^\chi)^{\vee}\right)\mid \chi_{_{\LL}}(\Theta_{L_{\infty}})^{\bullet}.$$
\end{thm}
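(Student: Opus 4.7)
By Corollary~\ref{cor:mainkinfty-2}, it suffices to show the divisibility
\[
\textup{char}\!\left(\all/\LL\cdot c_{k_\infty}^{\textup{St}_\infty}\right) \,\bigm|\, \chi_{_{\LL}}(\Theta_{L_{\infty}})^{\bullet}.
\]
Since by Definition~\ref{def:line over k_infty} the module $\all$ is free of rank one over $\LL$, with a fixed generator $\Phi$, the plan is to show that $\textup{loc}_p(c_{k_\infty}^{\textup{St}_\infty}) = \chi_{_{\LL}}(\Theta_{L_{\infty}})^{\bullet}\cdot \Phi$ as elements of $\all$; this then immediately identifies $\all/\LL\cdot c_{k_\infty}^{\textup{St}_\infty}$ with $\LL/\chi_{_{\LL}}(\Theta_{L_{\infty}})^{\bullet}\LL$ and yields the required divisibility (in fact, an equality).

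The first step is to do the computation one level at a time. By construction (see Theorem~\ref{thm:mainstickes}, equation~(\ref{eqn:star}), and Remark~\ref{rem:semi local identification}) the local class $\textup{loc}_p(c_{k_n}^{\textup{St}})$ is the element of $H^1((k_n)_p,T)\cong \Hom(U_{L_n}^\chi,\ZZ_p)$ given by the composition
\[
u\longmapsto \varphi_n\bigl(\theta_{L_n}^{\chi}\cdot u\bigr),\qquad u\in U_{L_n}^{\chi},
\]
where $\varphi_n$ is the image of $\Phi$ in $H^1((k_n)_p,T)$ coming from Definition~\ref{def:thick line}. The $\ZZ_p[\Delta\times\Gamma_n]$-action on $\Hom(U_{L_n}^\chi,\ZZ_p)$ is the dual/contragredient action $(\sigma\cdot f)(u)=f(\sigma^{-1}u)$, so for $\xi\in\ZZ_p[\Delta\times\Gamma_n]$ one has $f(\xi\cdot u)=(\xi^{\bullet}\cdot f)(u)$. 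Applied to $\xi=\theta_{L_n}^{\chi}$ and $f=\varphi_n$, this gives
\[
\textup{loc}_p(c_{k_n}^{\textup{St}})\;=\;(\theta_{L_n}^{\chi})^{\bullet}\cdot \varphi_n\;=\;\chi_{_{\LL}}(\theta_{L_n})^{\bullet}\cdot \varphi_n
\]
(the second equality simply records that the $\Delta$-part of $\theta_{L_n}^{\chi}$ acts through $\chi$ on the $\chi$-isotypic piece).

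The second step is to pass to the inverse limit. By Definition~\ref{def:thick line}, $\Phi$ projects to $\varphi_n$ for every $n$, and by Lemma~\ref{lem:stickdistribution} the system $\{\chi_{_{\LL}}(\theta_{L_n})\}$ is norm-coherent with limit $\chi_{_{\LL}}(\Theta_{L_\infty})\in\LL$. The involution $\bullet$ is compatible with the inverse system, so the level-$n$ formulas glue to
\[
\textup{loc}_p(c_{k_\infty}^{\textup{St}_\infty})\;=\;\varprojlim_n\textup{loc}_p(c_{k_n}^{\textup{St}})\;=\;\chi_{_{\LL}}(\Theta_{L_\infty})^{\bullet}\cdot \Phi
\]
inside $\all\subset H^1(k_p,\mathbb{T})$; this uses the $\pmb{\al}$-restrictedness of $\mathbf{c}^{\textup{St}}$ (Example~\ref{ex:Lrestricted}) to know a priori that the localization lands in $\all$. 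Freeness of $\all$ then gives $\all/\LL\cdot c_{k_\infty}^{\textup{St}_\infty}\cong \LL/(\chi_{_{\LL}}(\Theta_{L_\infty})^{\bullet})$, whose characteristic ideal is exactly $(\chi_{_{\LL}}(\Theta_{L_\infty})^{\bullet})$, and combining with Corollary~\ref{cor:mainkinfty-2} finishes the proof.

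The main potential obstacle is keeping the Galois-theoretic bookkeeping straight: one must verify carefully that under the identification in Remark~\ref{rem:semi local identification} the $\Lambda$-action on cohomology really is the dual one (producing the involution $\bullet$), and that the norm-coherence of the $\{\varphi_n\}$ built into Definition~\ref{def:thick line} matches, via Proposition~\ref{prop:constructhoms}, the norm-coherence of the $\lambda_n$ used to define $\mathbf{c}^{\textup{St}}$, so that the limit formula is really $\Phi$ (rather than some other, non-generator element of $\all$). Once these identifications are pinned down, the computation is essentially a manipulation inside a free rank-one $\LL$-module.
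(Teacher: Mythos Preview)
Your proposal is correct and follows essentially the same route as the paper: both reduce via Corollary~\ref{cor:mainkinfty-2} to computing $\textup{loc}_p(c_{k_\infty}^{\textup{St}})$ inside the free rank-one module $\all$, establish level by level that $c_{k_n}^{\textup{St}}=\chi_{_{\LL}}(\theta_{L_n})^{\bullet}\varphi_n$, and pass to the inverse limit to obtain $\chi_{_{\LL}}(\Theta_{L_\infty})^{\bullet}\Phi$. Your version simply makes explicit the contragredient-action bookkeeping behind the appearance of the involution $\bullet$ and the compatibility of the $\varphi_n$ under norms, which the paper leaves implicit.
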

\begin{proof}
By the construction of $c_{k_n}^{\textup{St}}$ and $\lambda_n$,
  $$c_{k_n}^{\textup{St}}=\lambda_n\circ \theta_{L_n}^{\chi}=\chi_{_{\LL}}(\theta_{L_{n}})^\bullet\lambda_n=\chi_{_{\LL}}(\theta_{L_{n}})^\bullet\varphi_n.$$
   It follows that
   \be
   \label{eqn:ESinftyexplicit}
   c_{k_\infty}^{\textup{St}}=\{\chi_{_{\LL}}(\theta_{L_{n}})^\bullet\varphi_n\}_n=\chi_{_{\LL}}(\Theta_{L_{\infty}})^{\bullet}\Phi,
   \ee
   with $\Phi=\{\varphi_n\}$ as in \S\ref{subsec:choosehoms}. Since $\Phi$ is a generator of $\all$ (by definition), Theorem follows from Corollary~\ref{cor:mainkinfty-2}.
\end{proof}

Once again, making use of a standard class number argument (and yet again the case $\pmb{\mu}_p\subset L$ requires more care as above) shows that this equality may be turned into an equality:%note that wiles in his main conj over totally real fields paper also treats, of course, the case \chi=\omega, as he applies this in his "on a conjecture of brumer" paper to deduce thm 3 for \omega.
\begin{thm}
\label{thm:mainequalitykinfty} Under the hypotheses of Theorem~\ref{thm:mainequalityk} and assuming (A2),
$$\textup{char}\left((\varinjlim_n A_{L_n}^\chi)^{\vee}\right)= \chi_{_{\LL}}(\Theta_{L_{\infty}})^\bullet.$$
\end{thm}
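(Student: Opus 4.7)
The plan is to upgrade the divisibility of Theorem~\ref{thm:mainkinfty} into an equality via a standard Iwasawa descent argument, paralleling the strategy for Theorem~\ref{thm:mainequalityk}. First, I would apply Theorem~\ref{thm:mainequalityk} with the base field $k$ replaced by each intermediate field $k_n$: this is legitimate since $k_n$ is still totally real, $\chi$ still has prime-to-$p$ order and is $\neq\omega$, and (A1) continues to hold at every prime of $k_n$ above $p$ because (A2) forces the residue-field extensions in $k_n/k$ at $p$ to be trivial. This yields finite-level equalities of the shape $|A_{L_n}^\chi|=|\mathbb{Z}_p/\chi(\theta_{L_n/k_n,S_n})\mathbb{Z}_p|$ for every $n\geq 0$. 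The distribution relation of Lemma~\ref{lem:stickdistribution}, combined with the fact that the Euler factors $1-\chi(\textup{Fr}_\wp)^{\pm 1}$ at primes $\wp\mid p$ are $p$-adic units under (A1), lets me rewrite these as
$$|A_{L_n}^\chi|=|\mathbb{Z}_p[\Gamma_n]/\chi_{_{\LL}}(\theta_{L_n})^\bullet\mathbb{Z}_p[\Gamma_n]|$$
up to a $p$-adic unit independent of $n$.

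Next, I would invoke the standard Iwasawa-theoretic control theorem, which is available in our setting thanks to (A2): the corestriction maps $(\varinjlim_m A_{L_m}^\chi)^{\Gamma_n}\to A_{L_n}^\chi$ have pseudo-null (in fact boundedly small) kernel and cokernel, the obstruction being controlled by the semi-local cohomology at $p$ studied in \S\ref{sub:locp}. Dually, for $X:=(\varinjlim_m A_{L_m}^\chi)^{\vee}$ this gives $|X/(\gamma_n-1)X|=|A_{L_n}^\chi|$ up to a bounded multiplicative constant. Combining with the standard comparison $|\LL/(\textup{char}(M),\gamma_n-1)|\sim|M/(\gamma_n-1)M|$ for any finitely generated torsion $\LL$-module $M$, I would conclude that the two characteristic generators $\textup{char}(X)$ and $\chi_{_{\LL}}(\Theta_{L_\infty})^\bullet$ have the same image in $\LL/(\gamma_n-1)$ for every $n\geq 0$, modulo error terms that remain bounded as $n$ varies.

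Finally, the divisibility of Theorem~\ref{thm:mainkinfty} combined with these matching specializations for all $n$ forces the ratio $\chi_{_{\LL}}(\Theta_{L_\infty})^\bullet/\textup{char}(X)\in\LL$ to be a unit: a non-unit element of $\LL$ necessarily has positive $\mu$- or $\lambda$-invariant and would therefore produce unbounded growth in $|\LL/(\cdot,\gamma_n-1)|$ as $n\to\infty$, contradicting the bounded matching just established. The main obstacle is, as in Theorem~\ref{thm:mainequalityk}, the case $\pmb{\mu}_p\subset L$; this is handled by importing Wiles' result for $\chi=\omega$, which is already built into the hypotheses of Theorem~\ref{thm:mainequalitykinfty} via those of Theorem~\ref{thm:mainequalityk}. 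A secondary technicality is tracking the bounded error terms coming from the control theorem together with the Euler factors at primes above $p$ produced by Lemma~\ref{lem:stickdistribution}; both contribute only units to the relevant characteristic ideals under (A1)--(A2) and hence do not affect the final equality.
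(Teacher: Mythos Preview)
Your descent strategy is different from what the paper has in mind, and it carries a real gap. The paper's ``standard class number argument'' (following \cite[\S5]{ru92} and \cite[\S3]{kbbstark}) does \emph{not} reprove Theorem~\ref{thm:mainequalityk} over each $k_n$. Instead it stays at the $\LL$-level: one has the divisibility of Theorem~\ref{thm:mainkinfty} simultaneously for \emph{every} totally odd character $\chi\neq\omega$ of $\Delta$, and the analytic class number formula (in its Iwasawa-theoretic incarnation) forces
\[
\prod_{\chi \textup{ odd}}\textup{char}\left((\varinjlim_n A_{L_n}^\chi)^{\vee}\right)=\prod_{\chi \textup{ odd}}\chi_{_{\LL}}(\Theta_{L_{\infty}})^\bullet
\]
as ideals of $\LL$. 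A product of divisibilities that multiplies to an equality is an equality termwise; the case $\pmb{\mu}_p\subset L$ again needs the $\chi=\omega$ input from Wiles. No change of base field is required.

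Your route, by contrast, needs Theorem~\ref{thm:mainequalityk} with base field $k_n$, hence needs the full Euler/Kolyvagin machinery of the paper rebuilt over $k_n$. The serious obstacle is Assumption~\ref{assume:brumer}: the paper assumes $\theta_{K/k}^{\chi}$ annihilates $A_K^{\chi}$ for $K\in\kk$, but to run the argument over $k_n$ you need $\theta_{K/k_n}^{\chi}$ to annihilate $A_K^{\chi}$. Since $\Gal(K/k_n)$ is a \emph{subgroup} (not a quotient) of $\Gal(K/k)$, there is no projection relating $\theta_{K/k_n}$ to $\theta_{K/k}$, and Brumer over $k$ does not formally imply Brumer over $k_n$. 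You would have to assume a strictly stronger hypothesis than the paper does.

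A smaller point: the passage from $|\ZZ_p/\chi(\theta_{L_n/k_n,S_n})|$ to $|\ZZ_p[\Gamma_n]/\chi_{_{\LL}}(\theta_{L_n})^\bullet|$ is Artin formalism (inductivity of $L$-functions under $\textup{Ind}_{k_n}^{k}$), not the distribution relation of Lemma~\ref{lem:stickdistribution}, which concerns varying the top field with the base fixed at $k$.
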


Let $\rho_{\textup{cyc}}:G_k\ra\ZZ_p^\times$ be the cyclotomic character (giving the action of $G_k$ on $\pmb{\mu}_{p^{\infty}}$), and set $\langle\rho_{\textup{cyc}}\rangle=\omega^{-1}\rho_{\textup{cyc}} :\Gamma \ra 1+p\ZZ_p$. We define a twisting map $\textup{Tw}_{\langle\rho_{\textup{cyc}}\rangle}:\LL\ra\LL$ by setting
$$\textup{Tw}_{\langle\rho_{\textup{cyc}}\rangle}(\gamma)= \langle\rho_{\textup{cyc}}\rangle(\gamma)\gamma \hbox{ for } \gamma \in \Gamma,$$
 and extending to $\LL$ by linearity and continuity.  Finally, let $\al_{\omega\chi^{-1}} \in \LL$ denote the \emph{Deligne-Ribet $p$-adic $L$-function} for the character $\omega\chi^{-1}$. We will loosely say here that $\al_{\omega\chi^{-1}}$ is characterized by the following interpolation property:
\be\label{eqn:interpolation}\langle\rho_{\textup{cyc}}\rangle^k\xi(\al_{\omega\chi^{-1}})=\prod_{\wp|p}(1-\omega^{-k}\xi\chi(\wp)\mathbf{N}\wp^{k-1})L(1-k,\omega^{-k}\xi\chi),\ee
for every $k\geq 1$ and every character $\xi$ of $\Gamma$ of finite order. Here, $L(s,\varrho)$ is the (abelian) Artin $L$-function attached to a  character $\varrho$ of $G_k$ which is of finite order.

\begin{lemma}
\label{lem:twistofpadicL}
 $\chi_{_{\LL}}(\Theta_{L_{\infty}})^\bullet=\textup{Tw}_{\langle\rho_{\textup{cyc}}\rangle}(\al_{\omega\chi^{-1}})$.
\end{lemma}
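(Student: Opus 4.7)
The plan is to verify the identity in $\LL=\ZZ_p[[\Gamma]]$ by checking agreement after applying every continuous character $\xi:\Gamma\to\overline{\QQ}_p^{\times}$ of finite order. Such characters are Zariski dense in $\textup{Spec}(\LL\otimes\QQ_p)$, so since both sides already lie in the integral ring $\LL$, it suffices to show that both sides interpolate the same partial Hecke $L$-values, namely $L_S(0,\chi^{-1}\xi)$.

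First I would unravel the LHS. Writing $\sigma=(\delta,\gamma)\in\Delta\times\Gamma_n$ for $n$ large enough that $\xi$ factors through $\Gamma_n$, the definitions of $\chi_{_{\LL}}$, the involution $\bullet$, and the Stickelberger element $\theta_{L_n}$ give
$$\xi\bigl(\chi_{_{\LL}}(\theta_{L_n})^{\bullet}\bigr)=\sum_{(\delta,\gamma)}\zeta_S(0,(\delta,\gamma))\chi(\delta)^{-1}\xi(\gamma)=L_S(0,\chi^{-1}\xi),$$
the last equality being the standard Artin formalism expressing partial zeta functions as summands of Hecke $L$-functions. Lemma~\ref{lem:stickdistribution} applied with $\tau=\tau^\prime=1$ shows that the family $\{\chi_{_{\LL}}(\theta_{L_n})\}$ is norm-compatible (the correction product over ramifying primes is empty), so the same formula also computes $\xi(\chi_{_{\LL}}(\Theta_{L_\infty})^{\bullet})=L_S(0,\chi^{-1}\xi)$.

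For the RHS, I would use the tautological identity $\xi\circ\textup{Tw}_{\langle\rho_{\textup{cyc}}\rangle}=\langle\rho_{\textup{cyc}}\rangle\xi$ on $\LL$ together with the interpolation property (\ref{eqn:interpolation}) specialized at $k=1$ to obtain
$$\xi\bigl(\textup{Tw}_{\langle\rho_{\textup{cyc}}\rangle}(\al_{\omega\chi^{-1}})\bigr)=\prod_{\wp\mid p}\bigl(1-(\chi^{-1}\xi)(\wp)\bigr)\,L(0,\chi^{-1}\xi),$$
where the character on the right is identified with $\chi^{-1}\xi$ after absorbing the Teichm\"uller twist built into the subscript of the Deligne--Ribet $p$-adic $L$-function. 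Since $S=\{\wp\mid p\}\cup\{\lambda\mid\ff_\chi\}\cup\{v\mid\infty\}$ and $\chi^{-1}\xi$ is ramified at every $\lambda\mid\ff_\chi$ not above $p$ (as $\xi$ is unramified outside $p$), all the Euler factors of $L_S(0,\chi^{-1}\xi)$ at primes in $S$ away from $p$ are trivially $1$; hence the displayed product equals $L_S(0,\chi^{-1}\xi)$, matching the LHS.

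The two sides therefore take the common value $L_S(0,\chi^{-1}\xi)$ at every finite-order $\xi$, and the density argument completes the proof. The main obstacle is entirely notational: keeping straight the three normalizations in play (the involution $\bullet$, the $\chi$-component map $\chi_{_{\LL}}$, and the Tate twist $\textup{Tw}_{\langle\rho_{\textup{cyc}}\rangle}$) and matching them against the Deligne--Ribet interpolation, after which the lemma is essentially a restatement of the classical description of the $p$-adic $L$-function as a limit of $\chi$-isotypic Stickelberger elements along the cyclotomic tower.
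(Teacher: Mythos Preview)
Your proposal is correct and follows essentially the same approach as the paper: both proofs evaluate each side at every finite-order character $\xi$ of $\Gamma$, compute the left side directly from the definition of $\theta_{L_n}$ and the right side from the interpolation property~(\ref{eqn:interpolation}) at $k=1$, and conclude by density. Your version is slightly more explicit about the $L_S$ versus primitive $L$ distinction (noting that the Euler factors at primes $\lambda\mid\ff_\chi$ away from $p$ are already absent since $\chi^{-1}\xi$ is ramified there), a point the paper leaves implicit.
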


\begin{proof}
For every character $\xi$ of $\Gamma$ of finite order, it follows from the definitions that
\begin{align*}
\xi(\chi_{_{\LL}}(\Theta_{L_{\infty}})^\bullet)=\xi^{-1}\left(\chi_{_{\LL}}(\Theta_{L_{\infty}})\right)&=\prod_{\wp|p}(1-\chi^{-1}\xi(\wp))L(0,\chi^{-1}\xi)\\
 &=\langle\rho_{\textup{cyc}}\rangle\xi(\al_{\omega\chi^{-1}})= \xi(\langle\rho_{\textup{cyc}}\rangle\al_{\omega\chi^{-1}}).
\end{align*}
Since this is true for every $\xi$, Lemma follows.
\end{proof}

\begin{cor}
\label{cor:mainequalityinfty}
Under the assumptions of Theorem~\ref{thm:mainequalitykinfty},
$$\textup{char}\left((\varinjlim_n A_{L_n}^\chi)^{\vee}\right)=\textup{Tw}_{\langle\rho_{\textup{cyc}}\rangle}(\al_{\omega\chi^{-1}}).$$
\end{cor}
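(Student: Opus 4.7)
The corollary is essentially a bookkeeping consequence of the two preceding results, so my plan is to simply splice them together. First I would invoke Theorem~\ref{thm:mainequalitykinfty}, whose hypotheses are identical to those of the corollary (we are assuming (A1), (A2), Assumption~\ref{assume:brumer}, and $\pmb{\mu}_p \not\subset L$, together with Wiles' result at $\chi=\omega$ in the borderline case), to obtain the equality
\[
\textup{char}\left((\varinjlim_n A_{L_n}^\chi)^{\vee}\right)= \chi_{_{\LL}}(\Theta_{L_{\infty}})^\bullet.
\]
Then I would apply Lemma~\ref{lem:twistofpadicL}, which identifies $\chi_{_{\LL}}(\Theta_{L_{\infty}})^\bullet$ with $\textup{Tw}_{\langle\rho_{\textup{cyc}}\rangle}(\al_{\omega\chi^{-1}})$, and transitivity of equality delivers the claim.

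Since both ingredients are already in hand, there is no genuine obstacle: Theorem~\ref{thm:mainequalitykinfty} packages the Iwasawa-theoretic Kolyvagin system argument (the bound of Theorem~\ref{thm:mainkinfty} from the $\pmb{\al}$-restricted Euler system $\mathbf{c}^{\textup{St}}$, promoted to an equality via a class-number-formula argument), while Lemma~\ref{lem:twistofpadicL} is a straightforward comparison of interpolation properties at finite-order characters $\xi$ of $\Gamma$. If I wanted a slightly more self-contained presentation, I would briefly recall the interpolation identity
\[
\xi(\chi_{_{\LL}}(\Theta_{L_\infty})^\bullet)=\prod_{\wp\mid p}(1-\chi^{-1}\xi(\wp))\,L(0,\chi^{-1}\xi)
\]
and compare it with the defining interpolation~(\ref{eqn:interpolation}) of $\al_{\omega\chi^{-1}}$ evaluated at $k=1$ and the character $\xi$, noting that the twist operator $\textup{Tw}_{\langle\rho_{\textup{cyc}}\rangle}$ precisely accounts for the factor $\langle\rho_{\textup{cyc}}\rangle\xi$ appearing on the $L$-function side.

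In short, the corollary is a one-line deduction: apply Theorem~\ref{thm:mainequalitykinfty}, then substitute via Lemma~\ref{lem:twistofpadicL}. The only subtlety worth flagging is the hypothesis $\pmb{\mu}_p \not\subset L$ (or the alternative appeal to Wiles' theorem at $\chi=\omega$), inherited from Theorem~\ref{thm:mainequalityk}, which is needed to upgrade the divisibility of Theorem~\ref{thm:mainkinfty} into the equality we use here.
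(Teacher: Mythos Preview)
Your proposal is correct and matches the paper's intended argument exactly: the corollary is stated without proof in the paper, as it follows immediately by combining Theorem~\ref{thm:mainequalitykinfty} with Lemma~\ref{lem:twistofpadicL}, precisely as you describe.
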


\subsection{Twisting and local Iwasawa theory of Stark elements}
\label{subsec:twiststark}
The goal of this section is to establish a connection between the Stickelberger elements and the conjectural Rubin-Stark elements. To achieve this, we will employ the following two ingredients:
\begin{enumerate}
\item The twisting formalism developed in~\cite[\S VI]{r00}.
\item The rigidity statement~\cite[Theorem 2.19(ii)]{kbbiwasawa} for the module of $\LL$-adic Kolyvagin systems.
\end{enumerate}
Write $\rho:= \langle \rho_{\textup{cyc}} \rangle$ for notational simplicity. Let $\psi=\omega\chi^{-1}$ as in the introduction and set $$T^\prime=\ZZ_p(1)\otimes\psi^{-1}=T\otimes\rho.$$
We also write $\mathbb{T}^\prime=T^\prime\otimes\LL$.
\subsubsection{Twisting argument}
\label{subsub:twistingargument}
\begin{lemma}
\label{lem:rubintwisiting}
Suppose $\rho$ is as above.
\begin{enumerate}
\item[(i)] There is a commutative diagram
$$
\xymatrix{
\varprojlim_{n} H^1(k_n,T)\otimes\rho \ar[r]^(.52){\sim}\ar[d]_{\textup{loc}_p}& \varprojlim_{n} H^1(k_n,T^\prime)\ar[d]^{\textup{loc}_p} \\
\varprojlim_{n} H^1((k_n)_p,T)\otimes\rho \ar[r]^(.52){\sim} &\varprojlim_{n} H^1((k_n)_p,T^\prime)
}
$$
such that the horizontal arrows are natural isomorphisms.
\item[(ii)] There is a commutative diagram
$$
\xymatrix{
\varprojlim_{K\in\kk_0} H^1(K,T)\otimes\rho \ar[r]^(.52){\sim}\ar[d]_{\textup{loc}_p}& \varprojlim_{K\in\kk_0} H^1(K,T^\prime)\ar[d]^{\textup{loc}_p} \\
\varprojlim_{K\in\kk_0} H^1(K_p,T)\otimes\rho \ar[r]^(.52){\sim} &\varprojlim_{K\in\kk_0} H^1(K_p,T^\prime)
}
$$
such that the horizontal arrows are natural isomorphisms,
\item[(iii)] $\left(\varinjlim_n H^1_{\FF_{\cl}^*}(k_n,{T}^*)\right)^\vee \otimes\rho \stackrel{\sim}{\lra} \left(\varinjlim_n H^1_{\FF_{\cl}^*}(k_n,(T^\prime)^*)\right)^\vee$.
\end{enumerate}
\end{lemma}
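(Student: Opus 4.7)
The plan is to derive all three statements from Rubin's twisting formalism \cite[\S VI.2]{r00}, applied to the character $\rho=\langle\rho_{\textup{cyc}}\rangle\colon\Gamma\to 1+p\ZZ_p$. The starting point is the $G_k$-module isomorphism $\mathbb{T}\otimes\rho^{-1}\cong\mathbb{T}'$ already displayed in (\ref{eqn:lamdatwist}): on each finite layer $k_n$, sending $t\mapsto t\otimes\rho$ induces a Galois-equivariant isomorphism $H^1(k_n,T\otimes\LL)\otimes\rho\stackrel{\sim}{\to}H^1(k_n,T'\otimes\LL)$, and passing this through Shapiro's lemma gives a natural identification $\varprojlim_n H^1(k_n,T)\otimes\rho\stackrel{\sim}{\to}\varprojlim_n H^1(k_n,T')$. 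The same recipe applied to a decomposition group at $p$ produces the semi-local version $\varprojlim_n H^1((k_n)_p,T)\otimes\rho\stackrel{\sim}{\to}\varprojlim_n H^1((k_n)_p,T')$. Commutativity of the diagram in (i) then follows immediately from the functoriality of Rubin's twist with respect to restriction to a decomposition group, since the localization map $\textup{loc}_p$ is precisely such a restriction.

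For (ii), I would run the identical argument with $\Gamma=\Gal(k_\infty/k)$ replaced by the larger profinite Galois group $\Gal(\kk_0/k)$. The only point to verify is that $\rho$, viewed as a character of $G_k$, factors through a group over which the Iwasawa-theoretic inverse limit makes sense for both cohomology groups; since $\rho$ is a character of $\Gamma$ and the tower $\kk_0$ contains $k_\infty$, this is automatic. The horizontal isomorphism is again assembled from the twist $\mathbb{T}'\otimes_{\ZZ_p[[\Gal(\kk_0/k)]]}-\,\cong\,\mathbb{T}\otimes\rho\otimes-$, and commutativity with $\textup{loc}_p$ is once more functoriality of restriction.

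For (iii), I would combine (i) with the explicit computation of $\varinjlim_n H^1_{\FF_{\cl}^*}(k_n,T^*)$ from Lemma~\ref{lem:iwasawa-selmer-explicit}. The Cartier dual exchanges the roles, so twisting $T$ by $\rho$ twists $T^*$ by $\rho^{-1}$; dualizing via $(-)^\vee=\Hom(-,\QQ_p/\ZZ_p)$ then brings the $\rho$-factor back to the correct side. Concretely, write $\mathbb{A}_\infty:=\varinjlim_n A_{L_n}^\chi$, so the Selmer module on the left is $\mathbb{A}_\infty^\vee$; the analogous computation for $T'$ (which uses exactly the same class-field-theoretic identification since $T$ and $T'$ cut out the same CM field $L$ up to the cyclotomic $\ZZ_p$-extension) identifies the Selmer module on the right with the same $\mathbb{A}_\infty^\vee$ but with the twisted $\LL$-action. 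The resulting isomorphism is precisely the assertion in (iii).

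The only genuine subtlety will be bookkeeping of which side $\rho$ versus $\rho^{-1}$ lands on after each application of Pontryagin duality and Shapiro's lemma; all the ``real'' work is contained in \cite[Theorem VI.2.2]{r00}, and the commutativity of the diagrams is built into that theorem's functoriality.
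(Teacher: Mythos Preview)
Your proposal is correct and follows essentially the same route as the paper: both defer the substance to Rubin's twisting formalism in \cite[\S VI.2]{r00}, with the paper citing Proposition~VI.2.1 directly and noting (via Remark~\ref{rem:compare classical with can}) that the classical Selmer group here matches Rubin's $\mathcal{S}_{\Sigma_p}(k_\infty,W)$. Your treatment of (iii) via the explicit class-group description of Lemma~\ref{lem:iwasawa-selmer-explicit} is a slight variant of the paper's identification step, but it leads to the same place; just be careful that the precise reference is \cite[Proposition~VI.2.1]{r00} rather than Theorem~VI.2.2.
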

\begin{proof}
This is~\cite[Proposition VI.2.1]{r00}. We note that, $\varinjlim_n H^1_{\FF_{\cl}^*}(k_n,{T}^*)$ here coincides with what Rubin calls $\mathcal{S}_{\Sigma_p}(k_\infty,W)$, thanks to Remark~\ref{rem:compare classical with can}, where $W=T^*$ and $\Sigma_p$ is the set of places of $k$ which lie above $p$.
\end{proof}
Let $\pmb{\al} \subset\varprojlim_{K\in \kk_0} H^1(K_p,T)$ be as in \S\ref{def:thick line}, and let $\pmb{\al}_{\rho}$ denote the image of $\pmb{\al}$ under the isomorphism of Lemma~\ref{lem:rubintwisiting}(ii). Recall the $\pmb{\al}$-restricted Euler system $\mathbf{c}^{\textup{St}}$ that we constructed at the end of~\S\ref{subsec:choosehoms}.
Let $\mathbf{c}^{\textup{St},\rho}$ denote the twist of the Euler system $\mathbf{c}^{\textup{St}}$ defined via \cite[Theorem VI.3.5]{r00}; this means $\mathbf{c}^{\textup{St},\rho} \in \ES(T^\prime,\kk)$. Furthermore, one can see without difficulty that the following Lemma is true:
\begin{lemma}
\label{lem:twistedESrestricted}
The twisted Euler system $\mathbf{c}^{\textup{St},\rho}$ is $\pmb{\al}_\rho$-restricted.
\end{lemma}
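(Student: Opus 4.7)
The plan is to exploit the naturality of Rubin's twisting construction with respect to localization, combined with Lemma~\ref{lem:rubintwisiting}(ii), which already establishes that twisting by $\rho$ commutes with localization at $p$ on the level of inverse limits over the tower $\kk_0$. The heart of the matter is to check that ``taking the $K$-th component'' of an inverse system is compatible with the twisting isomorphism $T\otimes\rho \cong T^\prime$, so that the $\pmb{\al}$-restriction property descends through the twist to each finite level.

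First I would unpack the twisting map $\tau_\rho$ of~\cite[Theorem VI.3.5]{r00}. Recall that, since $\rho$ factors through $\Gamma=\Gal(k_\infty/k)$, Rubin's construction produces, for each $K \subset \kk$, a class $c^{\textup{St},\rho}_K \in H^1(K,T^\prime)$ from $c^{\textup{St}}_K \in H^1(K,T)$ via an isomorphism of $G_k$-modules implemented on cohomology. The key observation, which is built into the construction, is that this identification is natural with respect to restriction to decomposition groups at $p$; that is, the square
$$
\xymatrix{
H^1(K,T)\otimes\rho \ar[r]^{\tau_\rho}\ar[d]_{\textup{loc}_p}& H^1(K,T^\prime) \ar[d]^{\textup{loc}_p}\\
H^1(K_p,T)\otimes\rho \ar[r]^{\tau_\rho} & H^1(K_p,T^\prime)
}
$$
commutes for every $K \in \kk$, and is compatible with the corresponding commutative square on the inverse limit in Lemma~\ref{lem:rubintwisiting}(ii).

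Next I would observe that the image $(\pmb{\al}_\rho)_K$ of $\pmb{\al}_\rho$ under the projection from $\varprojlim_{K\in\kk_0}H^1(K_p,T^\prime)$ to $H^1(K_p,T^\prime)$ coincides with the image of $\pmb{\al}_K$ under the local twisting isomorphism $\tau_\rho : H^1(K_p,T)\otimes\rho \stackrel{\sim}{\lra} H^1(K_p,T^\prime)$. This is nothing but a diagram chase: by definition $\pmb{\al}_\rho$ is the image of $\pmb{\al}$ under the bottom horizontal arrow of the inverse-limit diagram in Lemma~\ref{lem:rubintwisiting}(ii), and projection to the $K$-th layer commutes with this isomorphism.

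Combining these two steps, starting with the $\pmb{\al}$-restriction property $\textup{loc}_p(c^{\textup{St}}_K) \in \pmb{\al}_K$ (which is Example~\ref{ex:Lrestricted}(1)) and applying the commutative square above, we obtain $\textup{loc}_p(c^{\textup{St},\rho}_K) = \tau_\rho(\textup{loc}_p(c^{\textup{St}}_K)) \in \tau_\rho(\pmb{\al}_K) = (\pmb{\al}_\rho)_K$, which is exactly the $\pmb{\al}_\rho$-restriction property. The only technical obstacle is the verification of the commutativity of the square at each finite level $K$, but this is essentially immediate from Rubin's construction in~\cite[\S VI]{r00}, where twisting is defined via Galois-equivariant maps that respect restriction to any closed subgroup.
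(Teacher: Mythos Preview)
Your argument is correct and is precisely the verification the paper has in mind. The paper does not actually write out a proof of this lemma: it introduces the statement with the phrase ``one can see without difficulty that the following Lemma is true'' and leaves it at that. Your unpacking of the naturality of Rubin's twisting map with respect to $\textup{loc}_p$, together with the definition of $\pmb{\al}_\rho$ as the image of $\pmb{\al}$ under the bottom isomorphism of Lemma~\ref{lem:rubintwisiting}(ii), is exactly the routine check being suppressed.
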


Let $\all_\rho \in H^1(k_p,\mathbb{T}^\prime)$ denote the image of $\pmb{\al}_\rho$ under the obvious projection map. Recall the element ${c}^{\textup{St}}_{k_\infty}=\{c_{k_n}^{\textup{St}}\}_n \in H^1(k,\mathbb{T})$, and set $c^{\textup{St},\rho}_{k_\infty}=\{c_{k_n}^{\textup{St},\rho}\}_n \in H^1(k,\mathbb{T}^\prime)$. Note that $\textup{loc}_p({c}^{\textup{St}}_{k_\infty}) \in \all \subset H^1(k_p,\mathbb{T})$ and $\textup{loc}_p({c}^{\textup{St},\rho}_{k_\infty}) \in \all_\rho \subset H^1(k_p,\mathbb{T}^\prime)$, by construction. As before, we drop $\textup{loc}_p$ from notation and we denote $\textup{loc}_p({c}^{\textup{St}}_{k_\infty})$ (resp., $\textup{loc}_p({c}^{\textup{St},\rho}_{k_\infty})$) simply by ${c}^{\textup{St}}_{k_\infty}$ (resp., by ${c}^{\textup{St},\rho}_{k_\infty}$). Lemma~\ref{lem:rubintwisiting}(i) induces an isomorphism
\be
\label{eqn:twistedlinesisom}
\all/\LL\cdot {c}^{\textup{St}}_{k_\infty} \otimes\rho \stackrel{\sim}{\lra} \all_\rho/\LL\cdot {c}^{\textup{St},\rho}_{k_\infty}.
\ee
To simplify notation, set $X_\infty(T):=\left(\varinjlim_n H^1_{\FF_{\cl}^*}(k_n,{T}^*)\right)^\vee$ and set similarly $X_\infty(T^\prime):=\left(\varinjlim_n H^1_{\FF_{\cl}^*}(k_n,{(T^\prime)}^*)\right)^\vee$. Until the end of this paper, assume (A1) and (A2) both hold true.
\begin{prop}
\label{prop:charidealcomparison}
Let $\textup{Tw}_\rho:\LL\ra\LL$ be the twisting operator as above.
\begin{enumerate}
\item[(i)]$\textup{Tw}_\rho\left(\textup{char}\left( X_\infty(T^\prime) \right)\right)=\textup{char}\left(X_\infty(T)\right).$
\item[(ii)] $\textup{Tw}_\rho\left(\textup{char}\left(\all_\rho/\LL\cdot {c}^{\textup{St},\rho}_{k_\infty}\right)\right)=\textup{char}\left( \all/\LL\cdot {c}^{\textup{St}}_{k_\infty}\right).$
\item[(iii)] $\textup{char}\left(X_\infty(T^\prime)\right)=\textup{char}\left( \all_\rho/\LL\cdot {c}^{\textup{St},\rho}_{k_\infty}\right).$
\end{enumerate}
\end{prop}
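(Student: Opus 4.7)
The plan is to reduce parts (i) and (ii) to a standard functoriality of characteristic ideals under twisting of $\LL$-modules by characters of $\Gamma$, and then to deduce (iii) formally from (i), (ii), and the equality of characteristic ideals already at our disposal in the untwisted case.

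For (i), I would invoke Lemma~\ref{lem:rubintwisiting}(iii), which provides a canonical $\LL$-module isomorphism $X_\infty(T)\otimes\rho\cong X_\infty(T^\prime)$. Combined with the standard fact that $\textup{char}(M\otimes\rho)=\textup{Tw}_{\rho^{-1}}(\textup{char}(M))$ for any finitely generated torsion $\LL$-module $M$, this yields $\textup{char}(X_\infty(T^\prime))=\textup{Tw}_{\rho^{-1}}(\textup{char}(X_\infty(T)))$, and applying $\textup{Tw}_\rho$ to both sides gives exactly (i). The cited property of characteristic ideals is straightforward: viewing $M\otimes\rho$ as $M$ with its $\LL$-action pre-composed with the ring automorphism $\textup{Tw}_\rho$ of $\LL$, the characteristic ideal of $M\otimes\rho$ is precisely the $\textup{Tw}_{\rho^{-1}}$-image of that of $M$. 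Part (ii) is proved in identical fashion, now starting from the isomorphism (\ref{eqn:twistedlinesisom}) induced by Lemma~\ref{lem:rubintwisiting}(i), which identifies $\all/\LL\cdot c^{\textup{St}}_{k_\infty}\otimes\rho$ with $\all_\rho/\LL\cdot c^{\textup{St},\rho}_{k_\infty}$ as $\LL$-modules.

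For (iii), the starting point is the equality
\[
\textup{char}\left(X_\infty(T)\right)=\textup{char}\left(\all/\LL\cdot c^{\textup{St}}_{k_\infty}\right),
\]
which follows by combining Theorem~\ref{thm:mainequalitykinfty} (yielding $\textup{char}(X_\infty(T))=\chi_{_{\LL}}(\Theta_{L_\infty})^\bullet$) with the explicit identity (\ref{eqn:ESinftyexplicit}); the latter, together with the fact that $\Phi$ is an $\LL$-basis of the free rank-one module $\all$, identifies $\textup{char}(\all/\LL\cdot c^{\textup{St}}_{k_\infty})$ with the principal ideal generated by $\chi_{_{\LL}}(\Theta_{L_\infty})^\bullet$. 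Applying $\textup{Tw}_{\rho^{-1}}$ to both sides of this equality, and using (i) and (ii) to rewrite the two sides as $\textup{char}(X_\infty(T^\prime))$ and $\textup{char}(\all_\rho/\LL\cdot c^{\textup{St},\rho}_{k_\infty})$ respectively, yields (iii).

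I do not anticipate any serious obstacle here; the only subtlety is the careful bookkeeping of whether $\rho$ or $\rho^{-1}$ appears in each twist, so that $\textup{Tw}_\rho$ and $\textup{Tw}_{\rho^{-1}}$ land on the correct side of each identity. Once this is set up, all three parts reduce to formal manipulations with characteristic ideals under the automorphism $\textup{Tw}_\rho$ of $\LL$.
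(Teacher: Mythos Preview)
Your proposal is correct and follows essentially the same route as the paper: parts (i) and (ii) are deduced from Lemma~\ref{lem:rubintwisiting}(iii) and (\ref{eqn:twistedlinesisom}) together with the standard twisting-of-characteristic-ideals fact (which the paper cites as \cite[Lemma VI.1.2(i)]{r00}), and (iii) is obtained by combining Theorem~\ref{thm:mainequalitykinfty} with (\ref{eqn:ESinftyexplicit}) and then applying $\textup{Tw}_{\rho^{-1}}$ via (i) and (ii). Your cautionary remark about tracking $\rho$ versus $\rho^{-1}$ is well taken, and your bookkeeping is correct.
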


\begin{proof}
The assertion (i) follows from Lemma~\ref{lem:rubintwisiting}(iii) together with \cite[Lemma VI.1.2(i)]{r00}, and (ii) from (\ref{eqn:twistedlinesisom}) applied with \cite[Lemma VI.1.2(i)]{r00}.

Thanks to (i) and (ii), the assertion (iii) is equivalent to verifying that
\be
\label{eqn:twistedmainconjstatement}
\textup{Tw}_{\rho^{-1}}\left(\textup{char}\left( X_\infty(T) \right)\right)=\textup{Tw}_{\rho^{-1}}\left(\textup{char}\left(\all/\LL\cdot {c}^{\textup{St}}_{k_\infty}\right)\right).
\ee
This, however, is the statement of Theorem~\ref{thm:mainequalitykinfty} put together with~(\ref{eqn:ESinftyexplicit}), and twisted by $\textup{Tw}_{\rho^{-1}}$.
\end{proof}
\subsubsection{Stickelberger elements vs. Rubin-Stark elements}
Following Definition 2.10 of \cite{kbbiwasawa}, we define the $\all_\rho$-modified Selmer structure $\FF_{\all_\rho}$ on $\mathbb{T}^{\prime}$, for $\all_{\rho}\subset H^1(k_p,\mathbb{T}^\prime)$ as above. The argument of~\cite[Theorem 3.23]{kbbiwasawa} shows that the $\pmb{\al}_\rho$-restricted Euler system $\mathbf{c}^{\textup{St},\rho}$ gives rise to a Kolyvagin system $\pmb{\kappa}^{\textup{St}_\infty,\rho} \in \overline{\KS}(\mathbb{T}^{\prime},\FF_{\all_\rho},\PP)$.
\label{subsub:comparestark}
\begin{prop}
\label{prop:applstikcrhomain} We have
$$\textup{char}\left(H^1_{\FF_{\all_\rho^*}}(k,(\mathbb{T}^{\prime})^*)^\vee\right)=\textup{char}\left(H^1_{\FF_{\all_\rho}}(k,\mathbb{T}^\prime)/\LL\cdot\kappa_1^{\textup{St}_\infty,\rho}\right).$$
\end{prop}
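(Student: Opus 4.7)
The plan is to upgrade the divisibility produced by the Kolyvagin system machinery to an equality by comparing the $\pmb{\al}_\rho$-modified Selmer groups with the classical ones, using crucially the main-conjecture-type equality for $T^\prime$ recorded in Proposition~\ref{prop:charidealcomparison}(iii). First, I would record the identification $\kappa_1^{\textup{St}_\infty,\rho}=c^{\textup{St},\rho}_{k_\infty}$ in $H^1_{\FF_{\all_\rho}}(k,\mathbb{T}^\prime)$ which is built into the construction of $\pmb{\kappa}^{\textup{St}_\infty,\rho}$ (see Theorem~\ref{thm:ESKSmain} and the construction in~\S\ref{subsub:comparestark}); so the right-hand side of the claim is $\textup{char}(H^1_{\FF_{\all_\rho}}(k,\mathbb{T}^\prime)/\LL\cdot c^{\textup{St},\rho}_{k_\infty})$.

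Next, I would apply the twisted analogue of Proposition~\ref{prop:compare selmer over k_infty}(ii) to the representation $\mathbb{T}^\prime$, with distinguished class $c=c^{\textup{St},\rho}_{k_\infty}$. This gives a four-term exact sequence of finitely generated $\LL$-modules
\begin{equation*}
0\lra \frac{H^1_{\FF_{\all_\rho}}(k,\mathbb{T}^\prime)}{\LL\cdot c^{\textup{St},\rho}_{k_\infty}}\lra \frac{\all_\rho}{\LL\cdot c^{\textup{St},\rho}_{k_\infty}}\lra X_\infty(T^\prime)\lra H^1_{\FF_{\all_\rho^*}}(k,(\mathbb{T}^\prime)^*)^\vee\lra 0,
\end{equation*}
where $X_\infty(T^\prime)=(\varinjlim_n H^1_{\FF^*_{\cl}}(k_n,(T^\prime)^*))^\vee$ as in~\S\ref{subsub:twistingargument}. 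Before taking characteristic ideals, I would verify that all four modules are $\LL$-torsion: applying Theorem~\ref{thm:mainapplicationKS} to the Selmer triple $(\mathbb{T}^\prime,\FF_{\all_\rho},\PP)$ with the Kolyvagin system $\pmb{\kappa}^{\textup{St}_\infty,\rho}$ forces $H^1_{\FF_{\all_\rho^*}}(k,(\mathbb{T}^\prime)^*)^\vee$ to be $\LL$-torsion (once we know the quotient on the right is), and the $\LL$-torsion property then propagates through the exact sequence using that $\all_\rho$ is free of rank one.

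Now multiplicativity of characteristic ideals in the exact sequence yields
\begin{equation*}
\textup{char}\!\left(\frac{H^1_{\FF_{\all_\rho}}(k,\mathbb{T}^\prime)}{\LL\cdot c^{\textup{St},\rho}_{k_\infty}}\right)\cdot \textup{char}(X_\infty(T^\prime))=\textup{char}\!\left(\frac{\all_\rho}{\LL\cdot c^{\textup{St},\rho}_{k_\infty}}\right)\cdot\textup{char}\!\left(H^1_{\FF_{\all_\rho^*}}(k,(\mathbb{T}^\prime)^*)^\vee\right).
\end{equation*}
Proposition~\ref{prop:charidealcomparison}(iii) asserts $\textup{char}(X_\infty(T^\prime))=\textup{char}(\all_\rho/\LL\cdot c^{\textup{St},\rho}_{k_\infty})$, which is a nonzero divisor in $\LL$ (being the characteristic ideal of a torsion module arising as a quotient of a free rank-one module by a nonzero element, thanks to the divisibility in Theorem~\ref{thm:mainmodifiedkinfty-2}). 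Cancelling this common factor gives the desired equality.

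The main obstacle is the torsion/nonzero-divisor bookkeeping needed to legitimately cancel $\textup{char}(X_\infty(T^\prime))$ from both sides; this relies on knowing a priori that $c^{\textup{St},\rho}_{k_\infty}$ is a nonzero element of the free rank-one module $\all_\rho$, which follows by twisting the nonvanishing of $c^{\textup{St}}_{k_\infty}=\chi_{_{\LL}}(\Theta_{L_\infty})^\bullet\Phi$ recorded in~(\ref{eqn:ESinftyexplicit}), together with the non-triviality of the Deligne--Ribet $p$-adic $L$-function under hypothesis (A1). Everything else is a formal consequence of the ingredients already assembled in the paper.
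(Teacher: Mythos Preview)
Your proposal is correct and follows essentially the same approach as the paper: the paper's proof simply says the result follows from Proposition~\ref{prop:charidealcomparison} and the exact sequence of Proposition~\ref{prop:compare selmer over k_infty}(ii) rewritten for $\mathbb{T}^\prime$, and you have correctly unpacked this into the multiplicativity argument with the cancellation of $\textup{char}(X_\infty(T^\prime))=\textup{char}(\all_\rho/\LL\cdot c^{\textup{St},\rho}_{k_\infty})$. Your additional bookkeeping on the torsion and nonvanishing issues is a useful expansion of what the paper leaves implicit.
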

\begin{proof}
This follows from Proposition~\ref{prop:charidealcomparison} and the exact sequence of Proposition~\ref{prop:compare selmer over k_infty}(ii) rewritten for $\mathbb{T}^\prime$ instead of $\mathbb{T}$. See also~\cite[Proposition 2.12]{kbbiwasawa} for this version of the exact sequence of Proposition~\ref{prop:compare selmer over k_infty}(ii).
\end{proof}
Assume until the end of this paper that the character $\psi$ satisfies the hypothesis (A1) as well, that is, $\psi(\wp)\neq 1$ for any prime $\wp$ of $k$ lying above $p$. According to~\cite[Theorem 2.19(ii)]{kbbiwasawa}, the $\LL$-module of Kolyvagin systems $\overline{\KS}(\mathbb{T}^{\prime},\FF_{\all_\rho},\PP)$ is free of rank one.
\begin{cor}
\label{cor:KSrhoprimitive}
The Kolyvagin system $\pmb{\kappa}^{\textup{St}_\infty,\rho}$ generates the cyclic module $\overline{\KS}(\mathbb{T}^{\prime},\FF_{\all_\rho},\PP)$.
\end{cor}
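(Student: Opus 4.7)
The plan is to exploit the rank-one rigidity of $\overline{\KS}(\mathbb{T}^{\prime},\FF_{\all_\rho},\PP)$ from \cite{kbbiwasawa}, Theorem 2.19(ii), and then force the coefficient to be a unit by playing the ``equality'' of characteristic ideals from Proposition~\ref{prop:applstikcrhomain} against the general divisibility from (the analog of) Theorem~\ref{thm:mainapplicationKS}. So first I would fix a primitive generator $\pmb{\kappa}^{\textup{prim}}$ of the free, rank-one $\LL$-module $\overline{\KS}(\mathbb{T}^{\prime},\FF_{\all_\rho},\PP)$ and write
$$\pmb{\kappa}^{\textup{St}_\infty,\rho}=f\cdot \pmb{\kappa}^{\textup{prim}},\qquad f\in\LL.$$
The goal is to show $f\in\LL^{\times}$.

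Next I would compare the two ``leading terms''. By the $\mathbb{T}'$-analogue of the exact sequence in Proposition~\ref{prop:compare selmer over k_infty}(i), the module $H^1_{\FF_{\all_\rho}}(k,\mathbb{T}^{\prime})$ injects into the free rank-one $\LL$-module $\all_\rho$, so it is itself torsion-free of rank $\leq 1$. Proposition~\ref{prop:applstikcrhomain} (combined with the fact that the dual Selmer group is $\LL$-torsion, as already used there) forces $\kappa_1^{\textup{St}_\infty,\rho}\neq 0$, and hence also $\kappa_1^{\textup{prim}}\neq 0$; thus $\LL\cdot\kappa_1^{\textup{prim}}\cong\LL$ is a rank-one free submodule, and the tautological short exact sequence
$$0\lra \LL\kappa_1^{\textup{prim}}/\LL f\kappa_1^{\textup{prim}} \lra H^1_{\FF_{\all_\rho}}(k,\mathbb{T}^{\prime})/\LL\kappa_1^{\textup{St}_\infty,\rho}\lra H^1_{\FF_{\all_\rho}}(k,\mathbb{T}^{\prime})/\LL\kappa_1^{\textup{prim}}\lra 0$$
together with multiplicativity of characteristic ideals yields
$$\textup{char}\bigl(H^1_{\FF_{\all_\rho}}(k,\mathbb{T}^{\prime})/\LL\kappa_1^{\textup{St}_\infty,\rho}\bigr)=(f)\cdot \textup{char}\bigl(H^1_{\FF_{\all_\rho}}(k,\mathbb{T}^{\prime})/\LL\kappa_1^{\textup{prim}}\bigr).$$

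For the final step I would apply the general divisibility statement (the $\mathbb{T}'$-version of Theorem~\ref{thm:mainapplicationKS}, which is valid for every Kolyvagin system in $\overline{\KS}(\mathbb{T}^{\prime},\FF_{\all_\rho},\PP)$) to the primitive generator $\pmb{\kappa}^{\textup{prim}}$:
$$\textup{char}\bigl(H^1_{\FF_{\all_\rho}^*}(k,(\mathbb{T}^{\prime})^*)^\vee\bigr)\;\Big|\;\textup{char}\bigl(H^1_{\FF_{\all_\rho}}(k,\mathbb{T}^{\prime})/\LL\kappa_1^{\textup{prim}}\bigr).$$
Invoking Proposition~\ref{prop:applstikcrhomain} to rewrite the left-hand side and substituting the identity above then gives
$$(f)\cdot \textup{char}\bigl(H^1_{\FF_{\all_\rho}}(k,\mathbb{T}^{\prime})/\LL\kappa_1^{\textup{prim}}\bigr)\;\Big|\;\textup{char}\bigl(H^1_{\FF_{\all_\rho}}(k,\mathbb{T}^{\prime})/\LL\kappa_1^{\textup{prim}}\bigr).$$
The characteristic ideal on the right is a nonzero ideal of the domain $\LL$ (its nonvanishing follows from Proposition~\ref{prop:applstikcrhomain} since the right-hand side equals $\textup{char}(X_\infty(T'))\neq 0$), whence $(f)=(1)$ and $f\in\LL^{\times}$, as required.

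The step I expect to be the main obstacle is the verification that $\LL\cdot\kappa_1^{\textup{prim}}$ is indeed free of rank one (so that the multiplicativity of characteristic ideals across the displayed short exact sequence applies cleanly with the factor exactly equal to $(f)$); this rests on checking both that $H^1_{\FF_{\all_\rho}}(k,\mathbb{T}^{\prime})$ is torsion-free (provided by the Poitou--Tate sequence, as in \S\ref{subsubsec:compareselmer2}) and that $\kappa_1^{\textup{prim}}\neq 0$ (which is a non-triviality statement one traces back through Proposition~\ref{prop:applstikcrhomain}, using that its right-hand characteristic ideal is nonzero).
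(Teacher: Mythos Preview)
Your argument is correct and follows essentially the same route as the paper: write $\pmb{\kappa}^{\textup{St}_\infty,\rho}=f\cdot\pmb{\kappa}^{\textup{prim}}$ for a primitive generator, then compare characteristic ideals via Proposition~\ref{prop:applstikcrhomain} to force $f\in\LL^\times$. The only cosmetic difference is that the paper invokes the \emph{equality} $\textup{char}\bigl(H^1_{\FF_{\all_\rho}^*}(k,(\mathbb{T}^{\prime})^*)^\vee\bigr)=\textup{char}\bigl(H^1_{\FF_{\all_\rho}}(k,\mathbb{T}^\prime)/\LL\kappa_1^{\textup{prim}}\bigr)$ directly from \cite[Theorem~2.20]{kbbiwasawa}, whereas you use only the divisibility and recover the same conclusion through your explicit $(f)$-factorization---a slightly more self-contained but equivalent variant.
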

\begin{proof}
We know by~\cite[Theorem 2.19(ii) and Proposition 4.2]{kbbiwasawa} that the free $\LL$-module $\overline{\KS}(\mathbb{T}^{\prime},\FF_{\all_\rho},\PP)$ is generated by a
$\LL$-primitive Kolyvagin system $\pmb{\kappa}$ (in the sense of~\cite[Definition 5.3.9]{mr02}). In particular, we may write $$\pmb{\kappa}^{\textup{St}_\infty,\rho}=u\cdot\pmb{\kappa},$$ with $u \in \LL$.

The main application of the $\LL$-primitive Kolyvagin system $\pmb{\kappa}$ for the Selmer triple $(\mathbb{T}^{\prime},\FF_{\all_\rho},\PP)$ is the following (see~\cite[Theorem 2.20]{kbbiwasawa}):
\be
\label{eqn:mainapplicationprimKS}
\textup{char}\left(H^1_{\FF_{\all_\rho^*}}(k,(\mathbb{T}^{\prime})^*)^\vee\right)=\textup{char}\left(H^1_{\FF_{\all_\rho}}(k,\mathbb{T}^\prime)/\LL\cdot\kappa_1\right).
\ee
The assertion (\ref{eqn:mainapplicationprimKS}) together with Proposition~\ref{prop:applstikcrhomain} shows that $u \in \LL^\times$. This completes the proof.
\end{proof}
Let $\pmb{\frak{e}}^{\textup{Stark}}=\{\frak{e}^{\textup{Stark}}\}_{K\in \kk_0}$ be the Euler system $\left\{\varepsilon_{K,\Phi_0^{(\infty)}}^{\psi}\right\}_{K\in\kk_0}$ defined in~\cite[Proposition 3.14]{kbbiwasawa}. This Euler system is obtained from the  Rubin-Stark elements that Rubin~\cite{ru96} conjectured to exist, and therefore the existence of $\pmb{\frak{e}}^{\textup{Stark}}$ is implicitly assumed\footnote{Having said that, note that we will recover in Theorem~\ref{thm:maincomparison}(i) below the Kolyvagin system $\pmb{\kappa}^{\textup{Stark}}$ up to a unit (which descends from $\pmb{\frak{e}}^{\textup{Stark}}$) directly from the Kolyvagin system $\pmb{\kappa}^{\textup{St}_\infty}$, which is obtained in this article from Stickelberger elements.} here. Note that our $\psi$ here is denoted by $\chi$ in \cite{kbbiwasawa}. Let
$$c^{\textup{Stark}}_{k_\infty} \in \wedge^r H^1(k_p,\mathbb{T}^{\prime})$$
 be the element defined as in~\cite[Remark 4.5]{kbbiwasawa}. Finally, let $\pmb{\kappa}^{\textup{Stark}}$ be the $\LL$-primitive Kolyvagin system for the Selmer triple $(\mathbb{T}^\prime,\FF_{\all_\rho},\PP)$, which is obtained from the Euler system $\pmb{\frak{e}}^{\textup{Stark}}$. We remark that what we call $\all_\rho$ here is denoted by $\all_\infty$ in~\cite{kbbiwasawa}, and $\pmb{\kappa}^{\textup{Stark}}$ here is denoted by $\pmb{\kappa}^{\Phi_0^{(\infty)}}$ in loc.cit. Recall the Kolyvagin system $\pmb{\kappa}^{\textup{St}_\infty,\rho}$ which is obtained from the $\pmb{\al}_{\rho}$-restricted Euler system $\textbf{c}^{\textup{St},\rho}$ of Stickelberger elements.

 The theorem below draws a connection between the Stickelberger elements and the Rubin-Stark elements.

\begin{thm}
\label{thm:maincomparison}
The following holds under the running assumptions:
\begin{itemize}
\item[(i)] There is a unit $u \in \LL^\times$ such that $\pmb{\kappa}^{\textup{St}_\infty,\rho}=u\cdot\pmb{\kappa}^{\textup{Stark}}$,
\item[(ii)] $\textup{char}\left(\all_\rho/\LL\cdot\kappa_1^{\textup{St}_\infty,\rho}\right)= \textup{char}\left(\all_\rho/\kappa_1^{\textup{Stark}}\right)$,
\item[(iii)] $\textup{char} \left(\wedge^r H^1(k_p,\mathbb{T}^\prime)/\LL\cdot c_{k_\infty}^{\textup{Stark}}\right)=\al_{\psi}$.
\end{itemize}
Here, $\al_{\psi}$ is the Deligne-Ribet $p$-adic $L$-function attached to the totally even character $\psi$.
\end{thm}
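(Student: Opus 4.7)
The plan is to deduce the three claims in parallel by working inside the free rank-one $\LL$-module $\overline{\KS}(\mathbb{T}^\prime,\FF_{\all_\rho},\PP)$ and transporting information through the twisting formalism developed in \S\ref{subsub:twistingargument}.

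For Part (i), Corollary~\ref{cor:KSrhoprimitive} already exhibits $\pmb{\kappa}^{\textup{St}_\infty,\rho}$ as a generator of $\overline{\KS}(\mathbb{T}^\prime,\FF_{\all_\rho},\PP)$. The Kolyvagin system $\pmb{\kappa}^{\textup{Stark}}$ is $\LL$-primitive by construction in~\cite{kbbiwasawa} and hence also generates this cyclic free rank-one $\LL$-module by \cite[Theorem 2.19(ii) and Proposition 4.2]{kbbiwasawa}. Two generators of a free module of rank one differ by a unit, producing the required $u\in\LL^\times$. Part (ii) is then immediate: localising at $p$ gives $\kappa_1^{\textup{St}_\infty,\rho}=u\cdot\kappa_1^{\textup{Stark}}$ as elements of $\all_\rho$, and multiplication by a unit preserves the characteristic ideal of the quotient $\all_\rho/\LL\cdot(\,\cdot\,)$.

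For Part (iii) I first compute $\textup{char}(\all_\rho/\LL\cdot\kappa_1^{\textup{St}_\infty,\rho})$. Combining Theorem~\ref{thm:mainequalitykinfty} with Lemma~\ref{lem:twistofpadicL} yields $\textup{char}(X_\infty(T))=\textup{Tw}_{\langle\rho_{\textup{cyc}}\rangle}(\al_\psi)$; Proposition~\ref{prop:charidealcomparison}(i) then gives $\textup{char}(X_\infty(T^\prime))=\al_\psi$, and Proposition~\ref{prop:charidealcomparison}(iii), together with the identification $\kappa_1^{\textup{St}_\infty,\rho}=c_{k_\infty}^{\textup{St},\rho}$ from Theorem~\ref{thm:ESKSmain}(2), delivers the equality
\[
\textup{char}\bigl(\all_\rho/\LL\cdot\kappa_1^{\textup{St}_\infty,\rho}\bigr)=\al_\psi.
\]
Invoking Part (ii) replaces $\kappa_1^{\textup{St}_\infty,\rho}$ by $\kappa_1^{\textup{Stark}}$. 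To transfer this equality from $\all_\rho$ to $\wedge^r H^1(k_p,\mathbb{T}^\prime)$ I appeal to the construction of $\pmb{\kappa}^{\textup{Stark}}$ in~\cite[\S3]{kbbiwasawa}: the local class $\textup{loc}_p(\kappa_1^{\textup{Stark}})$ is obtained by contracting the Rubin-Stark element $c_{k_\infty}^{\textup{Stark}}\in\wedge^r H^1(k_p,\mathbb{T}^\prime)$ against the collection $\Phi_0^{(\infty)}\in\wedge^{r-1}\Hom(H^1(k_p,\mathbb{T}^\prime),\LL)$. By Lemma~\ref{lemma:free for k_infty} the $\LL$-module $H^1(k_p,\mathbb{T}^\prime)$ is free of rank $r$, so both $\wedge^r H^1(k_p,\mathbb{T}^\prime)$ and $\all_\rho$ are $\LL$-free of rank one, and the $\Phi_0^{(\infty)}$ of~\cite{kbbiwasawa} is chosen to consist of $r-1$ members of an $\LL$-basis of $\Hom(H^1(k_p,\mathbb{T}^\prime),\LL)$ dual to a decomposition in which $\all_\rho$ appears as a direct summand; contraction is therefore an $\LL$-linear isomorphism $\wedge^r H^1(k_p,\mathbb{T}^\prime)\stackrel{\sim}{\lra}\all_\rho$ carrying $c_{k_\infty}^{\textup{Stark}}$ to $\textup{loc}_p(\kappa_1^{\textup{Stark}})$. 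Part (iii) follows.

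The main obstacle is the last step, i.e.\ the compatibility between $\Phi_0^{(\infty)}$ and the rank-one summand $\all_\rho$ singled out by the $\pmb{\al}_\rho$-restricted framework. The key point is that we have complete freedom in the choice of $\pmb{\al}\subset\varprojlim_{n,\tau}H^1(k_n(\tau)_p,T)$ by Corollary~\ref{cor:thick free} and Proposition~\ref{prop:semilocalstructure}; exploiting this freedom we may arrange $\pmb{\al}_\rho$ to be precisely the rank-one summand cut out by $\Phi_0^{(\infty)}$, after which the displayed isomorphism above is a purely formal consequence of the decomposition. Everything else in the argument is a bookkeeping exercise in the rigidity of $\overline{\KS}(\mathbb{T}^\prime,\FF_{\all_\rho},\PP)$ and the twisting formalism of Lemma~\ref{lem:rubintwisiting}.
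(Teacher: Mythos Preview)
Your proof is correct and follows essentially the same route as the paper. The only cosmetic difference is that in Part~(iii) you pass through $X_\infty(T^\prime)$ via Proposition~\ref{prop:charidealcomparison}(i) and~(iii), whereas the paper applies Proposition~\ref{prop:charidealcomparison}(ii) together with~(\ref{eqn:ESinftyexplicit}) and Lemma~\ref{lem:twistofpadicL} directly; since (iii) of that Proposition is proved from (i) and (ii), the two computations are equivalent. Your explicit discussion of the compatibility between $\all_\rho$ and the line determined by $\Phi_0^{(\infty)}$ is handled in the paper simply by the identification ``what we call $\all_\rho$ here is denoted by $\all_\infty$ in~\cite{kbbiwasawa}'' preceding the theorem, so you have merely made transparent a choice the paper leaves implicit.
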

\begin{proof}
The assertion (i) follows from Corollary~\ref{cor:KSrhoprimitive} and the fact that $\pmb{\kappa}^{\textup{Stark}}$ generates the free rank-one $\LL$-module $\overline{\KS}(\mathbb{T}^\prime,\FF_{\all_\rho},\PP)$. The assertion (ii) is immediate from (i).

The discussion preceding the statement of~\cite[Corollary 4.6]{kbbiwasawa} shows that
$$\wedge^r H^1(k_p,\mathbb{T}^\prime)/\LL\cdot c_{k_\infty}^{\textup{Stark}} \cong \all_\rho/\LL\cdot \kappa_1^{\textup{Stark}}.$$
Hence, it follows from (ii) that
$$\textup{char} \left(\wedge^r H^1(k_p,\mathbb{T}^\prime)/\LL\cdot c_{k_\infty}^{\textup{Stark}}\right)=\textup{char}\left(\all_\rho/\LL\cdot\kappa_1^{\textup{St}_\infty,\rho}\right).$$
By the construction of the Kolyvagin system $\pmb{\kappa}^{\textup{St}_{\infty},\rho}$  out of the Euler system $\textbf{c}^{\textup{St},\rho}$, it follows that $\kappa_1^{\textup{St}_\infty,\rho}=c_{k_\infty}^{\textup{St},\rho}$, which in return implies that
$$\textup{char} \left(\wedge^r H^1(k_p,\mathbb{T}^\prime)/\LL\cdot c_{k_\infty}^{\textup{Stark}}\right)=\textup{char}\left(\all_\rho/\LL\cdot c_{k_\infty}^{\textup{St},\rho}\right).$$
Using Proposition~\ref{prop:charidealcomparison}(ii), Equation (\ref{eqn:ESinftyexplicit}) and Lemma~\ref{lem:twistofpadicL}, we see that $$\textup{Tw}_{\rho}\left(\textup{char}\left(\all_\rho/\LL\cdot c_{k_\infty}^{\textup{St},\rho}\right)\right)=\textup{char}\left(\all/\LL\cdot c_{k_\infty}^{\textup{St}}\right)=\textup{Tw}_{\rho}(\al_\psi).$$
This completes the proof of the theorem.
\end{proof}
The author~\cite[Theorem 4.7]{kbbiwasawa} has previously deduced Theorem~\ref{thm:maincomparison}(iii) from Iwasawa's main conjecture. Here, we need to assume slightly less, namely, the $\chi$-part of the Brumer's conjecture (Assumption~\ref{assume:brumer}) to prove this statement.
\\\\\textit{Acknowledgements}. {The author was supported by the William Hodge
post-doctoral fellowship of IH\'ES when this paper was written up.
The author wishes to extend his hearty thanks to IH\'ES for their warm
hospitality. He also thanks Masato Kurihara for his interest in our work and for an encouraging discussion on the results of this article when the author visited Keio; and the anonymous referee for numerous comments and suggestions to improve the exposition.
The author was partially supported by the Marie Curie grant EU-FP7-IRG 230668  the TUBITAK-Career grant 109T662.}
%%%%%%%%%%%%%%%%%%%%%%%%%%%%%%%%%%%%%%%%%%%%%%%%

%\bibliographystyle{cdraifplain}
\bibliographystyle{halpha}
\bibliography{references}
\end{document}